\newcommand{\ds}{\displaystyle}
\newcommand{\tensor}{\otimes}
\newcommand{\op}{\mathcal}
\newcommand{\cdc}{,\dots,}
\newcommand{\mxdops}{\op{O}ps^{\mathsf{MxCpx}}}
\newcommand{\rotops}{\op{O}ps^{\mathsf{Rot}}}
\newcommand{\lhc}{\mathsf{CC}}
\newcommand{\nhc}{\mathsf{CC^-}}
\newcommand{\phc}{\mathsf{CC^\text{per}}}
\newcommand{\hlhc}{\mathsf{HC}}
\newcommand{\hnhc}{\mathsf{HC^-}}
\newcommand{\hphc}{\mathsf{HC^\text{per}}}
\newcommand{\tcc}{\mathsf{CC}^\theta}
\newcommand{\thc}{\mathsf{HC}^\theta}
\tikzset{ext/.style={circle, draw,inner sep=1pt},int/.style={circle,draw,fill,inner sep=1pt},nil/.style={inner sep=1pt}}
\tikzset{exte/.style={circle, draw,inner sep=1pt},inte/.style={circle,draw,fill,inner sep=3pt}}
\tikzset{diagram/.style={matrix of math nodes, row sep=1.5em, column sep=0.5em, text height=1.5ex, text depth=0.25ex}}
\tikzset{diagram2/.style={matrix of math nodes, row sep=0.5em, column sep=0.5em, text height=1.5ex, text depth=0.25ex}}
\numberwithin{equation}{section}
\newtheorem{theorem}{Theorem}[section]
\theoremstyle{plain}
\newtheorem*{nonumbertheoremB}{Theorem B}
\newtheorem*{nonumbertheoremA}{Theorem A}
\newtheorem{corollary}[theorem]{Corollary}
\newtheorem{lemma}[theorem]{Lemma}
\newtheorem{proposition}[theorem]{Proposition}
\newtheorem{propdef}[theorem]{Definition/Proposition}
\theoremstyle{definition}
\newtheorem{definition}[theorem]{Definition}
\newtheorem{example}[theorem]{Example}
\newtheorem{const}[theorem]{Construction}
\newtheorem{remark}[theorem]{Remark}
\newcommand{\cP}{{\mathcal{P}}}
\newcommand{\cQ}{{\mathcal{Q}}}
\newcommand{\cM}{{{\mathcal M}}}
\newcommand{\BV}{{\mathsf{BV}}}
\newcommand{\BVKGra}{{\mathsf{BVKGra}}}
\newcommand{\BVKGraphs}{{\mathsf{BVKGraphs}}}
\newcommand{\Gra}{{\mathsf{Gra}}}
\newcommand{\Graphs}{{\mathsf{Graphs}}}
\newcommand{\Ger}{{\mathsf{Ger}}}
\newcommand{\Lie}{{\mathsf{Lie}}}
\newcommand{\sLie}{{\mathfrak{s}^{-1}\mathsf{Lie}}}
\newcommand{\FM}{{\mathsf{FM}}}
\newcommand{\vKGra}{{\mathsf{vKGra}}}
\newcommand{\vKGraphs}{{\mathsf{vKGraphs}}}
\newcommand{\Tpoly}{T_{\rm poly}}
\newcommand{\Dpoly}{D_{\rm poly}}
\newcommand{\TDpoly}{\tilde{D}_{\rm poly}} 
\newcommand{\OpDpoly}{\mathcal{D}_{\rm poly}} 
\newcommand{\Rformal}{\mathbb R^d_{\rm{formal}}}
\newcommand{\Grav}{{\mathsf{Grav}}}
\newcommand{\Chains}{\mathsf{Chains}_\ast}
\newcommand{\Chain}{\mathsf{Chains}}  %no astesrisk version
\renewcommand{\H}{\mathbb H}
\DeclareMathOperator{\vol}{vol}
\DeclareMathOperator{\End}{End}
\DeclareMathOperator{\Hom}{Hom}
\DeclareMathOperator{\Tw}{\mathsf{Tw}}
\DeclareMathOperator{\Div}{div_\omega}
\newcommand{\notadp}
{{
		\begin{tikzpicture}[baseline=-.55ex,scale=.35, every loop/.style={}]
		\node[circle,draw,fill,inner sep=.5pt] (a) at (0,0) {};
		\draw (a) edge[loop] (a);
		\draw (-.2,-.2) -- (.2,.5);
		\end{tikzpicture}}}
\newcommand{\tadp}
{{
		\begin{tikzpicture}[baseline=-.75ex,scale=.8, every loop/.style={}]
		\node[circle,draw,fill,inner sep=.7pt] (a) at (0,0) {};
		\draw (a) edge[loop] (a);
		\end{tikzpicture}}}
\begin{document}

\title{Gravity Formality}
\author{Ricardo Campos} \author{Benjamin C. Ward}

%\subjclass{Primary , ?; Secondary ?, ?} 
%\keywords{Keyword one, keyword two etc.}

\begin{abstract}
    We show that Willwacher's cyclic formality theorem can be extended to preserve natural Gravity operations on cyclic multivector fields and cyclic multidifferential operators. We express this in terms of a homotopy Gravity quasi-isomorphism with explicit local formulas. For this, we develop operadic tools related to mixed complexes and cyclic homology and prove that the operad $\mathsf M_\circlearrowleft$ of natural operations on cyclic operators is formal and hence quasi-isomorphic to the Gravity operad.
\end{abstract}

%\begin{abstract}
%We show that Willwacher's cyclic formality theorem can be extended to preserve natural Gravity operations on cyclic multivector fields and cyclic multidifferential operators. We express this in terms of a homotopy Gravity quasi-isomorphism with explicit local formulas. For this, we develop operadic tools related to mixed complexes and cyclic homology and prove an equivariant version of Deligne's conjecture by showing that the operad $\mathsf M_\circlearrowleft$ is quasi-isomorphic to the Gravity operad.
%\end{abstract}
\maketitle

\tableofcontents
\section*{Introduction} 

The original deformation quantization problem aims to obtain a formal deformation of the associative product of functions of a Poisson manifold $M$, called a \textit{star product}. The space governing such deformations is essentially the Lie algebra of multidifferential operators $\Dpoly$, the smooth version of the Hochschild complex of the algebra $C^\infty(M)$. In his celebrated paper \cite{Kontsevich}, Kontsevich showed that the Lie algebra $\Dpoly$ is formal i.e., it is quasi-isomorphic to its homology, the Lie algebra of multivector fields $\Tpoly$. His proof involves the construction of the ``formality morphism'', a homotopy quasi-isomorphism of Lie algebras
$$\mathcal U \colon \Tpoly \to \Dpoly,$$
with explicit local formulas depending on integrals over configurations of points and expressed in terms of graphs. This results solves the deformation quantization problem by establishing a correspondence between formal Poisson structures and star products (bijective up to gauge equivalence). 

Kontsevich's result, however, ignores the richer structures existent on $\Tpoly$ and $\Dpoly$. Let now $M$ be an oriented $D$-dimensional manifold with a fixed volume form $\omega$. The pull back of the de Rham differential via contraction with $\omega$ endows the space $\Tpoly$ with the structure of a $\BV$ algebra.  On the other hand, 
there is a natural action of the cyclic group of order $n+1$ on $\Dpoly^n$ given by ``integration by parts'' which, after the cyclic Deligne's conjecture (see \cite{CycDel}), induces a natural $\BV_\infty$ algebra structure on $\Dpoly$. The natural question to ask is whether Kontsevich's formality morphism can be extended to a $\BV_\infty$ quasi-isomorphism. Tamarkin \cite{Tamarkin,Hinich} constructed a non-explicit $\Ger_\infty$ (homotopy Gerstenhaber) quasi-isomorphism $\Tpoly \to \Dpoly$ depending on a solution of Deligne's conjecture whose underlying $\Lie_\infty$ morphism was later shown by Willwacher \cite{homotopybracesformality} to be homotopy equivalent to Kontsevich's map if one uses the Alekseev-Torossian associator to construct a solution to Deligne's conjecture. Furthermore, Willwacher shows that the original formality morphism can be strictly extended to a $\Ger_\infty$ morphism. The full extension to the BV setting was given by the first author \cite{Campos} who constructed a $\BV_\infty$ quasi-isomorphism $\Tpoly \to \Dpoly$ with explicit local formulas depending on integrals over configurations of framed points.  One advantage of incorporating these richer structures into the discussion is that we may now view the algebraic operations as being parametrized by geometric objects, namely by the moduli spaces of genus zero surfaces with parametrized boundary components. \\

The subspace of cyclic invariants of $\Dpoly$, denoted by $\Dpoly^\sigma : = \bigoplus_{n\geq 0} (\Dpoly^n)^{\mathbb{Z}_{n+1}}$, is preserved by the Lie bracket and the Hochschild differential. The differential graded Lie algebra $\Dpoly^\sigma$ is associated to a different deformation problem, namely the construction of \textit{closed star products}. This led to the conjecture of an analogous formality statement, the ``cyclic formality conjecture'' \cite{Shoikhet}.
Let $\Div\colon \Tpoly^\bullet \to \Tpoly^{\bullet -1}$ be the divergence operator on the space of multivector fields. In \cite{WC} Willwacher gave an affirmative answer to the cyclic formality conjecture by constructing a homotopy Lie quasi-isomorphism
$$ \mathcal U^{cyc} \colon (\Tpoly[u], u\Div)\to (\Dpoly^\sigma, d_{Hoch}).$$

As in the non-cyclic case, both of these objects have structures richer than just Lie algebras.  Namely, viewing these objects as models for cyclic invariants associated to the non-cyclic case above, it will be possible to show that they each have operations parametrized by models of the moduli spaces $\op{M}_{\ast}$ of genus zero surfaces with {\it unparametrized} boundary components. 

To make this precise we use the presentation of the Gravity operad, introduced by Getzler in \cite{Geteq}.  The graded vector spaces $\Sigma H_\ast(\op{M}_{n+1})$ form an operad $\mathsf{Grav}$ which injects into $\mathsf{Ger}$, which is generated operadically by the classes of points in $H_0(\op{M}_{n+1})$ (ranging over $n\geq 2$), and whose sub-operad of top degree homology $\Sigma H_{n-2}(\op{M}_{n+1})$ is isomorphic to the suspension of the Lie operad $\mathfrak{s}\mathsf{Lie}$.  In particular every gravity algebra is a (shifted) Lie algebra.

Both spaces $(\Tpoly[u], u\Div)$ and $H(\Dpoly^\sigma)$ are naturally gravity algebras with first bracket equal to the usual Lie bracket. The natural question to ask is then whether Willwacher's homotopy Lie quasi-isomorphism can be extended to the Gravity setting, as conjectured in \cite{Ward2}. However, before attempting to answer this question one must find a $\Grav_\infty$ structure on $\Dpoly$ inducing the Gravity structure in homology, which is in some sense a dual version of the cyclic Deligne's conjecture.

In \cite{Ward2} the second author constructed the operad $\mathsf M_\circlearrowright$, a cyclic variation of the braces/minimal operad $\mathsf M$ that acts naturally on spaces of cyclic invariants such as $\Dpoly^\sigma$, and whose homology is $\mathsf{Grav}$. Our first result shows that the dg operad $\mathsf M_\circlearrowright$ is formal.

\begin{nonumbertheoremA}
	The operad $\mathsf M_\circlearrowright$ is quasi-isomorphic to $\Grav$.
\end{nonumbertheoremA}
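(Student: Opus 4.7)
The plan is to deduce the formality of $\mathsf{M}_\circlearrowright$ from the classical formality of the minimal/brace operad $\mathsf{M}$ (equivalent to $\Ger$ via the Tamarkin--Kontsevich formality of the little $2$-disks and the non-cyclic Deligne conjecture) by transporting it through the operadic framework of $\mxdops$ and $\rotops$ developed in the body of the paper. Since $\mathsf{M}_\circlearrowright$ is by design a cyclic/rotational enhancement of $\mathsf{M}$ acting on cyclic invariants, the natural strategy is to exhibit both $\mathsf{M}_\circlearrowright$ and $\Grav$ as outputs of a common functorial construction $R$ (internal to the $\rotops$ framework) applied to $\mathsf{M}$ and $\Ger$ respectively, and then to show that $R$ preserves quasi-isomorphisms.

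\medskip

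Concretely, I would proceed in four steps. First, extract the construction $R$ from the paper's operadic tools; morally, $R$ assigns to an operad acting on mixed complexes the corresponding operad of operations on rotational/cyclic invariants, tracked with full operadic coherence. Second, verify $R(\mathsf{M}) \simeq \mathsf{M}_\circlearrowright$, which should be essentially definitional given that $\mathsf{M}_\circlearrowright$ is defined as natural operations on cyclic operators, and verify $R(\Ger) \simeq \Grav$, recovering Getzler's presentation of the Gravity operad via rotational averaging of Gerstenhaber products on $\mathcal{M}_{0,n+1}$. Third, establish homotopy invariance of $R$: in characteristic zero the cyclic invariants functor is exact by Maschke averaging, so the real content lies in verifying compatibility with operadic composition, which can be controlled via a spectral sequence associated to the filtration by rotational arity. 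Fourth, apply $R$ termwise to a zig-zag of quasi-isomorphisms $\mathsf{M} \xleftarrow{\sim} Q \xrightarrow{\sim} \Ger$ witnessing the formality of $\mathsf{M}$ to obtain the desired formality zig-zag for $\mathsf{M}_\circlearrowright$.

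\medskip

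The principal obstacle is the homotopy invariance of $R$. Although exactness of invariants under a finite cyclic group in characteristic zero is automatic, its interaction with the operadic composition maps along rotationally-marked inputs is delicate, and one must verify that $R$ descends to a functor between the homotopy categories of operads, not merely between chain complexes. A secondary difficulty is that $\Grav$ is generated in every arity $\geq 2$, so any explicit matching of generators cannot proceed solely through the arity-two Gerstenhaber bracket and must account for the higher brackets produced by rotational averaging of iterated compositions. If this route runs into trouble, an alternative path would be to identify $\mathsf{M}_\circlearrowright$ (up to suspension) with a chain model of the moduli spaces $\mathcal{M}_{0,n+1}$ and invoke Hodge-theoretic formality of smooth complex varieties.
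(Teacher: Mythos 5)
Your overall architecture --- exhibit $\mathsf{M}_\circlearrowright$ and $\Grav$ as values of a single homotopy-invariant functor applied to $\mathsf{M}$ and $\Ger$ respectively, and then push a formality zig-zag through that functor --- is exactly the strategy of the paper. The genuine gap is in how you propose to build the functor and prove its homotopy invariance. The relevant structure on $\mathsf{M}$ and on $\Ger$ is not an action of a finite cyclic group but a square-zero degree $-1$ operator $R$ (a mixed-complex/rotational structure: $R$ moves the root on $\mathsf{M}$, and $R=\{\delta_{\BV},-\}$ on $\Ger$), and the target objects are $\mathsf{M}_\circlearrowright=\mathrm{Im}(R)$ and $\Grav=\ker(R)\cong\mathrm{Im}(R)$. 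Maschke averaging is therefore not available, and $\ker$ or $\mathrm{Im}$ of such an operator is \emph{not} homotopy invariant in general; this is precisely the obstacle your step three must overcome and your proposed mechanism does not. The paper's resolution is to replace $\ker(R)$ by a construction that \emph{is} homotopy invariant --- the levelwise (negative) cyclic homology functors $\tcc=\lhc\circ\theta$ and $\nhc$ --- and then to prove (Lemma \ref{morlem2}, Corollary \ref{wecor}) that the comparison maps $\tcc(\op{O})\to(\ker\Delta,d)\to\nhc(\op{O})$ are quasi-isomorphisms \emph{under the hypothesis that $[\Delta]$ acts exactly on $H(\op{O})$}. That exactness for $\Ger$ is Getzler's theorem, not a formal consequence of characteristic zero; without it the whole reduction fails.

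A second substantive point: the zig-zag $\mathsf{M}\leftarrow Q\rightarrow\Ger$ you want to feed into the functor must consist of maps commuting with the rotation operators at every stage, and ordinary formality of $\mathsf{M}$ does not supply this. The paper needs two further nontrivial inputs: the equivalence $(\mathsf{M},d,R)\sim(S_\ast(\op{D}_2),d,\Delta)$ as \emph{rotational} operads, proved via the cacti operad and a bar--cobar argument (Proposition \ref{eqprop}), and the compatibility of Kontsevich's morphism $\Chains(\FM_2)\to\Graphs$ with the mixed-complex structure after Giansiracusa--Salvatore (Theorem \ref{prop:formality morphism}), which is also why the argument only works directly over $\mathbb{R}$. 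Finally, $R(\mathsf{M})\simeq\mathsf{M}_\circlearrowright$ is not definitional: the paper constructs an explicit operad map $\tcc(\mathsf{M})\to\mathrm{Im}(R)=\mathsf{M}_\circlearrowright$ sending $a\otimes v^k$ to $R(a)$ for $k=0$ and to $0$ otherwise, and verifies it is a quasi-isomorphism using the prior computation $H(\mathsf{M}_\circlearrowright)\cong\Grav$ and the fact that generators map to generators. Your fallback route via a chain model of the genus zero moduli spaces and formality of $\Grav$ is viable and is essentially the workaround the paper itself suggests for ground fields other than $\mathbb{R}$.
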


The proof of this theorem combines three ingredients:  formality of the framed little disks after \cite{GS10}, the homology calculations of \cite{Ward2}, and the theory of cyclic homology of operads valued in mixed complexes.  This final ingredient is developed in section $\ref{secmixedcpx}$ and should be of independent interest.

From Theorem A we obtain a $\Grav_\infty$ structure on $\Dpoly$ after picking a homotopy lift $\Grav_\infty \stackrel{\sim}\to \mathsf M_\circlearrowright$. Having this $\Grav_\infty$ structure on $\Dpoly$ we can formulate the main result of this paper.

\begin{nonumbertheoremB}
Let $M$ be an oriented smooth manifold with a fixed volume form $\omega$. 
	There is a $\Grav_\infty$ quasi-isomorphism $(\Tpoly[u],u\Div)\to (\Dpoly^\sigma,d_{Hoch})$ extending Willwacher's $\mathsf{Lie}_\infty$ quasi-isomorphism.
\end{nonumbertheoremB}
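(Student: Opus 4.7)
The plan is to combine Theorem A with Willwacher's cyclic $\Lie_\infty$ quasi-isomorphism $\mathcal U^{cyc}$ and lift the latter to the full Gravity setting. The argument splits into equipping both sides with compatible $\Grav_\infty$ structures and then constructing the higher Taylor components of the morphism. First I would fix a cofibrant resolution $\Grav_\infty \xrightarrow{\sim} \Grav$ and compose with the zig-zag of quasi-isomorphisms supplied by Theorem A to obtain a map $\Grav_\infty \to \mathsf M_\circlearrowright$. Via the natural action of $\mathsf M_\circlearrowright$ on $\Dpoly^\sigma$, this endows $(\Dpoly^\sigma, d_{Hoch})$ with a $\Grav_\infty$ algebra structure extending its underlying Lie algebra structure. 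On the source, $(\Tpoly[u], u\Div)$ carries its natural Gravity algebra structure coming from the $\BV$ algebra $\Tpoly$ viewed as a mixed complex in the sense of section $\ref{secmixedcpx}$, with $u$ being the standard generator of negative cyclic chains.

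With both sides equipped with compatible $\Grav_\infty$ structures, the extension can be produced by one of two complementary routes. The explicit approach, modelled on Campos's $\BV_\infty$ extension of Kontsevich's formality \cite{Campos}, writes the higher Taylor coefficients $\mathcal U^{cyc}_{k_1,\dots,k_r}$ as integrals over compactifications of configurations of framed points distributed on $r$ disks whose boundaries are parametrized only up to rotation, reflecting the unparametrized boundary components of the moduli spaces $\op M_\ast$. The $\Grav_\infty$ relations are then verified via Stokes' theorem on these compactifications, the boundary strata being matched against the composition structure of the Gravity operad on both sides. The abstract alternative, modelled on Willwacher's extension of Kontsevich formality to the $\Ger_\infty$ level, frames the $\Grav_\infty$ extensions of $\mathcal U^{cyc}$ as Maurer--Cartan elements in the appropriate convolution deformation complex and constructs them inductively, starting from the $\Lie_\infty$ morphism $\mathcal U^{cyc}$ and successively adjoining Taylor components for the higher Gravity brackets.

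The main obstacle will be controlling the obstructions in the higher Gravity components. Unlike the $\Lie_\infty$ case, the Gravity relations couple Taylor coefficients of different arities through iterated brackets, and the mixed complex structure of the source introduces boundary contributions intertwining the divergence operator with collisions of configuration points. In the explicit approach these manifest as codimension-one strata involving rotation of boundary framings and degeneration of the formal parameter $u$; in the abstract approach they become graph cohomology classes in an associated deformation complex. The proof reduces to showing that the relevant obstruction cohomology vanishes, which I expect to follow from Theorem A together with the mixed-complex machinery of section $\ref{secmixedcpx}$, using that the zig-zag $\Grav \xleftarrow{\sim} \cdot \xrightarrow{\sim} \mathsf M_\circlearrowright$ identifies the cyclic graph cohomology controlling obstructions with that of the gravity operad, where the relevant classes vanish by Willwacher's graph cohomology arguments already underlying his $\Lie_\infty$ quasi-isomorphism.
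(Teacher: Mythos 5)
Your setup of the two $\Grav_\infty$ structures matches the paper exactly (the action of $\mathsf M_\circlearrowright$ on $\Dpoly^\sigma$ via a lift $\Grav_\infty\stackrel{\sim}\to\mathsf M_\circlearrowright$ from Theorem A, and the gravity structure on $(\Tpoly[u],u\Div)$ from the BV/mixed-complex machinery). But the core of your argument — actually producing the higher Taylor components — is left to one of two routes, neither of which is carried out, and the second of which contains an unsubstantiated claim. The paper does \emph{not} proceed by obstruction theory. It packages the entire morphism as a composite of explicit operadic bimodule maps
\begin{equation*}
\mathsf M_\circlearrowright \circlearrowright \ker\Delta_{\mathbb H} \circlearrowleft \tcc(\Chains(\FM_2)) \longrightarrow \cdots \longrightarrow \End_{\Dpoly^\sigma}\circlearrowright \End^{\Tpoly[u]}_{\Dpoly^\sigma}\circlearrowleft \End_{\Tpoly[u]},
\end{equation*}
passing through $\Chains(\mathbb H_{\bullet,n})$, its cyclic invariants, and the graph bimodule $\vKGra$; the only homotopy-lifting step is handled once and for all by the quasi-torsor theorem of \cite{torsors}, whose hypotheses are verified by computing $H(\ker\Delta_{\mathbb H})=\Grav$ and checking that generators map to generators. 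This replaces any inductive obstruction analysis. Your alternative route (b) asserts that the obstruction cohomology for a $\Grav_\infty$ morphism ``vanishes by Willwacher's graph cohomology arguments''; this is not something Willwacher's cyclic formality paper provides — the convolution complex controlling $\Grav_\infty$ morphisms is governed by the (much larger) Koszul dual of $\Grav$, not by the $\sLie_\infty$ deformation complex he computes — so as written this step is a hope, not an argument. Your route (a) is closer in spirit to the paper, but the geometric model is off: the relevant compactifications are the upper-half-plane configuration spaces $\mathbb H_{m,n}$ with the cyclic $\mathbb Z_{n+1}$ action permuting boundary points with the point at infinity (and the forms $d\phi^{i,j}$, $\eta_z$ built from hyperbolic angles), not framed points on disks with boundaries parametrized up to rotation; and verifying the $\Grav_\infty$ relations directly by Stokes, rather than verifying that each arrow is a bimodule map of Cyclic Swiss Cheese type, is a substantially harder bookkeeping problem that you have not reduced to anything checkable.

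A second genuine gap: the statement is for an arbitrary oriented manifold $M$ with volume form, but your proposal only ever addresses the local ($\mathbb R^D$) construction. The paper devotes Section \ref{sec5} to globalization via Fedosov resolutions, and this is not routine here: one must make the $\Grav_\infty$ morphism compatible with twisting by the Maurer--Cartan element $B$ (which requires factoring through a sub-quotient bimodule of graphs avoiding the ill-defined linear part of $B$, and dealing with the fact that $\tcc$ does not interact well with operadic twisting, forcing a detour through $\nhc$). None of this appears in your outline, so even granting the local statement, the theorem as stated is not reached.
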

In particular, the first component is the (cyclic \cite{Shoikhet}) HKR map. In the $M=\mathbb R^D$ case this formula admits an explicit expression in terms of integrals over configuration spaces in the upper half plane, parametrized by graphs, similar to the original paper from Kontsevich.

We emphasize the paradigm when considering formality-like theorems, that the natural structure on $\Dpoly^\sigma$ is not that of a $\Grav_\infty$ algebra but rather that of a $\mathsf M_\circlearrowright$ algebra, the same way that the natural structure on $\Dpoly$ is not the one of a $\Ger_\infty$/$\mathsf{BV}_\infty$ algebra but rather the Braces/Cyclic Braces structure. For this reason, operadic tools and concretely the language of operadic bimodules are a neat way to work simultaneously with the $\mathsf{M}_\circlearrowright$ algebra structure on $\Dpoly^\sigma$ and the Gravity algebra structure on $\Tpoly$.

\subsection*{Organization}  This paper is organized as follows.  We begin in section $\ref{secmixedcpx}$ by studying the interaction of operads, mixed complexes, and cyclic homology.  We then apply this theory in section $\ref{sec2}$ to prove Theorem A and to define the $\mathsf{Grav}_\infty$ structures that will be the subject of Theorem B.  In section $\ref{sec3}$ we apply our constructions from section $\ref{secmixedcpx}$ to categories of colored operads and operadic bimodules.  The resulting structures are then used in section $\ref{sec4}$ to prove Theorem B in the case $M=\mathbb{R^D}$ using the theory of operadic torsors.  Finally in section $\ref{sec5}$ we globalize the results using a suitable modification of the usual formal geometry techniques developed in \cite{Kontsevich}.

\subsection*{Notation and conventions.}  We work in the category of differential graded (dg) vector spaces over a field $k$ of characteristic $0$. We use the notation $\Sigma$ to denote the suspension of vector spaces and $\mathfrak s$ to denote operadic suspension, such that for a vector space $V$, $(\Sigma V)_d = V_{d-1}$ and $\Sigma V$ is an $\op O$ algebra if and only if $V$ is an $\mathfrak s \op O$ algebra, for any operad $\op O$.

We assume familiarity with operads, operadic twisting, and graph complexes.  A table of the graph complex operads appearing in this paper and relevant references follows:

%other rows might be cyclic deligne conjecture type dg operads or BVGra or BVGraphs...
$\bigskip$
\begin{center}
	\begin{tabular}{c|c|c|c}
		Notation & Graphs & Differential & c.f.\  \\	\hline	
		$\mathsf{B}$ & planar rooted  & none 	& e.\ g.\	\\
		&  trees &  	&  \cite{GV}	\\ \hline 
		$\mathsf{M}$ &  stable planar rooted trees w/ & via $\mathsf{Tw}$	& \cite{KS} \\ & internal and external vertices & of $\mathsf{B}$  & 	\\ \hline 
		$\mathsf{B}_\circlearrowright$ & planar connected & none 	& \cite{Ward2} 	\\
		& and genus $0$ &  	&   	\\ \hline 
		$\mathsf{M}_\circlearrowright$ & planar, connected, stable, genus 0, & via $\mathsf{Tw}$	& \cite{Ward2} \\
		&  with internal and external vertices & of $\mathsf{B}_\circlearrowright$	& \\ \hline 
		$\mathsf{Gra}$ &  graphs without & none	& \cite{homotopybracesformality} \\
		&  tadpoles & 	& \\ \hline 
		$\mathsf{Graphs}$ &  internal and external vertices & via $\mathsf{Tw}$	& \cite{homotopybracesformality} \\ & and no tadpoles & of $\mathsf{Gra}$ 	& \\ \hline 
		$\mathsf{vKGra}$ &  boundary and bulk vertices & $\partial(v)=\tadp $	& Section $\ref{sec: Gra}$ \\ & with tadpoles and powers of $v$ & 	& 
		
	\end{tabular}
\end{center}
$\bigskip$

\subsection*{Acknowledgements} We would like to thank the Max Planck Institute and the organizers of the the MPIM Program on Higher Structures in Geometry and Physics from Winter 2016, as well as Stockholm University, for fostering our collaboration on this project.  It is a pleasure to thank G.\ Horel, K.\ Irie, A.\ Khoroshkin, B.\ Vallette and T.\ Willwacher for helpful conversations.
The first author been supported by the Swiss National Science Foundation, grant 200021\_150012, and by the NCCR SwissMAP funded by the Swiss National Science Foundation.

\section{Operads in Mixed Complexes and $S^1$-operads.}\label{secmixedcpx}  In this section we consider the interaction of mixed complexes, operads, and cyclic homology.

\begin{definition} 
 A mixed complex is a triple $(V, d, \Delta)$ such that $(V, d)$ is a cochain complex and $\Delta\colon V\to V$ is a degree $-1$ operator such that $\Delta^2=0$ and $d\Delta+\Delta d =0$.
\end{definition}

 The category of mixed complexes is naturally a symmetric monoidal category with monoidal unit $(k,0,0)$.  The monoidal product is
\begin{equation*}
(A,d_A,\Delta_A)\tensor(B,d_B,\Delta_B)=(A\tensor_k B, d_A\tensor 1_B+1_A\tensor d_B,  \Delta_A\tensor 1_B+1_A\tensor \Delta_B) 
\end{equation*}
where we follow the Koszul rule for evaluation over a tensor product.  Explicitly $d(a \tensor b) = d(a) \tensor b + (-1)^{|a|}a \tensor d(b)$.

Since mixed complexes form a symmetric monoidal category, one can talk about operads valued in mixed complexes.  The category of such will be denoted $\mxdops$.  An object in $\mxdops$ is given by a triple $(\op{O}, d, \Delta)$; where $\op{O}$ is a graded operad and where $d$ and $\Delta$ are maps of $\mathbb{S}$-modules which anti-commute and which are compatible with the operad structure. 

If $(A,d,\Delta)$ is a mixed complex, the operad $End_A$ can be viewed as an operad in $\mxdops$, but it has more structure.  Thus we introduce the following definition:

\begin{definition}  An $S^1$-operad is an operad under the operad $H_\ast(S^1)$.  The category of such is denoted $S^1$-$\op{O}ps$.
\end{definition}

We denote the fundamental class of $S^1$ by $\delta$ and by abuse of notation we often use $\delta$ to denote its image in an $S^1$-operad.

\begin{const}  Let $H_\ast(S^1)\to\op{Q}$ be a morphism of dg operads.  Define $\Delta:=\{\delta,-\}$ where $\{-,-\}$ is the external Lie bracket associated to $\op{O}$ (see e.g.\ \cite{Ward2} Lemma 1.9).  Explicitly for $a\in\op{O}(n)$ of degree $d$ we define:
\begin{equation*}
\Delta(a):= \delta_\op{Q} \circ_1 a -(-1)^d\sum_{i=1}^n a\circ_i\delta_\op{Q}
\end{equation*}
Then $(\op{Q}, d_\op{Q},\Delta_\op{Q})$ is an operad in mixed complexes.  This gives a functor from $S^1$-$\op{O}ps \to \mxdops$, which we call $X$ for eXternal.
\end{const}

\begin{example}\label{Gerex}  Viewing the operad $\mathsf{Ger}$ as a suboperad of the $S^1$-operad $\mathsf{BV}$, we define $\Delta$ as above, and then show it restricts to these subspaces.  Hence, $(\mathsf{Ger}, 0, \{\delta_{\mathsf{BV}},-\})$ is an operad in mixed complexes.  Since the operator $\{\delta_{\mathsf{BV}},-\}$ captures the rotation of a configuration of little disks, we will also write $(\mathsf{Ger}, 0, R)$ for this object in $\mxdops$.
\end{example}

\begin{example}\label{graex}  More generally, we define $\mathsf{Gra}(n)$ to be the $S_n$-module spanned by graphs with $n$ numbered vertices having no tadpoles.  (Recall a tadpole is an edge which is incident to the same vertex at both ends).  Insertion of graphs makes $\mathsf{Gra}$ an operad; in particular it is a suboperad of the $S^1$-operad of all graphs in which $\delta$ is the tadpole graph (one edge and one vertex).  One may then form an operad in mixed complexes $(\mathsf{Gra}, 0, \{\delta,-\})$.  There is an inclusion of $(\mathsf{Ger}, 0, \{\delta_{\mathsf{BV}},-\})\hookrightarrow (\mathsf{Gra}, 0, \{\delta,-\})$ in $\mxdops$ given by sending the commutative product to the graph with two vertices and no edges and sending the bracket to the graph with two vertices connected by an edge.  We will revisit this example in greater detail in Section $\ref{sec: Gra}$.
\end{example}

\begin{example}  If $\op{X}$ is a operad in the category of $S^1$-spaces, its singular chains $(S_\ast(\op{X}),d,\Delta)$ are naturally an operad in mixed complexes.  Here $\Delta$ is realized at the level of singular chains via projection $\Delta^1\to S^1=\Delta^1/\partial\Delta^1$ and the shuffle product for simplicies.  We will often consider the case $\op{X}=\op{D}_2$, the little disks operad.
\end{example}

\begin{example}\label{algebras}  If $(A, d, \Delta)$ is a mixed complex then by default we consider $\End_A\in S^1$-$\op{O}ps$ by $\delta\mapsto \Delta$.  We may also view $\End$ as internal to $\mxdops$ by defining $\End_{A}^{\mathsf{mxd}} := X(\End_A)$.  The terminology ``an algebra over'' either an object in $S^1$-$\op{O}ps$ or $\mxdops$ is understood as a morphism to the respective $\End$.
\end{example}

Observe the following {\bf non-example:} the minimal operad $(\mathsf{M},d,R)$ of \cite{KS} is not an object in $\mxdops$.  Here $R$ is as defined in \cite{Ward2}; it moves the root from black to white in all ways and from white to zero.  This is a square zero operator which commutes with $d$ but it does not distribute over the compositions maps.  In order to deal with this we introduce the following weaker notion:

\begin{definition}  Let $\op{P}$ be a dg operad and $\rho$ a degree $- 1$, square zero operator on the underlying dg $\mathbb{S}$-module $\rho_n\colon \op{P}(n)\to\op{P}(n)$ (so $d\rho+\rho d=0$).  A pair $(\op{P}, \rho)$ is called a {\it rotational operad} if $\rho(a\circ_i \rho(b))= \rho(a)\circ_i \rho(b)$.  We denote the category of rotational operads as $\rotops$.  
\end{definition}

\begin{example}  Every operad in mixed complexes can be viewed as a rotational operad, via $\rho=\Delta$, (but not vice versa as per the following example).  The induced functor will be denoted $\iota\colon\mxdops\to\rotops$.
\end{example}

\begin{example}\label{minex}  $(\mathsf{M},d,R)$ is a rotational operad.  This follows from Lemma 2.6 of \cite{Ward2}.
\end{example}

\begin{example}\label{Zex}  Every $S^1$-operad may be viewed as a rotational operad 
by defining $\rho:=\delta\circ_1 -$.
\end{example}

\begin{example}  Consider the topological operad of spineless cacti $\op{C}act$ with level-wise $S^1$ action given by moving the base point (aka global zero) (c.f.\ \cite{Vor, KCacti}).  This is not an operad in the category of $S^1$ spaces; consequently the induced structure on singular chains $(S_\ast(\op{C}act),d,R)$ is not an operad in mixed complexes.  However it is a rotational operad.
\end{example}

\begin{proposition}\label{eqprop}  	There is a weak equivalence of rotational operads $(\mathsf{M},d,R)\sim (S_\ast(\op{D}_2),d,\Delta)$.
\end{proposition}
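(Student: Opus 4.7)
The plan is to construct a zigzag of weak equivalences in $\rotops$ passing through the spineless cacti operad $\op{C}act$. Ignoring the rotational structure, this equivalence is essentially the statement of the (cyclic-free version of) Deligne's conjecture as established by Kontsevich–Soibelman, McClure–Smith, Berger–Fresse, and Kaufmann: there is a zigzag of quasi-isomorphisms of dg operads
\[
(\mathsf{M},d) \xleftarrow{\sim} (\mathsf{N},d) \xrightarrow{\sim} (S_\ast(\op{C}act),d) \xrightarrow{\sim} (S_\ast(\op{D}_2),d),
\]
where $\mathsf{N}$ is any convenient intermediate cellular model (e.g., the normalized cellular chains on a CW decomposition of $\op{C}act$ indexed by stable planar rooted trees). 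The task is therefore to enhance each map in this zigzag to a morphism in $\rotops$.

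For the rightmost arrow, I would use that the Voronov/Kaufmann homotopy equivalence $\op{C}act \to \op{D}_2$ intertwines the natural $S^1$-action on the outer circle of a cactus (moving the global basepoint) with the rotation of the output framing of a configuration of disks. Passing to singular chains, this identifies the rotation operator on $S_\ast(\op{C}act)$, realized via shuffle product with the chain representing the projection $\Delta^1 \to S^1$, with $\Delta$ on $S_\ast(\op{D}_2)$. Note we only need compatibility in $\rotops$, not in $\mxdops$, which is fortunate since neither $\op{C}act$ nor $\op{D}_2$ (without inner framings) is an operad in $S^1$-spaces.

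For the leftmost part of the zigzag, the cellular realization of spineless cacti identifies the generators of $\mathsf{M}$ (stable planar rooted trees with internal and external vertices) with combinatorial types of cacti. Under this correspondence, the operator $R$ of \cite{Ward2}, which moves the root of a tree from the black vertex to each external white vertex position in all possible ways and sends white roots to zero, should match the chain-level operator implementing the $S^1$-rotation of the basepoint around the outer lobe of a cactus: passing the basepoint across an attached lobe reroots the underlying tree at the corresponding external vertex. I would verify this cell-by-cell on generators, where the relation $\rho(a\circ_i \rho(b))=\rho(a)\circ_i \rho(b)$ of a rotational operad corresponds to the fact that once the basepoint has been moved into an inner lobe it cannot be moved again along the outer circle.

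The main obstacle will be this last cell-by-cell verification. The operator $R$ is defined combinatorially on trees via the root-moving rule, while the rotation on $S_\ast(\op{C}act)$ is a shuffle-product construction, and reconciling these requires a careful examination of the Voronov–Kaufmann cellular structure. In particular, one must confirm that the precise signs and summation conventions agree, and that the failure of $R$ to distribute over compositions (so that $(\mathsf{M},d,R)$ is only in $\rotops$, not $\mxdops$, as noted in the non-example above) is exactly matched by the corresponding failure for the chain-level basepoint rotation on cacti.
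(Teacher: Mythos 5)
Your overall route --- a zig-zag through spineless cacti, using the level-wise $S^1$-equivariance of the topological comparison with $\op{D}_2$ on one side and a combinatorial identification of $R$ with basepoint rotation on the cellular side --- is the same as the paper's. But there is a genuine gap in the middle of your zig-zag. You posit honest dg-operad maps $(\mathsf{M},d) \leftarrow (\mathsf{N},d) \rightarrow (S_\ast(\op{C}act),d)$ with $\mathsf{N}$ ``any convenient cellular model.'' No such model with honest operad maps in both directions is available off the shelf: the CW decomposition of cacti indexed by trees lives on the normalized spaces $\op{C}act^1$, which are only an operad \emph{up to homotopy} (the gluings in $\op{C}act$ must be renormalized), so the comparison $CC_\ast(\op{C}act^1)\to S_\ast(\op{C}act)$ is only an $\infty$-quasi-isomorphism. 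The paper rectifies this via bar-cobar, producing the zig-zag $CC_\ast(\op{C}act^1)\leftarrow \Omega(\mathsf{B}(CC_\ast(\op{C}act^1)))\to \Omega(\mathsf{B}(S_\ast(\op{C}act^1)))\to S_\ast(\op{C}act)$, and then must argue separately that $\Omega(\mathsf{B}(\op{P}))$ of a rotational operad carries a compatible rotation operator and that the counit is a map in $\rotops$. This rectification step, and the verification that the rotational structure survives it, is the real content of the middle of the argument and is absent from your proposal.

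Two smaller points. First, you assert a direct arrow $S_\ast(\op{C}act)\to S_\ast(\op{D}_2)$; the known $S^1$-equivariant comparison is a zig-zag $\op{C}act\leftarrow W(\op{D}_2)\rightarrow \op{D}_2$ (Westerland), not a single operad map, though this is harmless since only a zig-zag is needed. Second, the ``main obstacle'' you identify --- matching $R$ on $\mathsf{M}$ with basepoint rotation on the cells of $\op{C}act^1$, including the contraction of associahedra --- is already established in \cite{Ward2} (Lemma 4.6), so it need not be redone cell by cell; the genuinely delicate step is the rectification described above.
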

 
\begin{proof}  By weak equivalence of rotational operads we mean a zig-zag of quasi-isomorphisms of dg operads which preserves the $\rho$ operator at each stage.
	
From \cite[Lemma 7.8]{Westerland} we know there exists a zig-zag of weak equivalences of topological operads connecting $\op{C}act \stackrel{\sim} \leftarrow W(\op{D}_2) \stackrel{\sim} \rightarrow \op{D}_2$ which preserve the $S^1$ actions level-wise.  Taking singular chains we have an equivalence of rotational operads $(S_\ast(\op{C}act),d,R)\sim (S_\ast(\op{D}_2),d,\Delta)$.

We now consider the inclusion of the normalization $\op{C}act^1\stackrel{\sim}\hookrightarrow\op{C}act$ after \cite{KCacti}.  The spaces $\op{C}act^1$ are CW complexes and form an operad up to homotopy by normalizing the gluing maps in $\op{C}act$ (c.f.\ \cite{KCacti}).  Taking chains we find the following sequence of homotopy operads:
\begin{equation*}
CC_\ast(\op{C}act^1)\stackrel{\sim}\to S_\ast(\op{C}act^1)\stackrel{\sim}\rightsquigarrow S_\ast(\op{C}act)
\end{equation*}
where $\rightsquigarrow$ denotes an $\infty$-quasi-isomorphism whose first component is induced by the inclusion of spaces.  We emphasize that this sequence respects the underlying mixed complex structure at each arity.
	
From \cite{KCacti} we know that the cellular chains $CC_\ast(\op{C}act^1)$ form an honest dg operad.  Hence  the composite $CC_\ast(\op{C}act^1)\stackrel{\sim}\rightsquigarrow S_\ast(\op{C}act)$, may be realized as a map of honest dg co-operads $\mathsf{B}(CC_\ast(\op{C}act^1))\stackrel{\sim}\to \mathsf{B}(S_\ast(\op{C}act))$ and this morphism allows us to construct a zig-zag of dg operads:
\begin{equation}\label{pfseq}
CC_\ast(\op{C}act^1)\stackrel{\sim}\leftarrow
\Omega(\mathsf{B}(CC_\ast(\op{C}act^1)))\stackrel{\sim}\rightarrow
\Omega(\mathsf{B}(S_\ast(\op{C}act^1)))\stackrel{\sim}\to S_\ast(\op{C}act)
\end{equation}
The ends of this sequence are rotational operads with operator $R$ induced by the $S^1$-action on the underlying spaces.  If $\op{P}$ is a rotational operad then $\Omega(\mathsf{B}(\op{P}))$ inherits the structure of an operad in mixed complexes from the Leibniz rule and the counit of the bar-cobar adjunction
$\Omega(\mathsf{B}(\op{P}))\stackrel{\sim}\to\op{P}$ is a weak equivalence of rotational operads.  Since the original $\infty$-quasi-isomorphism was compatible with the underlying mixed-complex structure, it follows that the diagram in line $\ref{pfseq}$ constitutes a weak equivalence of rotational operads between $(CC_\ast(\op{C}act^1),d,R)$ and $(S_\ast(\op{C}act),d,R)$.

To finish the proof we recall (see \cite{Ward2} Lemma 4.6) that contracting associahedra in the minimal operad commutes with the operator $R$ and gives us a weak equivalences of rotational operads $(\mathsf{M},d,R)\sim (CC_\ast(\op{C}act^1),d,R)$. \end{proof}

\begin{const}\label{twistconst}  Define a functor $\theta\colon\rotops\to \mxdops$ by taking a dg rotational operad $\op{O}$ to $(\theta_\rho(\op{O}),d,\rho)\in \mxdops$ where $\theta_\rho(\op{O})(n):=\Sigma^{-1} \op{O}(n)$, with ``twist gluings'' $a \tilde{\circ}_i b := a\circ_i \rho(b)$.  (It is easy to check that the twist gluings satisfy associativity and are compatible with $d$ and $\rho$).  For every such $\op{O}$ there is a morphism of rotational operads $\iota(\theta_\rho(\op{O}))\to\op{O}$ given by $a\mapsto \rho(a)$.  We denote the induced natural transformation $\iota\circ\theta\Rightarrow id_{\rotops}$ by $\theta^{-1}$.
\end{const}

\begin{remark}  The operad $\theta(\op{O})$ does not come with a unit for the composition in $\theta(\op{O})(1)=\Sigma{\op{O}}(1)$.  Thus here we are considering non-unital or ``pseudo-operads'' in the parlance of some authors.
\end{remark}

\begin{lemma}  Given $(\op{P},d,\rho)\in\rotops$, the natural transformation $\theta^{-1}$ above factors as:
	\begin{equation*}
	\theta(\op{P})\stackrel{\rho}\to \mbox{Im}(\rho)\hookrightarrow \ker(\rho)\to \op{P}
	\end{equation*}
\end{lemma}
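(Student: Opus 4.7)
The lemma is essentially tautological once definitions are unpacked, so the plan is simply to identify the three arrows in the claimed factorization and verify their composite recovers $\theta^{-1}$.

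First I would unwind the construction above: $\theta^{-1}_{\op{P}} : \iota(\theta(\op{P})) \to \op{P}$ is defined by $a \mapsto \rho(a)$. Because $\theta(\op{P})(n) = \Sigma^{-1}\op{P}(n)$ and $\rho$ has degree $-1$, this is a degree zero map of dg $\mathbb{S}$-modules, and the identity $d\rho + \rho d = 0$ ensures it is a chain map. I would then introduce the intermediate dg sub-$\mathbb{S}$-modules $\mathrm{Im}(\rho) \subseteq \ker(\rho) \subseteq \op{P}$, observing that the first containment uses precisely the rotational operad axiom $\rho^2 = 0$, while the second is tautological. Both sub-$\mathbb{S}$-modules are $d$-stable, again via $d\rho + \rho d = 0$, so they really are dg sub-objects.

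Next I would write down the three arrows explicitly: (i) $\theta(\op{P}) \twoheadrightarrow \mathrm{Im}(\rho)$ is the corestriction of $\rho$ onto its image, surjective by definition; (ii) $\mathrm{Im}(\rho) \hookrightarrow \ker(\rho)$ is the inclusion justified above; (iii) $\ker(\rho) \hookrightarrow \op{P}$ is the tautological inclusion. Tracing an element $a \in \theta(\op{P})$ through the composite then yields $a \mapsto \rho(a) \mapsto \rho(a) \mapsto \rho(a)$, matching $\theta^{-1}(a) = \rho(a)$ on the nose.

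Naturality of the factorization in $\op{P}$ is immediate, since any morphism of rotational operads preserves $\rho$ by definition and therefore carries $\mathrm{Im}(\rho)$ and $\ker(\rho)$ to their counterparts in the target. I do not anticipate any genuine obstacle; the only points to watch are the suspension degree shift and the decision of which category the factorization lives in (I would present it at the level of dg $\mathbb{S}$-modules, since $\mathrm{Im}(\rho)$ is visibly a suboperad by the rotational axiom $\rho(a)\circ_i\rho(b) = \rho(a \circ_i \rho(b))$, but $\ker(\rho)$ need not be closed under compositions in general). The lemma is a small bookkeeping statement, presumably intended to isolate the middle homology $\ker(\rho)/\mathrm{Im}(\rho)$ as the obstruction controlling when $\theta^{-1}$ fails to be an isomorphism.
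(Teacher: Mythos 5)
Your proof is correct; the paper in fact states this lemma without proof, treating it as the immediate unpacking of definitions that you carry out ($\theta^{-1}=\rho$, $\rho^2=0$ gives $\mathrm{Im}(\rho)\subseteq\ker(\rho)$, and $d\rho+\rho d=0$ gives $d$-stability). Your added care about which category the factorization lives in — noting that $\mathrm{Im}(\rho)$ is a suboperad by the rotational axiom while $\ker(\rho)$ is in general only a dg sub-$\mathbb{S}$-module — is consistent with how the paper later uses $\mathrm{Im}(R)$ to define $\mathsf{M}_\circlearrowright$ as a dg operad.
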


We conclude this subsection by recalling, in the present terminology, a definition from \cite{Ward2}:

\begin{definition}\label{Mcircdef} Considering the rotational operad $(\mathsf{M},d,R)$, we define the dg operad $(\mathsf{M}_\circlearrowright, d)$ to be $(im(R),d)$.  In particular there exists an inclusion of dg operads $\mathsf{M}_\circlearrowright\hookrightarrow \mathsf{M}$.
\end{definition}

\subsection{Adjoints and algebras}

We have seen that if $(A,d,\Delta)$ is a mixed complex then $\End_A$ is an $S^1$-operad.  As such, algebras over operads in $\mxdops$ are controlled by morphisms to $X(\End_A)$; this prompts us to construct the left adjoint to $X$.

\begin{const}  Define a functor $W\colon\mxdops\to S^1$-$\op{O}ps$  by 
\begin{equation*}
W(\op{Q},d_\op{Q}, R)=(\op{Q}\star k[\delta])/\langle R-\{\delta,-\}\rangle
\end{equation*}
In words: take the free $S^1$-operad on the underlying operad and identify the two candidates for rotation; the original $R$ and the external bracket with the newly added $\delta$.  This $S^1$ operad is given the differential induced by $d_\op{Q}$ and the relation $d(\delta)=0$.
\end{const}

\begin{lemma}  $(W,X)$ are an adjoint pair.
\end{lemma}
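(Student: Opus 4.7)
The plan is to establish the adjunction by exhibiting a natural bijection
\begin{equation*}
\Hom_{S^1\text{-}\op{O}ps}(W(\op{Q}), \op{P}) \;\cong\; \Hom_{\mxdops}(\op{Q}, X(\op{P}))
\end{equation*}
for all $(\op{Q}, d, R) \in \mxdops$ and $\op{P} \in S^1$-$\op{O}ps$. The guiding observation is that a morphism in $\mxdops$ is a morphism of underlying dg operads together with the compatibility $f\circ R = \{\delta_{\op{P}}, -\}\circ f$, while a morphism in $S^1$-$\op{O}ps$ is a morphism of underlying dg operads sending $\delta$ to $\delta_{\op{P}}$. Since $X$ does not alter the underlying operad, the universal property of the coproduct and the defining relation of $W$ together make both directions transparent.

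First, given $f\colon \op{Q} \to X(\op{P})$ in $\mxdops$, I would combine the underlying dg operad morphism $\op{Q} \to \op{P}$ with the $S^1$-structure $k[\delta] \to \op{P}$, $\delta \mapsto \delta_{\op{P}}$, to obtain, via the universal property of the coproduct of dg operads, a morphism $\tilde f\colon \op{Q}\star k[\delta] \to \op{P}$. The mixed complex compatibility of $f$ together with naturality of the external bracket then give
\begin{equation*}
\tilde f(R(a) - \{\delta, a\}) \;=\; f(R(a)) - \{\delta_{\op{P}}, f(a)\} \;=\; 0,
\end{equation*}
so $\tilde f$ descends to a morphism $\bar f\colon W(\op{Q}) \to \op{P}$, which is automatically a morphism of $S^1$-operads since it sends $\delta$ to $\delta_{\op{P}}$.

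Conversely, given $g\colon W(\op{Q}) \to \op{P}$ in $S^1$-$\op{O}ps$, I would restrict along the canonical map $\op{Q} \to W(\op{Q})$ to a dg operad morphism $\op{Q} \to \op{P}$. That this is a morphism in $\mxdops$ uses the defining relation of $W$ together with the fact that $g$ preserves $\delta$:
\begin{equation*}
g(R(a)) \;=\; g(\{\delta, a\}) \;=\; \{g(\delta), g(a)\} \;=\; \{\delta_{\op{P}}, g(a)\}.
\end{equation*}
That these two assignments are mutually inverse and natural in both variables then follows formally from the universal properties of the coproduct and the quotient.

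The one preliminary point requiring verification, and the only place I expect any real (if mild) technical content, is that $W(\op{Q})$ is a well-defined dg $S^1$-operad, i.e., that the differential on $\op{Q}\star k[\delta]$ preserves the two-sided operadic ideal generated by $R(a) - \{\delta, a\}$. This reduces immediately to the mixed complex axioms $dR + Rd = 0$ and $d\delta = 0$, which together with the Leibniz rule for $d$ on the external bracket yield $d(R(a) - \{\delta, a\}) = -(R - \{\delta, -\})(da)$ up to sign, an element of the ideal.
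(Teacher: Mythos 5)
Your proof is correct and follows essentially the same route as the paper: construct the extension to the free $S^1$-operad (coproduct with $k[\delta]$), check that the defining relation of $W$ is killed so the map descends, and observe the converse. You spell out the converse direction and the well-definedness of the differential on $W(\op{Q})$ more explicitly than the paper does, but the argument is the same.
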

\begin{proof}  Given $\phi\in Hom_{\mxdops}(A, X(B))$, we may forget the mixed complex structures and take the adjoint to forgetting the morphism from $H_\ast(S^1)$ to get a map $Free_{S^1}(A)\to B$, which we call $\tilde{\phi}$.  We then calculate
\begin{equation*}
\tilde{\phi}(R(a)-\{\Delta_{W(A)},a\})=\phi(R(a))-\tilde{\phi}(\{\Delta_{W(A)},a\})=\phi(R(a))-\{\Delta_B,\phi(a)\}
\end{equation*}
but we now remember that $\phi$ was a map of mixed complexes so this last expression equals $0$.  Thus $\tilde{\phi}$ lifts over the quotient of such expressions, that is $\tilde{\phi}\in Hom_{S^1\text{-}Ops}(W(A), B)$; and conversely.
\end{proof}
Recall that for $\op{O}\in\mxdops$, the structure of an $\op{O}$-algebra on a mixed complex $(A,d,\Delta)$ is a morphism $\op{O}\to \End_{A}^{\mathsf{mxd}} := X(\End_A)$.  Thus we immediately see:
\begin{corollary}\label{algcor}  Let $\op{O}\in\mxdops$.  The $\op{O}$-algebra structures on a mixed complex $(A,d,\Delta_A)$ are in bijective correspondence with morphisms $W(\op{O})\to \End_A$ in $S^1$-$\op{O}ps$.

\end{corollary}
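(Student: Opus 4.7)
The plan is to observe that this corollary is essentially a direct unpacking of the adjunction $(W,X)$ established in the preceding lemma, combined with the stated definition of an algebra over an object in $\mxdops$.

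First, I would recall from Example \ref{algebras} that an $\op{O}$-algebra structure on a mixed complex $(A,d,\Delta_A)$ is, by definition, a morphism $\op{O} \to \End_A^{\mathsf{mxd}} = X(\End_A)$ in the category $\mxdops$. Thus the set of $\op{O}$-algebra structures on $(A,d,\Delta_A)$ is precisely $\Hom_{\mxdops}(\op{O}, X(\End_A))$.

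Next, I would apply the adjunction $(W,X)$ from the previous lemma, which supplies a natural bijection
\begin{equation*}
\Hom_{\mxdops}(\op{O}, X(\End_A)) \;\cong\; \Hom_{S^1\text{-}\op{O}ps}(W(\op{O}), \End_A).
\end{equation*}
Composing these two identifications gives the claimed bijection. There is really no obstacle here; the only point that might deserve a brief sentence in the write-up is that the adjunction unit/counit correctly matches the two candidate ``rotation'' operators on $\End_A$: the externally-defined $\{\delta,-\}$ coming from the $S^1$-operad structure on $\End_A$ and the internally-defined $\Delta_A$ coming from the mixed complex structure on $A$. This compatibility is precisely what the relation $R - \{\delta,-\}$ in the definition of $W$ is designed to enforce, and it is exactly what was verified in the proof of the adjunction lemma when checking that $\tilde{\phi}$ descends to the quotient. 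Hence no further work is needed beyond invoking the adjunction.
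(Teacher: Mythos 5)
Your proposal is correct and matches the paper exactly: the paper also deduces the corollary immediately from the definition of an $\op{O}$-algebra as a morphism $\op{O}\to X(\End_A)$ in $\mxdops$ together with the adjunction $(W,X)$. Your extra remark about the relation $R-\{\delta,-\}$ matching the two rotation operators is precisely the content already verified in the proof of the adjunction lemma, so nothing further is needed.
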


\begin{example}\label{Wgerex}  $W(\mathsf{Ger})=\mathsf{BV}$.  To see this, notice $R(\mu)=\{\Delta,\mu\}=b$ (the bracket) and $R(b)=\{\Delta, b\}=0$.  In otherwords, a $W(\mathsf{Ger})$ algebra is a Gerstenhaber algebra and a mixed complex such that $\Delta$ is a derivation of the bracket, and the failure to be a derivation of the product is the bracket.  In particular a mixed complex is a (dg) BV algebra iff and only if it is a Gerstenhaber algebra for which the two inherent notions of rotation coincide.
\end{example}

\begin{remark}  The functor $ S^1$-$\op{O}ps\to\rotops$ defined in Example $\ref{Zex}$ also has a left adjoint by a similar construction, and hence we may also encode algebras over rotational operads in the category of $S^1$-operads.  However, this will not be needed for our present purposes.
\end{remark}

Let us now gather together the relevant constructions of this subsection:

\begin{tabular}{lcr}
	$\begin{aligned}[top] 
	\xymatrix{  \ar@/_/[rr]_W \ar@/_/[d]_\iota \mxdops  &  & 
		S^1\text{-}\op{O}ps  \ar@/_/[ll]_X \\
		\ar@/_/[u]_\theta \rotops  & & }
	\end{aligned}$ &
	&
	$\begin{aligned} \\ \left\{\begin{array}{l}
	\iota \text{ via inclusion}  \\ 
	X \text{ via } \Delta:=\{\delta,-\} \\
	\theta\text{ via $\Sigma$ and twist gluings } a \tilde{\circ}_i b := a\circ_i \rho(b) \\ 
	(W,X) \text{ an adjoint pair.} \end{array}	\right.\end{aligned}$  \\ 
\end{tabular}

\subsection{Levelwise cyclic homology.}

Given an operad $(\op{O},d,\Delta)\in\mxdops$ we may take the cyclic homology of each level/arity.  These spaces still form a dg operad.  We will also need to consider negative and periodic variants.  Having fixed cohomological conventions for our mixed complexes, we have $|d|=1$, $|\Delta|=-1$, $|u|=2$; we also define $v:=u^{-1}$ so that $|v|=-2$.

\begin{const}\label{lcc}  Define functors $\lhc,\nhc,\phc\colon \mxdops\to \mxdops$ by:
\begin{align*}
&\lhc(\op{O},d, \Delta)(n)= (\op{O}(n)\tensor k[v], d+\Delta u, \Delta) \\
&\nhc(\op{O},d, \Delta)(n)= (\op{O}(n)\tensor k[u], d+\Delta u, \Delta) \\
&\phc(\op{O},d, \Delta)(n)= (\op{O}(n)\tensor k[u,v], d+\Delta u, \Delta)
\end{align*}
with the operad structure:
\begin{equation*}
(a\tensor v^r)\circ_i (b \tensor v^s) :=(a\circ_i b)\tensor v^{r+s}
\end{equation*}
It is then straight forward to check associativity and compatibility of the differential and the operad structure.

The functor $\lhc$ will be called the level-wise cyclic chain functor and its homology is called the level-wise cyclic homology, denoted $\hlhc(\op{O})$.  We similarly refer to the negative $\nhc$ and $\phc$ periodic variants.  Notice that we call this constructions cyclic {\it homology} regardless of the degree conventions of our mixed complexes.  This is because we are considering the mixed complexes themselves and not functions on them.  We also observe that there is a useful modification of this construction which takes the completed tensor product, but since we will be considering $\op{O}$ which are bounded and of finite type, we are not concerned with this distinction. 

A weak equivalence in the category $\mxdops$ is a zig-zag of morphisms each of which are level-wise quasi-isomorphisms.  Note that since $\lhc,\nhc,\phc$ preserve level-wise quasi-isomorphisms, they preserve weak equivalences. 
\end{const}

\begin{definition}\label{tccdef}  We define the functor $\tcc\colon \rotops\to \mxdops$ by $\tcc:=\lhc\circ \theta$.  If $\op{O}\in\mxdops$ we write $\tcc(\op{O})$ in place of $\tcc(\iota(\op{O}))$ without further ado.  We also define $\thc(-)\coloneqq H^\ast(\tcc(-))$.
\end{definition}

Spelling out the definition of the functor $\tcc$, we see that as an $\mathbb S$-module we can identify $\tcc (\op O) = \Sigma^{-1} \op O [v]$ and under this identification the composition maps are given by ``twisted gluings'' 
$$(p\tensor v^r) \tilde{\circ}_i (q\tensor v^s) = (p \circ_i \rho(q))\tensor v^{r+s}, \ \text{ for } p,q\in \op O. $$

Given $\op{O}\in\mxdops$, there is a short exact sequence in $\text{dg-}\mathbb{S}$-Mod
\begin{equation}\label{ses}
0\to\nhc(\op{O})\hookrightarrow  \phc(\op{O})\stackrel{u}\longrightarrow \Sigma^{-2} \lhc(\op{O})\to 0
\end{equation}
the map labeled by $u$ is ``multiplication by $u$'' and sends $v$ to $1$, $1$ to $0$, etc. 

The connecting homomorphism in the associated long exact sequence can be described via $\theta$ (Construction $\ref{twistconst}$).  First observe that there is an isomorphism of $\mathbb{S}$-modules $\thc(\op{O})\cong \Sigma^{-1}\hlhc(\op{O})$ and this endows the right hand side with an operad structure.  

\begin{lemma}\label{morlem}
The boundary map in the long exact sequence associated to equation $\ref{ses}$ is a morphism of operads $\thc(\op{O})\cong\Sigma^{-1}\hlhc(\op{O})\to \hnhc(\op{O})$.  
\end{lemma}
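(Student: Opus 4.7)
The plan is to produce an explicit chain-level formula for $\partial$ and then verify multiplicativity by invoking the rotational operad axiom. A standard diagram chase on $(\ref{ses})$ proceeds as follows: a class in $\hlhc(\op O)(n)$ represented by a cocycle $a=\sum_{i\geq 0} a_i v^i$ (satisfying $d(a_i)+\Delta(a_{i+1})=0$ for all $i$) lifts to $a\cdot v\in \phc(\op O)$, and applying $d+\Delta u$ the terms with positive powers of $v$ cancel by the cocycle condition, leaving $\Delta(a_0)\in \nhc(\op O)(n)$, which is itself a cocycle since $d\Delta+\Delta d=0$. This yields the explicit formula
\[
\partial[a] \;=\; [\Delta(a_0)] \in \hnhc(\op O)(n).
\]
Under the identification $\thc(\op O)\cong \Sigma^{-1}\hlhc(\op O)$, the degree shifts from $\Sigma^{-1}$, $\Sigma^{-2}$, and $|u|=2$ conspire so that $\partial$ becomes a degree-preserving map of $\mathbb{S}$-modules.

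For multiplicativity, I take two cocycle representatives $a=\sum a_r v^r$ and $b=\sum b_s v^s$. Recalling $\tcc(\op O)=\lhc\circ\theta(\iota(\op O))$, the composition in $\tcc(\op O)$ is the twisted gluing
\[
a\,\tilde\circ_i\, b \;=\; \sum_{r,s\geq 0} \bigl(a_r\circ_i \Delta(b_s)\bigr)\,v^{r+s},
\]
whose $v^0$-coefficient is $a_0\circ_i \Delta(b_0)$. Therefore $\partial[a\,\tilde\circ_i\, b]=[\Delta(a_0\circ_i \Delta(b_0))]$, whereas $\partial[a]\circ_i\partial[b]=[\Delta(a_0)\circ_i \Delta(b_0)]$. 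These two chain representatives agree on the nose by the rotational operad identity $\rho(x\circ_i \rho(y))=\rho(x)\circ_i \rho(y)$ applied with $x=a_0$, $y=b_0$ and with $\rho=\Delta$, which holds in any $(\op O,d,\Delta)\in\mxdops$ because $\Delta$ is a derivation for $\circ_i$ together with $\Delta^2=0$.

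The remaining checks---compatibility with the $\mathbb S$-actions and well-definedness on cohomology---are automatic from the level-wise nature of the constructions and the general properties of the snake lemma. The conceptual heart of the argument, and the reason for introducing $\tcc$ as $\lhc\circ\theta$ rather than applying $\lhc$ directly to $\iota(\op O)$, is precisely the rotational identity above: multiplicativity of $\partial$ with respect to the \emph{untwisted} composition on $\hlhc(\op O)$ would fail because of the derivation defect $\Delta(x\circ_i y)=\Delta(x)\circ_i y\pm x\circ_i \Delta(y)$, and the twisted gluing is designed so that one of the factors is already in the image of $\Delta$, making the two sides match identically. The only item requiring some care is the suspension and sign bookkeeping.
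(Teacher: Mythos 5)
Your proof is correct and follows the same route as the paper: the paper's entire argument is the one-line observation that the connecting map sends $[c_0+c_1u^{-1}+\dots]$ to $[\Delta(c_0)]$, and you supply exactly this diagram chase plus the (implicit in the paper) multiplicativity check via $\Delta(a_0\circ_i\Delta(b_0))=\Delta(a_0)\circ_i\Delta(b_0)$, which is the correct reason the twisted gluings on $\thc(\op O)$ are needed. The only cosmetic remark is that $\Delta(a_0)$ being a $(d+u\Delta)$-cocycle uses $d(a_0)=-\Delta(a_1)$ (or just the snake lemma) rather than anticommutativity alone, but this does not affect the argument.
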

\begin{proof}
This follows from the fact that if $c_0+c_1u^{-1}+\dots$ is a $d+u\Delta$ cycle in $\lhc(\op{O})(n)$ then the image of its homology class under the connecting homomorphism is $[\Delta(c_0)]$.
\end{proof}

  We also remark that if $d_\op{O}=0$, the connecting homomorphism coincides with the homology of $\theta^{-1}$, else it is a combination of $\theta^{-1}$ and projection $u\mapsto 0$.

\begin{lemma}\label{morlem2}  Let $(\op{O},d,\Delta)\in\mxdops$ and suppose that $[\Delta]$ is exact on $H(\op{O},d)$ and that each $(\op{O}(n),d)$ is bounded above.  Then the morphism of operads in Lemma $\ref{morlem}$ is an isomorphism.
\end{lemma}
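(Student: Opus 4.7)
The plan is to reduce the claim to the vanishing $\hphc(\op{O})(n) = 0$ for all arities $n$. In the long exact sequence in cohomology associated to the short exact sequence $(\ref{ses})$, the connecting homomorphism is squeezed between terms of $\hphc$, so it is an isomorphism of graded $\mathbb{S}$-modules exactly when $\hphc(\op{O})$ vanishes. Lemma $\ref{morlem}$ already establishes that this connecting map is an operad morphism under the identification $\thc(\op{O})\cong\Sigma^{-1}\hlhc(\op{O})$, so a linear isomorphism automatically upgrades to an isomorphism of operads.

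To prove the vanishing, I apply a spectral sequence to $\phc(\op{O})(n) = (\op{O}(n)\otimes k[u,v],\, d+u\Delta)$ arising from the decreasing filtration $F^p := \bigoplus_{j\geq p}\op{O}(n)\cdot u^j$ by powers of $u$. Since $d$ preserves this filtration while $u\Delta$ raises it by one, the $E_0$-differential is $d$, yielding $E_1 = H(\op{O}(n),d)\otimes k[u,v]$ with first differential $d_1 = u\cdot[\Delta]$, the Connes-type operator induced by $\Delta$ on cohomology. Under the hypothesis that $[\Delta]$ is exact on $H(\op{O},d)$---which I interpret as saying that the chain complex $(H(\op{O}(n),d),[\Delta])$ is acyclic---the $E_2$-page vanishes identically.

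The assumption that $(\op{O}(n),d)$ is bounded above is used to guarantee convergence of the spectral sequence: it ensures that in each fixed total degree of $\phc(\op{O})(n)$, only a bounded range of $u$-powers contributes, so that the filtration is degreewise bounded and the spectral sequence converges strongly. Combining strong convergence with the vanishing of $E_2$ then forces $\hphc(\op{O})(n) = 0$, completing the argument.

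The main (minor) obstacle will be carefully verifying convergence in the presence of the Laurent variable $u$ (and $v = u^{-1}$); everything else is formal manipulation of the long exact sequence and standard spectral sequence machinery.
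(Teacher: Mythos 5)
Your proposal is correct and follows essentially the same route as the paper: reduce to the vanishing of $\hphc(\op{O})$ via the long exact sequence of $(\ref{ses})$, then kill $\hphc(\op{O})(n)$ using the spectral sequence of the $u$-power filtration on $\op{O}(n)[u,v]$, whose $E_2$-page vanishes by exactness of $[\Delta]$ and which converges by the boundedness hypothesis. Your write-up just spells out the $E_1$-page and the convergence argument in slightly more detail than the paper does.
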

\begin{proof}  It is enough to show that $\hphc(\op{O})$ vanishes.  Consider a filtration of $\op{O}(r)[u,v]$ by the powers of $u$.  The exactness of $[\Delta]$ will result in the $E^2$ page of the associated spectral sequence being exactly $0$.  Since $\op{O}(r)[u,v]$ is bounded in each filtration degree, this spectral sequences converges to $\hphc(\op{O})(r)$, hence the claim.
\end{proof}

\begin{corollary}\label{wecor}
Let $(\op{O},d,\Delta)\in\mxdops$. There are maps of dg operads:
\begin{equation}\label{seq}
\tcc(\op{O})\longrightarrow (\ker(\Delta),d)\longrightarrow \nhc(\op{O})
\end{equation}
which are both weak equivalences if the conditions of Lemma $\ref{morlem2}$ are satisfied.
\end{corollary}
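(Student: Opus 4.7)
The plan is to construct both dg operad morphisms explicitly, verify that their composite realizes the connecting homomorphism of Lemma \ref{morlem} (which becomes a quasi-isomorphism under the hypotheses of Lemma \ref{morlem2}), and then upgrade this to the individual-quasi-isomorphism statement by combining a two-out-of-three argument with a spectral sequence computation.

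First I would define the maps. The inclusion $\iota\colon (\ker\Delta, d)\hookrightarrow \nhc(\op{O})$, $a\mapsto a\otimes 1$, is a morphism of dg operads because the $\Delta u$ summand of the differential on $\nhc(\op{O})$ sends $a\otimes 1$ to $\Delta(a)\otimes u = 0$ on $\ker\Delta$. For $\pi\colon \tcc(\op{O})\to (\ker\Delta, d)$, using the identification $\tcc(\op{O})\cong \Sigma^{-1}\op{O}\otimes k[v]$, I would set
\[ \pi(c_0 + c_1 v + c_2 v^2 + \dots) := \Delta(c_0). \]
The image lies in $\ker\Delta$ by $\Delta^2=0$; the chain map property follows from $u\cdot 1 = 0$ in $k[v]$ together with $d\Delta=-\Delta d$; the operad map property is exactly the rotational operad identity $\Delta(a\circ_i\Delta(b))=\Delta(a)\circ_i\Delta(b)$, which converts the twisted gluings of $\tcc(\op{O})$ into ordinary gluings after applying $\Delta$. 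This is the cochain-level incarnation of the factorization $\theta(\op{O})\to \mathrm{Im}(\rho)\hookrightarrow\ker(\rho)$ from the lemma after Construction \ref{twistconst}, composed with the augmentation $v\mapsto 0$.

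Next I would trace through the zig-zag defining the boundary map of the short exact sequence $(\ref{ses})$ to verify that $\iota\circ\pi$ realizes the connecting homomorphism of Lemma \ref{morlem} on cohomology: a $(d+\rho u)$-cycle $c_0+c_1v+\dots$ in $\tcc(\op{O})=\lhc(\theta(\op{O}))$, lifted to $\phc(\theta(\op{O}))$ and hit with $d+\rho u$, produces $\rho(c_0)u$, whose class in $\hnhc(\op{O})$ is represented by $\Delta(c_0)\otimes 1=\iota\circ\pi(c)$ under the identifications $\thc(\op{O})\cong \Sigma^{-1}\hlhc(\op{O})$. Under the hypotheses of Lemma \ref{morlem2} this boundary is an isomorphism of operads, hence $\iota\circ\pi$ is a quasi-isomorphism; by two-out-of-three it then suffices to show that one of $\pi$ or $\iota$ is a quasi-isomorphism.

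I would choose $\iota$ and show that the cofiber $\nhc(\op{O})/(\ker\Delta\otimes 1)$ is acyclic. Filter this cofiber by $u$-degree; boundedness above of each $(\op{O}(n),d)$ ensures convergence. The $E_1$ page has $H(\op{O})$ in positive $u$-columns with $d_1=[\Delta]u$, which is exact by hypothesis; the zero $u$-column is $H(\op{O}/\ker\Delta)$ with outgoing $d_1$ given by $[\bar a]\mapsto [\Delta(a)]u$. Exactness of $[\Delta]$ gives that the image of this outgoing map is exactly $\ker[\Delta]=\mathrm{Im}[\Delta]$, matching the kernel of the next $d_1$, so the positive columns of $E_2$ vanish; one further checks that the outgoing map on $E_1^0$ is injective by lifting the identity $[a]=[\Delta(c)]+[d(\epsilon)]$ from $H(\op{O})$ to the chain level on ``almost cocycles'' and iterating, using the bounded-above condition on $\op{O}(n)$ to terminate. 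This makes $E_2=0$ throughout, so the cofiber is acyclic, $\iota$ is a quasi-isomorphism, and two-out-of-three yields the same for $\pi$. The main obstacle I expect is precisely this injectivity of $d_1^0\colon H(\op{O}/\ker\Delta)\to H(\op{O})u$, which requires carefully extending exactness of $[\Delta]$ from strict $d$-cocycles to the ``almost cocycles'' $a$ with $d(a)\in\ker\Delta\setminus\{0\}$.
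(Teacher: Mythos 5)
Your construction of the two maps, your verification that $\pi$ is an operad map via the rotational identity, and your identification of the composite with the connecting homomorphism of Lemma \ref{morlem} all agree with the paper, as does the reduction via two-out-of-three to proving that \emph{one} of the two maps is individually a quasi-isomorphism. The gap is in the step you yourself flag as the main obstacle: the vanishing of $E_2^0$, i.e.\ the injectivity of $d_1^0\colon H(\op{O}/\ker\Delta,\bar d)\to H(\op{O},d)u$, $[\bar a]\mapsto[\Delta(a)]u$. Unwinding, a class in $\ker(d_1^0)$ is represented by $a$ with $d(a)\in\ker\Delta$ and $\Delta(a)=d(e)$, and you must show $a\in\ker\Delta+d(\op{O})$, equivalently $\Delta(a)\in d(\mathrm{Im}\,\Delta)$. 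The natural correction procedure (use exactness of $[\Delta]$ to replace $e$ by $e_1$ with $\Delta(e_1)=d(g_1)$, then recurse on $e_1$, and so on) produces elements in degrees $|a|,\ |a|-2,\ |a|-4,\dots$, a strictly \emph{decreasing} sequence, since $|\Delta|=-1$ and solving $d(x)=y$ costs another degree. The hypothesis that each $\op{O}(n)$ is bounded \emph{above} therefore does not terminate it; it is the $v$-power side, where degrees increase by $2$ at each stage, that bounded-above-ness controls, which is exactly why the hypothesis meshes with $\tcc=\lhc\circ\theta$ and the left-hand map rather than with $\nhc$ and the right-hand map. The same mismatch resurfaces as an unaddressed convergence problem for your spectral sequence: in a fixed total degree the $u$-adic filtration on the cofiber is neither bounded nor complete, again because bounded-above-ness constrains the wrong end of the $u$-powers.

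The paper supplies the missing third fact on the other side, and much more cheaply: it shows the \emph{left}-hand map is surjective on homology. Given a class $[a]$ in $H(\ker\Delta,d)$, exactness of $[\Delta]$ produces a $d$-cycle $b$ with $[\Delta(b)]=[a]$; then $b\otimes v^0$ is a cycle in $\tcc(\op{O})$ (since $d(b)=0$ and $u\cdot v^0=0$) mapping to $\Delta(b)$. Combined with the injectivity forced by the composite being an isomorphism on homology, the left map is a weak equivalence, and the right map follows by two-out-of-three. If you wish to keep the cofiber strategy you would need a genuinely different argument for $\ker(d_1^0)=0$ --- in effect you would first have to establish the left-hand statement anyway, at which point the spectral sequence is redundant. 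As written, the termination claim in your final step does not hold.
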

\begin{proof}  Define the left hand map by  $c_0+c_1u^{-1}+\dots+c_nu^{-n}\mapsto \Delta(c_0)$ in each arity.  Define the right hand map by inclusion at $u^0$ in each arity.  It is straight forward to check that these are dg operad maps.

Now we assume the conditions of Lemma $\ref{morlem}$ which implies that the composition of these two maps is a weak equivalence.  We then claim the left hand map is surjective on homology at each level.  For if $[a]$ is a class in $H(\ker(\Delta),d)$ then $[\Delta(a)]=0$ implies $[a]\in Im([\Delta])$ and hence there exists $b\in\op{O}(r)$ with $db=0$ such that $[\Delta(b)]=[a]\in H(\op{O}(r))$.  Since $\Delta(b)$ is in the image of the left hand map, the claim follows.

So if we consider the sequence on line $\eqref{seq}$, the composite being a level-wise isomorphism on homology forces the left hand map to be a level-wise injection on homology. Since it is also a level-wise surjection on homology, the left hand map is a weak equivalence.  Hence the right hand map is an weak equivalence by the 2-out-of-3 property.
\end{proof}

By a {\it truncated operad} we refer to the truncation of an operad to its arity $\geq 2$ terms.

\begin{example}\label{exger}  Consider $(\mathsf{Ger},0,R)$ as a truncated operad in mixed complexes.  Then $R$ is exact on $\mathsf{Ger}$, see \cite{Geteq}.  In arity $\geq 2$, $\nhc(\mathsf{Ger})$ has cycles and boundaries:
	\begin{equation*}
	Z(\nhc(\mathsf{Ger}))= \ker(R)\tensor k[u] \ \ \ \ \ \ \text{ and } \ \ \ \ \ \ \ B(\nhc(\mathsf{Ger}))= Im(R)\tensor uk[u]
	\end{equation*}
	so $\hnhc(\mathsf{Ger})\cong Im(R)\cong \ker(R)$.  On the other hand $\lhc(\mathsf{Ger})$ has cycles and boundaries:
	\begin{equation*}
	Z(\lhc(\mathsf{Ger}))= \mathsf{Ger}\oplus (\ker(R)\tensor vk[v]) \ \ \ \ \ \ \text{ and } \ \ \ \ \ \ \ B(\lhc(\mathsf{Ger}))=  Im(R)\tensor k[v]
	\end{equation*}
	so $\Sigma \hlhc(\mathsf{Ger})\cong \Sigma \mathsf{Ger}/Im(R)$.  In particular the generators are the $n$-fold commutative products.  The corollary gives us weak equivalences of dg operads:
		\begin{equation*}
		\tcc(\mathsf{Ger})\stackrel{\sim}\longrightarrow \mathsf{Grav}\stackrel{\sim}\longrightarrow \nhc(\mathsf{Ger})
		\end{equation*}
where $\mathsf{Grav}$ is by definition the graded operad $(\ker(R),0)$ which we call the gravity operad after \cite{Geteq}.  This weak equivalence $\tcc(\mathsf{Ger})\stackrel{\sim}\to \nhc(\mathsf{Ger})$ can be interpreted as a dg version of \cite[Corollary 2.8]{Westerland}.
\end{example}

Recall that two objects in $\mxdops$ (resp.\ $\rotops$) are said to be weakly equivalent (denoted $\sim$) if they are connected by a zig-zag of levelwise quasi-isomorphisms of dg operads which preserve the rotation operator.

From the level-wise homotopy invariance of $\tcc$ and $\nhc$ we immediately see:

\begin{corollary}\label{WEtogravcor}  If $(\op{O},d,\Delta)$ in $\mxdops$ is weakly equivalent to $(\mathsf{Ger},0,R)$, then $\tcc(\op{O})\sim \mathsf{Grav}\sim\nhc(\op{O})$ are weakly equivalent dg operads.

If $(\op{O},d,\Delta)$ in $\rotops$ is weakly equivalent to $ (\mathsf{Ger},0,R)$ (viewed as a rotational operad) then $\tcc(\op{O})\sim \mathsf{Grav}$ are weakly equivalent dg operads.
\end{corollary}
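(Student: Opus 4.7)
The plan is to combine the already-computed case of Example \ref{exger}, where the chain of weak equivalences $\tcc(\mathsf{Ger})\sim\mathsf{Grav}\sim\nhc(\mathsf{Ger})$ is exhibited, with the homotopy invariance of the functors $\lhc$, $\nhc$, and $\theta$ applied level-wise. In other words, the work is in verifying that all three functors send level-wise quasi-isomorphisms to level-wise quasi-isomorphisms, and then transporting the zig-zag along them.

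First, I would handle the first statement. Suppose we are given a zig-zag of level-wise quasi-isomorphisms in $\mxdops$ connecting $(\op{O},d,\Delta)$ to $(\mathsf{Ger},0,R)$. As noted in Construction \ref{lcc}, the functors $\lhc$ and $\nhc$ preserve such weak equivalences (since on underlying $\mathbb{S}$-modules they tensor with $k[v]$ or $k[u]$ and perturb the differential by $\Delta u$; a standard filtration-by-powers-of-$u$ spectral sequence argument at each arity converts a level-wise quasi-isomorphism on the $E^0$ page into one on the abutment, working at each filtration degree). Applying $\lhc$ to the zig-zag produces a zig-zag of level-wise quasi-isomorphisms of dg operads in $\mxdops$ between $\tcc(\op{O})=\lhc(\theta(\op{O}))$ and $\tcc(\mathsf{Ger})$ (we can freely invoke $\theta$ on the $\mxdops$ side since $\iota\circ\theta(\op{O})$ has the same underlying dg $\mathbb{S}$-module as $\op{O}$ up to a suspension); similarly applying $\nhc$ connects $\nhc(\op{O})$ to $\nhc(\mathsf{Ger})$. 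The result then follows by splicing these zig-zags with the weak equivalences $\tcc(\mathsf{Ger})\sim\mathsf{Grav}\sim\nhc(\mathsf{Ger})$ established in Example \ref{exger}.

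For the second statement, the extra task is to check that $\theta\colon \rotops\to\mxdops$ preserves level-wise quasi-isomorphisms. This is immediate from its definition: on the underlying dg $\mathbb{S}$-module, $\theta(\op{O})$ is just $\Sigma^{-1}\op{O}$ with the same differential, so quasi-isomorphisms on the $\rotops$-level (which by hypothesis intertwine both $d$ and $\rho$) become quasi-isomorphisms of dg $\mathbb{S}$-modules after applying $\theta$; and the twist-gluings $a\tilde{\circ}_i b := a\circ_i \rho(b)$ are respected by any morphism which commutes with $\rho$. Thus $\tcc=\lhc\circ\theta$ preserves level-wise quasi-isomorphisms on $\rotops$, and applying it to the given zig-zag in $\rotops$ produces the required zig-zag connecting $\tcc(\op{O})$ to $\tcc(\mathsf{Ger})\sim\mathsf{Grav}$.

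There is no substantial obstacle: both statements are essentially a functoriality-and-splicing argument, and the only point to check carefully is that $\theta$, $\lhc$, and $\nhc$ preserve level-wise quasi-isomorphisms. The only mild subtlety is that weak equivalences in $\mxdops$ and $\rotops$ are defined via zig-zags of morphisms respecting $\Delta$ (respectively $\rho$), so one must verify that the intermediate objects in the zig-zag lie in the correct categories; this is built into the functorial definitions of $\theta$, $\lhc$, and $\nhc$.
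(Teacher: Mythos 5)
Your proposal is correct and follows essentially the same route as the paper, which simply invokes the level-wise homotopy invariance of $\tcc$ and $\nhc$ (noted in Construction \ref{lcc}) together with the weak equivalences $\tcc(\mathsf{Ger})\sim\mathsf{Grav}\sim\nhc(\mathsf{Ger})$ from Example \ref{exger}. Your additional verifications (the filtration argument for $\lhc$, $\nhc$ and the check that $\theta$ preserves level-wise quasi-isomorphisms) are exactly the details the paper leaves implicit.
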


\subsection{Operations on cyclic homology.}

In this section we fix a mixed complex $(A,d_A,\delta_A)$ and consider its cyclic homology as well as negative and periodic variants.  This is the same construction as Construction $\ref{lcc}$ above, except the input and output is just a mixed complex (as opposed to an operad in mixed complexes).  Let us use the same notation to denote these constructions for both algebras and operads; so explicitly we consider chain complexes
 $\lhc(A):= (A\tensor k[v], d+\delta u)$,
 $\nhc(A):= (A\tensor k[u], d+\delta u)$, and
 $\phc(A):= (A\tensor k[u,v], d+\delta u)$.

Recall that for our mixed complex $A$ we may consider $End_A$ as an $S^1$-operad or as an operad in mixed complexes $End_A^{\mathsf{mxd}}$, via $\Delta=\{\delta,-\}$.  In this section we take the latter consideration as the default.  The following lemma will allow us to study operations on cyclic cohomology:

\begin{lemma}  Let $A=(A,d,\Delta)$ be a mixed complex.
	There is an inclusion $\nhc(End_A)\hookrightarrow End_{\nhc(A)}$ in $\mxdops$.  
\end{lemma}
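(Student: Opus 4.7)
The plan is to construct the inclusion explicitly as an ``extend $k[u]$-linearly'' map and then verify compatibility with composition, differential, and rotation in turn. Given $\phi \otimes u^k \in \nhc(\End_A)(n) = \End_A(n) \otimes k[u]$, I define
\begin{equation*}
\Phi(\phi \otimes u^k)(a_1 \otimes u^{k_1}, \dots, a_n \otimes u^{k_n}) := \phi(a_1, \dots, a_n) \otimes u^{k + k_1 + \cdots + k_n}.
\end{equation*}
Since $u$ is of even degree, no Koszul signs arise from moving $\phi$ past monomials in $u$. Operadic composition is immediate: both $(\phi \otimes u^r) \circ_i (\psi \otimes u^s) = (\phi \circ_i \psi) \otimes u^{r+s}$ in $\nhc(\End_A)$ and the corresponding composition of $\Phi(\phi \otimes u^r)$ with $\Phi(\psi \otimes u^s)$ produce the same operation on $\nhc(A)$, since powers of $u$ simply accumulate additively.

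The crux is compatibility with the differential. On $\nhc(\End_A)$ the differential is $d + \Delta u$, where $\Delta = \{\delta, -\}$ is the external bracket on the $S^1$-operad $\End_A$, while on $\End_{\nhc(A)}$ the differential is induced by the total differential $d + \delta_A u$ on $\nhc(A)$ via the usual Leibniz formula $F \mapsto (d+\delta_A u) \circ F - (-1)^{|F|} F \circ (d + \delta_A u)^{(n)}$. Splitting both sides into a ``$d$-part'' and a ``$u$-part'', the $d$-part check is routine, and the $u$-part reduces precisely to the defining identity
\begin{equation*}
\{\delta, \phi\} = \delta_A \circ \phi - (-1)^{|\phi|} \sum_i \phi \circ_i \delta_A,
\end{equation*}
which measures exactly the difference between $\delta_A$ acting on the output of $\Phi(\phi \otimes u^k)$ and on each of its inputs, multiplied by an extra factor of $u$.

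The $\mxdops$-compatibility follows from the same formula: on $\nhc(\End_A)$ the rotation is the $k[u]$-linear extension of $\{\delta,-\}$, while on $X(\End_{\nhc(A)})$ it is $\{\delta_{\nhc(A)},-\}$ where $\delta_{\nhc(A)}$ is the $k[u]$-linear extension of $\delta_A$, and these agree under $\Phi$ by the identity above (without the extra factor of $u$). Injectivity is immediate: if $\Phi(\sum_k \phi_k \otimes u^k) = 0$, then evaluating on inputs of the form $a_j \otimes 1$ yields $\sum_k \phi_k(a_1,\dots,a_n) \otimes u^k = 0$ in $\nhc(A)$, forcing $\phi_k = 0$ for all $k$ by linear independence of the powers of $u$. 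The main technical step I anticipate is aligning the signs in the external bracket formula with those in the Leibniz rule for the induced differential on $\End_{\nhc(A)}$; once this is done, every claim is a direct unwinding of definitions.
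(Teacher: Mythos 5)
Your proposal is correct and follows essentially the same route as the paper: the same $u$-linear extension map, the same additive bookkeeping of powers of $u$ for operadic composition, and the same reduction of the differential check to the external-bracket identity (the paper packages this slightly more compactly by writing both differentials as $\{d_A+u\delta,-\}$, whereas you split into the $d$-part and $u$-part, but the content is identical). Your explicit verification of compatibility with the rotation operator, needed for the map to live in $\mxdops$, is a small point the paper leaves implicit, but otherwise the two arguments coincide.
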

\begin{proof}  Define a map $\psi_n$:
\begin{equation*}
Hom(A^{\tensor n},A)\tensor k[u]\stackrel{\psi_n}\to Hom(A[u]^{\tensor n},A[u])
\end{equation*}	
as the $k$-linear extension of the assignment:
\begin{equation*}
f\tensor u^r \mapsto \left[ (a_1u^{i_1}\tensor\dots\tensor a_nu^{i_n})\mapsto f(a_1\cdc a_n)u^{r+\sum_j u_{i_j}}\right]
\end{equation*}	
This map is clearly injective.  In particular, a multi-linear operation on $A[u]$ is in the image of this map if and only if it is $u$-linear and has bounded support in the codomain. We remark that the extension of this map to $Hom(A^{\tensor n},A)\hat{\tensor} k[u]$ would encompass all multi-linear operations in its image, but this intermediary operad is not needed for our purposes.  We now claim that the $\psi_n$ constitute a map of dg operads. 

Let us first check the differential.  The operad $End_A$ has an internal differential, call it $\partial$, induced by $d_A\in End_A(1)$.  Notice that it can be described via the operadic Lie bracket as $\partial(f)=\{d_A,f\}$.  Therefore the total differential on $\nhc(End_A)$ which is {\it a priori} of the form $\partial+\Delta u$, can be rewritten as $\{d_A,- \}+\{\delta, -\}u=\{d_A+u\delta,- \}$.  On the other hand, the operad $End_{\nhc(A)}$ has differential induced from the complex $(\nhc(A),d_A+u\delta)$ via the operadic Lie bracket.  Thus we again find $\{d_A+u\delta,- \}$, and so the differentials agree.

It is then an easy exercise to see that $\psi$ respects the operad compositions.  In particular let $f$ and $g$ be multi-linear operations on $A$ of arities $n$ and $m$.  Then we see that both $\psi(f\tensor u^r)\circ_l \psi(g\tensor u^s)$ and $\psi(f\tensor u^r\circ_l g\tensor u^s):= \psi(f\circ_l g\tensor u^{r+s})$ are evaluated at a pure tensor $\tensor_{j=1}^{n+m-1}a_ju^{i_j}$ by evaluating $f\circ_l g$ at $\tensor_j a_j$ and multiplying by $u$ to the power 
$(s+\sum_{j=l}^{l+m-1} i_j)+(r+\sum_{j=1}^{l-1} i_j+\sum_{j=l+m}^{n+m-1} i_j)$
in the former case and $r+s+\sum_j i_j$ in the latter; and these two expressions are equal. \end{proof}

\begin{remark}
	We have given the statement of the Lemma using the negative variant of cyclic cohomology because it will be the result we need subsequently.  However the same result can be proven for the other variants.
\end{remark}

\begin{corollary}\label{algcor2}  If $(A,d,\Delta)$ is an algebra over the $S^1$-operad $W(\op{O})$ then $\nhc(A)$ inherits the structure of an algebra over $\nhc(\op{O})$.
\end{corollary}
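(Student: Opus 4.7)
The proof is essentially formal and assembles three ingredients that have already been established in the preceding material. The plan is to translate the $W(\op{O})$-algebra structure on $A$ into a morphism of operads in mixed complexes, apply the functor $\nhc$ to this morphism, and then compose with the inclusion provided by the preceding lemma.

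More precisely, by Corollary \ref{algcor} (specifically, by the $(W,X)$ adjunction), the datum of a $W(\op{O})$-algebra structure on the mixed complex $A$ is the same as a morphism
$$\op{O} \longrightarrow X(\End_A) = \End_A^{\mathsf{mxd}}$$
in $\mxdops$. Since $\nhc \colon \mxdops \to \mxdops$ is a functor (Construction \ref{lcc}), applying it produces a morphism
$$\nhc(\op{O}) \longrightarrow \nhc(\End_A^{\mathsf{mxd}})$$
in $\mxdops$. The lemma immediately preceding this corollary gives an inclusion $\nhc(\End_A^{\mathsf{mxd}}) \hookrightarrow \End_{\nhc(A)}^{\mathsf{mxd}}$ in $\mxdops$, with the mixed complex $\nhc(A) = (A\otimes k[u], d + \Delta u, \Delta)$ as target. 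Composing yields a morphism
$$\nhc(\op{O}) \longrightarrow \End_{\nhc(A)}^{\mathsf{mxd}}$$
in $\mxdops$, which by definition is precisely an algebra structure on the mixed complex $\nhc(A)$ over the operad $\nhc(\op{O})$.

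Since every step is formal, there is no real obstacle to overcome here beyond checking that the composition lands where claimed. The only subtlety worth flagging is a consistency check: one should verify that the mixed complex structure on $\nhc(A)$ appearing as the natural target in the lemma (where $\End_{\nhc(A)}^{\mathsf{mxd}}$ is built from $\nhc(A)$ equipped with total differential $d+\Delta u$ and operator $\Delta$) agrees with the mixed complex structure dictated by Construction \ref{lcc}. This was in fact built into the proof of the lemma, where the total differential on $\nhc(\End_A)$ was identified with $\{d_A + u\delta, -\}$, matching the differential induced by the complex $(\nhc(A), d + \Delta u)$ on $\End_{\nhc(A)}$. Hence the composite is genuinely a map in $\mxdops$ and the corollary follows.
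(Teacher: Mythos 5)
Your proof is correct and follows exactly the paper's argument: pass to the adjoint map $\op{O}\to X(\End_A)$ via the $(W,X)$ adjunction, apply the functor $\nhc$, and compose with the inclusion $\nhc(\End_A)\hookrightarrow \End_{\nhc(A)}$ from the preceding lemma. The extra consistency check on the mixed complex structure of $\nhc(A)$ is a harmless elaboration of what the lemma's proof already established.
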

\begin{proof}
	Associated to the map of $S^1$-operads $W(\op{O})\to End_A$ is the adjoint map $\op{O}\to End_A$  in $\mxdops$ (suppressing the notation $X$ used above).  Taking $\nhc$ of this map and applying the lemma we have $\nhc(\op{O})\to \nhc(End_A)\hookrightarrow End_{\nhc(A)}$.
\end{proof}

\begin{example}\label{gravfrombvex}  If $A$ is a $\mathsf{BV}$-algebra, then $\nhc(A)$ inherits the structure of a gravity algebra via the sequence
	\begin{equation}
	\mathsf{Grav} \stackrel{\sim}\to \nhc(\mathsf{Ger})\hookrightarrow End_{\nhc(A)}
	\end{equation}	
	after Example $\ref{Wgerex}$.  
	
	More generally, combining this example with Example $\ref{exger}$ above we see that if $A$ is a $\mathsf{BV}$-algebra, there is a sequence of (truncated) dg operads:
	\begin{equation}
	\mathsf{Grav}_\infty \stackrel{\sim}\to \tcc(\mathsf{Ger})\stackrel{\sim}\to \nhc(\mathsf{Ger})\hookrightarrow End_{\nhc(A)}
	\end{equation}
	
\end{example}

We will use this construction in the following section to associate a gravity algebra to the poly-vector fields of an oriented manifold.

\section{Formality, cyclic formality, and gravity structures.}\label{sec2}  In this section we recall the statement of Kontsevich's formality theorem \cite{Kont99, Kontsevich} and the cyclic variant of the theorem due to Willwacher \cite{WC}.  We also apply our work from Section $\ref{secmixedcpx}$ to establish the $\mathsf{Grav}_\infty$ structures on the respective sides of the cyclic formality theorem that will be the subject of our results in subsequent sections.
  
    In this section we fix an oriented manifold $M$ of dimension $d$ and equip it with a fixed volume form $\omega$.  In this section and beyond we take our ground field to be the real numbers.

\subsection{Multivector Fields}

The graded vector space $\Tpoly(M)$, or just $\Tpoly$, of multivector fields on $M$ is $$\Tpoly^\bullet = \Gamma(M, {\bigwedge}^\bullet T_M),$$
where $T_M$ is the tangent bundle of $M$.  This space is naturally a $\sLie$ algebra with Lie bracket given by the Schouten-Nijenhuis bracket.  It is also a graded commutative algebra under the exterior product and these operations combine to make $\Tpoly$ a Gerstenhaber algebra. 

We now define a map $f\colon \Tpoly^\bullet(M) \to \Omega^{d-\bullet}_{dR}(M)$ that sends a multivector field to its contraction with the volume form of $M$.  This map is easily checked to be an isomorphism of vector spaces. We define the divergence operator $\Div$ to be the pullback of the de Rham differential via $f$, i.e. $\Div:=f^{-1}\circ d_{dR}\circ f$.  The square zero operator $\Div$ combines with the Gerstenhaber structure to make $\Tpoly$ a BV algebra.  

Our work in Section $\ref{secmixedcpx}$, namely Example $\ref{gravfrombvex}$, assigns to the complex $(\Tpoly[u],u\Div)$ the structure of a dg gravity algebra.  Explicit formulas for this structure can be given as the $u$-linear extension of those given in \cite[Lemma 4.4]{Geteq}.  In particular this complex is a dg $\sLie$ algebra whose bracket is the $u$-linear extension, $[Xu^k,Yu^l] := [X,Y]u^{k+l}$.

\subsection{Multidifferential Operators}
In this section we describe the differential graded Lie algebra of multidifferential operators of $M$, denoted by $\Dpoly(M)$ or just $\Dpoly$. We do an operadic construction which is less standard but allows us to introduce notation that suits our needs better.

Consider the endomorphism operad $\End(C_c^\infty(M)) =\Hom(C^\infty_c(M)^{\otimes \bullet}, C_c^\infty(M))$ on the algebra of compactly supported smooth functions on $M$, concentrated in degree zero.  We define $\OpDpoly\subset \End(C_c^\infty(M))$ to be the suboperad given by endomorphisms that vanish on constant functions\footnote{Some authors call this space the ``normalized cochains'' or ``normalized multidifferential operators'' due to this condition.} and that can be locally expressed in the form 
\begin{equation*}
\sum f\frac{\partial}{\partial x_{I_1}}\otimes \dots\otimes \frac{\partial}{\partial x_{I_n}}
\end{equation*}
where the $I_j$ are finite sequences of indices between $1$ and $dim(M)$ and $\partial/\partial x_{I_j}$ is the multi-index notation representing the composition of partial derivatives.

Associated to $\OpDpoly$ is the graded vector space $\TDpoly = \bigoplus_n\Sigma\mathfrak{s}\OpDpoly(n)$ (graded internally so that arity $n$ operators are of concentrated in degree $n$) which inherits a natural graded $\mathfrak{s}^{-1}\Lie$ algebra structure from the symmetrization of the total composition maps
\begin{equation*}
D \circ D' = \sum_{i=1}^{|D|} (-1)^{(i-1)(|D^\prime|-1)}D \circ_i D'.
\end{equation*}

The product $\mu\in \OpDpoly(2)$ of compactly supported functions is associative. This can be rewritten as $[\mu,\mu]=0$ which amounts to saying that $\mu$ is a Maurer-Cartan element of the shifted Lie algebra $\TDpoly$.  The differential graded Lie algebra of multidifferential operators $\Dpoly$ is defined to be $\TDpoly^\mu:= (\TDpoly, [\mu,-])$, the twist of $\TDpoly$ by the Maurer-Cartan element $\mu$. \\

The Hochschild-Kostant-Rosenberg map is a quasi-isomorphism of cochain complexes $\Tpoly \to \Dpoly$ that is not compatible with the Lie algebra structures. Kontsevich's result states that the obstructions of the Hochschild-Kostant-Rosenberg map to commute with the Lie algebra structure are homotopically trivial.

\begin{theorem}[Kontsevich Formality]
	There exists a homotopy $\mathfrak{s}^{-1}\Lie_\infty$ quasi-isomorphism
	$$\mathcal U\colon \Tpoly\to \Dpoly$$
	extending the Hochschild-Kostant-Rosenberg map.
\end{theorem}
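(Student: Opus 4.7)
The plan is to follow Kontsevich's original strategy, which constructs the components $\mathcal{U}_n\colon \wedge^n \Tpoly \to \Dpoly$ explicitly, first over affine space and then globalizes to arbitrary manifolds. I would begin with $M = \mathbb{R}^d$, where the $n$-th component is defined as a sum over a certain class of admissible directed graphs $\Gamma$ with $n$ ``type I'' (aerial) vertices and $m$ ``type II'' (terrestrial) vertices, $\mathcal{U}_n = \sum_{m\geq 0}\sum_{\Gamma} w_\Gamma\, B_\Gamma$, where $B_\Gamma\colon \Tpoly^{\otimes n}\to \Dpoly^{m}$ is the polydifferential operator obtained by placing multivector fields at the aerial vertices and contracting partial derivatives along the oriented edges, and $w_\Gamma \in \mathbb{R}$ is the integral over the Fulton--MacPherson--Axelrod--Singer compactification $\overline{C}_{n,m}(\mathbb{H})$ of a product of closed 1-forms on the upper half-plane, one for each edge of $\Gamma$, each pulled back from Kontsevich's angle function $\phi\colon\mathbb{H}^2\setminus\Delta \to S^1$.

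Second, I would verify that this sum satisfies the $\mathfrak{s}^{-1}\Lie_\infty$ relation with respect to $\mu$, namely $d_\mu \mathcal{U}_n + \frac{1}{2}\sum_{I\sqcup J} \pm [\mathcal{U}_{|I|}(\gamma_I),\mathcal{U}_{|J|}(\gamma_J)] = \sum \pm \mathcal{U}_{n-1}([\gamma_i,\gamma_j],\dots)$. This is the heart of the argument and I expect it to be the main obstacle: the proof proceeds by applying Stokes' theorem to the integral $\int_{\overline{C}_{n,m}(\mathbb{H})} d(\prod d\phi_e)$ and identifying the boundary strata of codimension one with the terms of the $L_\infty$-relation. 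The nontrivial input is that all ``bad'' boundary contributions vanish --- contributions from collisions of three or more points in the bulk vanish by Kontsevich's vanishing lemma (a degree count using $\dim S^1 = 1$), collisions touching the real line produce the terms on the right-hand side, and separations into two clusters produce the Lie bracket terms on the left.

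Third, I would check the normalization: the aerial vertex contribution with no edges recovers the HKR map $\mathcal{U}_1(\xi_1\wedge\dots\wedge\xi_n) = \frac{1}{n!}\sum_\sigma \mathrm{sgn}(\sigma)\, \xi_{\sigma(1)}\otimes\cdots\otimes\xi_{\sigma(n)}$, since for $n=1$ the only admissible graph has a single aerial vertex and trivial weight. One then verifies that $\mathcal{U}_1$ is a quasi-isomorphism by the classical HKR theorem for smooth affine algebras, and hence $\mathcal{U}$ is an $L_\infty$-quasi-isomorphism.

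Finally, to upgrade from $\mathbb{R}^d$ to a general oriented manifold $M$, I would use formal geometry à la Gelfand--Kazhdan/Fedosov: the bundle of formal coordinate systems $M^{\mathrm{coor}}\to M$ is a principal bundle for the group of formal diffeomorphisms, and the $\mathbb{R}^d$-formality morphism extends to a $GL(d)$-equivariant, $W_d$-equivariant (Lie algebra of formal vector fields) $L_\infty$-morphism between the fiberwise versions. Descent then requires constructing a Fedosov-type flat connection on the bundle of jets of $\Tpoly$ and of $\Dpoly$ whose flat sections recover the original sheaves; since the $\mathbb{R}^d$ morphism is equivariant, it descends to a global $L_\infty$-quasi-isomorphism. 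The only subtlety is choice of torsion-free connection used for Fedosov resolution, which only affects the answer up to homotopy.
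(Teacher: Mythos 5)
The paper does not prove this theorem; it recalls it as Kontsevich's classical result, citing \cite{Kont99, Kontsevich}. Your outline faithfully reproduces Kontsevich's original argument (graph expansion with weights from angle-form integrals over compactified configuration spaces of the upper half-plane, Stokes' theorem plus the vanishing lemmas to verify the $L_\infty$-relations, HKR as the first Taylor coefficient, and Gelfand--Kazhdan/Fedosov globalization), which is exactly the approach the paper relies on and later adapts in Sections \ref{sec4} and \ref{sec5} for the cyclic and gravity refinements.
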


We will now describe an action of the group $\mathbb Z_{n+1} = \langle\sigma_n | \sigma_n^{n+1} = e \rangle$ on $\OpDpoly(n)$.  Consider the natural map
$$\Hom(C^\infty_c (M)^{\otimes n} , C^\infty_c (M)) \to \Hom(C^\infty_c (M)^{\otimes n+1} , \mathbb R)$$
induced from the pairing $\int_M \colon C^\infty_c (M) \otimes C^\infty_c (M) \to \mathbb R$. The restriction of this map to $\OpDpoly(n)$ is injective and therefore $\OpDpoly(n)$ inherits the $\mathbb Z_{n+1}$ action of $\Hom(C^\infty_c (M)^{\otimes n+1} , \mathbb R)$ coming from cyclic permutation of the inputs. 

We define the cyclic multi-differential operators $\Dpoly^\sigma$ to be those multi-differential operators which are left invariant under the cyclic action.

\begin{proposition}\label{dgcyc}  The Lie algebra structure and differential of $\Dpoly$ restrict to this subspace making $\Dpoly^\sigma$ a differential graded Lie algebra.
\end{proposition}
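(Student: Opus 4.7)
The plan is to reduce the proposition to two independent claims: first, that the product $\mu \in \OpDpoly(2)$ is itself cyclic-invariant, so $\mu \in \Dpoly^\sigma$; and second, that the Gerstenhaber bracket sends pairs of cyclic-invariant operators to a cyclic-invariant operator. Granting these, the Hochschild differential $[\mu,-]$ automatically restricts to $\Dpoly^\sigma$, and the dg Lie axioms (graded Jacobi, Leibniz, and $d^2=[\mu,[\mu,-]]=0$) descend for free from the ambient structure on $\Dpoly$. The first claim is immediate: through the injection $\OpDpoly(2) \hookrightarrow \Hom(C^\infty_c(M)^{\otimes 3}, \R)$, the operator $\mu$ corresponds to the functional $(f_1,f_2,f_3)\mapsto \int_M f_1 f_2 f_3$, which is manifestly invariant under cyclic permutation by commutativity of pointwise multiplication.

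The core of the argument is the second claim. My approach is to establish, via the embedding $\OpDpoly(n) \hookrightarrow \Hom(C^\infty_c(M)^{\otimes (n+1)}, \R)$ combined with integration by parts (legitimate since the operators are multidifferential and all functions are compactly supported), the interaction between the cyclic generators $\sigma_{n}$ and the partial compositions $\circ_i$. The expected identities are of the form $\sigma_{m+n-1}(D \circ_i D') = D \circ_{i+1} D'$ for $1 \le i < m$, together with a boundary identity at $i=m$ expressing $\sigma_{m+n-1}(D \circ_m D')$ in terms of $\sigma_m(D)$ and $\sigma_n(D')$ with appropriate Koszul signs. These are precisely the relations making $\OpDpoly$ a cyclic operad in the sense of Getzler--Kapranov. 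Once these identities are in hand, summing over $i$ with the signs defining $D \circ D' = \sum_i (-1)^{(i-1)(|D'|-1)} D \circ_i D'$ and then antisymmetrizing into $[D,D']$ causes the boundary contributions to telescope, leaving an expression that is manifestly cyclic-invariant when $D$ and $D'$ are.

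The principal obstacle I anticipate is the careful bookkeeping of signs: one must track both the Koszul signs in the symmetrized composition and the degree-dependent sign built into the cyclic action on $\OpDpoly(n)$, and verify that these conspire to cancel as required. A cleaner alternative is to simply invoke that $\OpDpoly$ is a cyclic operad, a fact which is essentially classical and follows directly from the symmetry of the pairing $\int_M$; the proposition is then a standard consequence of the general principle that the Gerstenhaber bracket on any cyclic operad restricts to a Lie bracket on the subspace of cyclic invariants, and one only separately needs to check the (trivial) invariance of $\mu$ to conclude that the Hochschild differential restricts as well.
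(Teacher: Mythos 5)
Your proposal is correct and follows essentially the same route as the paper: the paper does not give a separate proof but justifies the proposition in the remark immediately following it, by observing that $\OpDpoly$ is a cyclic operad (via the pairing $\int_M$ and integration by parts), that $\mu$ is a cyclic-invariant Maurer--Cartan element, and that the statement holds for the cyclic deformation complex of any cyclic operad (citing \cite{Ward2}). Your ``cleaner alternative'' in the last paragraph is precisely the paper's argument, and your more detailed sketch is just an unpacking of the same idea.
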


Similarly to the non-cyclic setting, there is a cyclic Hochschild-Kostant-Rosenberg map \cite{Shoikhet} $\Tpoly[u]\to \Dpoly^\sigma$ which is a quasi-isomorphism of chain complexes. Willwacher's result shows that also in the cyclic setting this map can be made compatible with the Lie bracket up to homotopy.

\begin{theorem}[Willwacher's Cyclic Formality]
	There exists a homotopy $\mathfrak{s}^{-1}\Lie_\infty$ quasi-isomorphism
	$$\mathcal U^{\rm cyc} \colon \Tpoly[u]\to \Dpoly^\sigma$$
	extending the cyclic Hochschild-Kostant-Rosenberg map.
\end{theorem}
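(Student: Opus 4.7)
The plan is to construct $\mathcal{U}^{\rm cyc}$ directly via an adaptation of Kontsevich's graph-integral formalism to Shoikhet-style framed configuration spaces. Concretely, I would take $\mathcal{U}^{\rm cyc}_n \colon (\Tpoly[u])^{\otimes n} \to \Dpoly^\sigma$ to be a sum over admissible Kontsevich-type graphs $\Gamma$ with $n$ type-I vertices in the open upper half-plane and cyclically ordered type-II vertices on $\mathbb{R}$:
$$\mathcal{U}^{\rm cyc}_n(\gamma_1 u^{k_1},\dots,\gamma_n u^{k_n}) \; = \; \sum_\Gamma W_\Gamma(k_1,\dots,k_n)\, B_\Gamma(\gamma_1,\dots,\gamma_n),$$
where $B_\Gamma$ is Kontsevich's multi-differential operator associated to $\Gamma$ and $W_\Gamma$ is an integral over the compactified framed configuration space of a product of logarithmic angle one-forms together with $k_i$-th powers of the $S^1$-Euler class attached to the framed type-I vertex $v_i$. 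Cyclic $\mathbb{Z}_{m+1}$-invariance of the output is built in through the symmetry of the boundary configuration and integrand; checking that $\mathcal{U}_1^{\rm cyc}$ recovers the cyclic HKR map is a direct computation on the trivial two-vertex graph.

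The $\mathsf{Lie}_\infty$ relations are then verified by applying Stokes' theorem to each $W_\Gamma$ and enumerating codimension-one strata of the compactified configuration space. These fall into three classes: (i) two type-I points colliding, producing a Schouten--Nijenhuis bracket contribution on the source; (ii) a subset of points (possibly including type-II vertices) collapsing to a single point on the real axis, producing a Hochschild/Gerstenhaber bracket contribution on the target and hence the Lie bracket in $\Dpoly^\sigma$; and (iii) the new boundary stratum specific to the framed setting, where a collection of type-I points collapses together in the interior or escapes through the boundary circle, contributing the $u\Div$ term on the source after integration along the framing fibre. Vanishing of all remaining strata should follow from a Kontsevich-style dimensional argument combined with a vanishing lemma for graphs with unbalanced edge profiles.

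The main obstacle will be the precise identification of stratum (iii) with $u\Div$. In the original formality theorem the analogous stratum vanishes for dimensional reasons, but here the Euler-class factors associated to the $u^{k_i}$ insertions provide exactly the extra top-form degree needed to produce a nonzero residue. Showing that this residue equals precisely $u\Div$, and not some more elaborate divergence-type operator, requires a careful computation of the Euler form on the framing $S^1$-bundle together with a combinatorial identification of the sum over contributing graphs with the graph that produces a tadpole/self-loop at a single vertex, which is the graphical incarnation of the divergence. Once this identification is established, the cyclic equivariance of the integrand simultaneously guarantees that the output lies in $\Dpoly^\sigma$ and that the boundary terms from Stokes' theorem assemble into the $L_\infty$ Maurer--Cartan equation, yielding the desired $\mathfrak{s}^{-1}\Lie_\infty$ quasi-isomorphism.
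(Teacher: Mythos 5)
The paper does not actually prove this statement: it is Willwacher's theorem, recalled verbatim with a citation to \cite{WC}, and the paper's own work begins only with the extension of $\mathcal U^{\rm cyc}$ to a $\Grav_\infty$ morphism. Your proposal therefore has to be judged as a reconstruction of Willwacher's original argument and of the graph-theoretic machinery the paper later rebuilds in its sections on $\vKGra$ and the map $\omega_\bullet$. In outline your strategy is the correct one: weights given by integrals over compactified configuration spaces of points in the upper half-plane with cyclically ordered boundary points, Stokes' theorem, and an enumeration of codimension-one strata reproducing the Schouten--Nijenhuis bracket on the source and the Hochschild/Gerstenhaber structure on the target, with the first Taylor coefficient reducing to the cyclic HKR map.

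Two of your steps would fail as written, and they are exactly where the content of the theorem lives. First, the $u\Div$ term does not come from a boundary stratum. In the actual construction a decoration $u^{k}$ on a type-I vertex is matched with the $k$-th power of the two-form $d\eta_z$, where $\eta_z$ is Willwacher's \emph{non-closed} one-form attached to a tadpole; the $u\Div$ contribution then arises from the interior term $\int_X d\omega$ in Stokes' theorem, i.e.\ from the failure of the integrand to be closed, and is matched combinatorially with the differential $d(v_i)=\Gamma^{i,i}$ of $\vKGra$ together with the fact that a tadpole acts on a multivector field as its divergence. There is no ``new boundary stratum where type-I points escape through the boundary circle'' in this compactification, and the Kontsevich-style dimension count you invoke would show that the stratum you describe contributes zero --- leaving your argument with no source for the $u\Div$ term at all. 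Second, cyclic invariance of the output is not ``built in through the symmetry of the integrand'': every angle propagator is defined relative to the distinguished point at infinity and transforms nontrivially under the cyclic action (on forms, $(d\phi^{i}_{\overline{j}})^\sigma = d\phi^{i}_{\overline{j+1}} - d\phi^i_{\overline{1}}$ and $\eta_z^\sigma=\eta_z-d\phi^z_{\overline 1}$), so invariance of the total weight is a genuine computation using the identity $\sum_i\theta(z,z_{\overline{i+1}},z_{\overline i})=1$ and the precise definition of $\eta_z$; it cannot be asserted. Until these two points are supplied --- the correct mechanism producing $u\Div$ and the equivariance computation --- the proposal has real gaps, even though its architecture matches the known proof.
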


\begin{remark}\label{dgcycrmk}  The cyclic group actions combine with the operad structure to endow $\OpDpoly$ with the structure of a cyclic operad (c.f.\ \cite{GeK1}).  Proposition $\ref{dgcyc}$ actually holds for any cyclic operad  (see e.g.\ \cite{Ward2}).  We term the associated dg Lie algebra $(\op{O}^\sigma, d_\mu)$ the cyclic deformation complex of $(\op{O},\mu)$.	 The fact that $\OpDpoly$ is a cyclic operad equipped with the invariant Maurer-Cartan element $\mu$ allows us to apply \cite[Theorem C]{Ward2} to show that the dg Lie algebra structure on $\Dpoly^\sigma$ lifts to an action of the dg operad $\mathsf{M}_\circlearrowright$ of Definition $\ref{Mcircdef}$.  This will be used below to endow $\Dpoly^\sigma$ with the structure of a $\mathsf{Grav}_\infty$-algebra.  
\end{remark}

\subsection{$\mathsf{FM}_2$ and formality of $\mathsf{M}_\circlearrowright$}\label{sec: formality of little discs}
\label{sec: formality of FM2}

Let $$Conf_n(\mathbb C) = \{(x_1,\dots,x_n) \in (\mathbb C)^n | x_i \ne x_j \text{ for } i\ne j\}/\mathbb R_+ \ltimes \mathbb C$$ be the configuration space of $n$ labeled points in $\mathbb C$ modulo the action of the Lie group $\mathbb R_+ \ltimes \mathbb C$ acting by scaling and translations. Notice that $Conf_n(\mathbb C)$ is a $2n-3$ dimensional smooth manifold.

The Fulton-MacPherson topological operad $\FM_2$, introduced by Getzler and Jones \cite{Getzler-Jones} after \cite{FM} is constructed in such a way that the $n$-ary space $\FM_2(n)$ is a compactification of the $Conf_n(\mathbb C)$. The spaces $\FM_2(n)$ are manifolds with corners with each boundary stratum representing a set of points that got infinitely close.

Formally, the compactification is done by considering the closure of $Conf_n(\mathbb C)$ under the embedding $Conf_n(\mathbb C) \hookrightarrow (S^1)^{n(n-1)} \times [0,+\infty]^{n^2(n-1)^2}$ that maps every pair of points to their angle and every triple of points to their relative distances.

The first few terms are
\begin{itemize}
	\item $\FM_2(0) = \emptyset$,
	\item $\FM_2(1) = \{*\}$,
	\item $\FM_2(2) = S^1$.
\end{itemize}
The operadic composition $\circ_i$ is given by inserting a configuration at the boundary stratum at the point labeled by $i$.
For details on this construction see also \cite[Part IV]{FM} or \cite{Kont99}.

The homology of the Fulton-MacPherson operad is the Gerstenhaber operad $\Ger$ \cite{Arnold}.  The formality of this operad was established by Kontsevich with the exhibition of the following explicit zig-zag of quasi-isomorphisms.
\begin{theorem}[\cite{Kont99,discs}]
	There is a zig-zag of quasi-isomorphisms of operads
	$$\Chains(\FM_2) \to \Graphs \leftarrow \Ger.$$
\end{theorem}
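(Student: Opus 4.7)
The plan is to construct both maps explicitly and then verify each is a quasi-isomorphism, with the main content being a cohomology computation for $\Graphs$. First I would recall that $\Graphs = \Tw(\Gra)$ is obtained by operadic twisting of the operad $\Gra$ of numbered-vertex graphs without tadpoles; the twisting inserts summed-over ``internal'' vertices and equips the result with a differential $\partial$ that splits any edge incident to an internal vertex. The map $\Ger \to \Graphs$ is then defined on generators by sending the commutative product to the two-vertex edgeless graph and the degree-one Lie bracket to the two-vertex single-edge graph, extending the assignment from Example $\ref{graex}$; the Gerstenhaber relations reduce to a finite verification in $\Gra$, and compatibility with $\partial$ is automatic since these generators carry no internal vertices.

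For the map $\Chains(\FM_2) \to \Graphs$, I would use Kontsevich's integral formula. For each graph $\Gamma \in \Graphs(n)$ with $k$ internal vertices, let $\pi \colon \FM_2(n+k) \to \FM_2(n)$ be the projection that forgets the internal points. To each edge $(i,j)$ of $\Gamma$ associate the angle form $d\varphi_{ij}$ on $\FM_2(n+k)$, set $\omega_\Gamma = \bigwedge_{(i,j) \in E(\Gamma)} d\varphi_{ij}$, and push forward along the fibers of $\pi$ (compact manifolds with corners). The resulting forms on $\FM_2(n)$ pair with singular chains to yield a morphism $\Chains(\FM_2)(n) \to \Graphs(n)$ using the self-duality of graph bases in each arity. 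That this is a chain map is the content of Stokes' theorem: the codimension-one boundary strata of the fibers of $\pi$ correspond to internal points colliding with each other or with an external point, and the resulting terms match the operadic differential on $\Graphs$. The key technical input is Kontsevich's vanishing lemma, which shows that the ``hidden face'' contributions from three or more internal points colliding in the bulk vanish by a symmetry argument (an antisymmetric integrand on an even-dimensional simplex).

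The main obstacle is proving $\Ger \to \Graphs$ is a quasi-isomorphism, which is a purely combinatorial question about the cohomology of the graph complex. Once this is established, the other map follows by a two-out-of-three argument from Arnold's identification $H_*(\FM_2) \cong \Ger$ together with the fact that the composite $\Ger \to \Graphs \leftarrow \Chains(\FM_2)$ realizes the Arnold isomorphism on homology (essentially by tracking where the classes of the product and bracket go). To compute $H(\Graphs)$ I would filter by the number of internal vertices and reduce, via a K\"unneth-type decomposition, to understanding the ``connected internal part'' of the graph complex. Two elimination steps then reduce further to a tractable subcomplex: first an acyclic subcomplex removes internal vertices of valence $\leq 2$, and then a loop-order filtration reduces to Kontsevich's vanishing theorem for the cohomology of the connected graph complex, which identifies the surviving classes as precisely the images of the Gerstenhaber generators. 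This deep combinatorial input is the heart of the proof and is carried out in detail in the references \cite{Kont99, discs, homotopybracesformality}.
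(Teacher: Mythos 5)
The paper does not prove this theorem; it is quoted from \cite{Kont99,discs}, and the text only recalls the construction of the map in its dual form $\omega_\bullet\colon \Graphs^*\to\Omega(\FM_2)$ via angle forms and fiber integration over the forgotten internal points --- which is exactly the map you describe at the chain level. Your outline (Stokes' theorem plus the vanishing lemmas for the chain-map property, a direct combinatorial computation that $\Ger\to\Graphs$ is a quasi-isomorphism, and then the comparison with Arnold's computation of $H_\ast(\FM_2)$ to handle the left-hand map) is the standard argument from the cited references, so there is nothing to compare against in the paper itself. Two small cautions: the paper defines $\Graphs$ not as all of $\Tw\Gra$ but as the suboperad of graphs with no purely internal connected components and all internal vertices of valence $\geq 3$, so the eliminations you defer to the cohomology computation are here built into the definition; and the phrase ``Kontsevich's vanishing theorem for the connected graph complex'' is loose, since $H(\mathsf{GC}_2)$ is far from zero --- what one actually uses is that the purely internal components have been discarded, after which the remaining internally-connected pieces are handled degreewise and $H(\Graphs(n))$ is identified with $\Ger(n)$ via the Arnold relations. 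These are presentational points; the structure of your argument is the correct one.
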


The operad $\Graphs$ is an operad of graphs, constructed by considering the twisted operad of $\Gra$ (see section \ref{sec: Gra} for the definitions). Concretely, it is the suboperad of $\Tw \Gra$ consisting of graphs containing no connected components without external vertices and all internal vertices have valence at least 3. The construction of this operad using operadic twisting was first done in \cite{homotopybracesformality}.

We mention the technical point that the various projection maps $\FM_2(n+k)\to\FM_2(n)$ obtained by forgetting $k$ points of the configuration are not smooth fiber bundles. Since Kontsevich's construction requires integration of forms along fibers, one has to work in a semi-algebraic setting. In particular, the functor $\Chains$, used by Kontsevich is the functor of semi-algebraic chains (see \cite{HLTV} for an extensive study of this functor) and the morphism $\Chains(\FM_2) \to \Graphs$ is best constructed in the dual setting, as a map of cooperads
$$\omega_\bullet \colon \Graphs^* \to \Omega(\FM_2),$$
where $\Omega$ represents the functor of PA (piecewise-algebraic) forms.

\begin{remark}
	The functor $\Omega$ is not comonoidal since the canonical map $\Omega(A)\otimes \Omega(B) \to \Omega(A\times B)$ goes ``in the wrong direction", therefore $\Omega(\FM_2)$ is not a cooperad but still satisfies cooperad-like relations (see \cite{discs}). Nevertheless, by abuse of language throughout this paper we will refer to these spaces as cooperads and refer to maps such as $\Gra^*  \to \Omega(\FM_2)$  as maps of (colored) cooperads if they satisfy a compatibility relation such as commutativity of the following diagram:
	$$  \begin{tikzcd}
	\Graphs^*(n) \ar{r} \arrow{dd} & \Omega(\FM_2(n)) \arrow{d} \\
	& \Omega(\FM_2(n-k+1)\times \FM_2(k)) \\
	\Graphs^*(n-k+1)\otimes {} \Graphs^*(k) \ar{r} & \Omega(\FM_2(n-k+1))\otimes \Omega(\FM_2(k)). \ar{u}
	\end{tikzcd}$$
\end{remark}

To describe the map $\omega_\bullet$, first let us take $\Gamma$, a graph in $\Graphs^*(n)$ with no internal vertices. We define $$\omega_\Gamma \coloneqq \bigwedge_{(i,j) \text{ edge of }\Gamma} d\phi_{i,j} \in \Omega(\FM_2(n)),$$ 
where $d\phi_{i,j} = p_{i,j}^*(\vol_{S^1})$\footnote{Notice that $d\phi_{i,j}$ is not an exact form, since the angle $\phi_{i,j}$ is only well defined up to a constant.} is the pullback of the volume form of the circle via the projection map $p_{i,j}\colon \Omega(\FM_2(n)) \to \Omega(\FM_2(2)) = \Omega(S^{1})$.

If the graph $\Gamma \in \Graphs^*(n)$ contains $k$ internal vertices, one can construct a graph $\Gamma'\in \Graphs^*(n+k)$ by replacing all internal vertices by external vertices labeled in some way from $n+1$ to $n+k$. The map $\omega_\bullet \colon \Graphs^*(n) \to \Omega(\FM_2(n))$ is defined by sending $\Gamma$ to $\int_{n+1,\dots,n+k}\omega_{\Gamma'}$, where the integral runs over all possible configuration of the points that correspond to the internal vertices. 

Notice that the induced composition map $\Gra^* \to \Graphs^* \to \Omega(\FM_2)$ is just the map of commutative algebras defined by sending the edge connecting vertices $i$ and $j$ to $\phi_{i,j}$.

\begin{remark}
	The operad $\FM_2$ can be directly related to a shifted version of the homotopy Lie operad via the operad morphism
	
	\begin{equation*}
	\sLie_\infty \to \Chains(\FM_2),
	\end{equation*}
	given by sending the generator $\mu_n \in \sLie_\infty$ to the fundamental chain of $\FM_2(n)$\footnote{Note that due to our cohomological conventions the generator $\mu_n\in \sLie_\infty$ has degree $(1-n)+(2-n)=3-2n$ as desired.}. This is essentially lifting the formality zig-zag for $\sLie_\infty\subset \Ger_\infty$.
\end{remark}

The action of $S^1$ on $\FM_2$ allows us to consider the operad  $(\Chains(\FM_2),d,\Delta)\in \mxdops$.

\begin{theorem}\label{prop:formality morphism}  The operads $\mathcal{D}_2$ and $\FM_2$ are formal as operads in mixed complexes.  Explicitly this means there exist zig-zags of quasi-isomorphisms in $\mxdops$ connecting $$(S_\ast(\mathcal D_2;\mathbb{R}),d,\Delta)\sim (S_\ast(\FM_2;\mathbb{R}),d,\Delta) \sim (\mathsf{Ger},0,R).$$

\end{theorem}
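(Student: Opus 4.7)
The plan is to derive the result from the Giansiracusa-Salvatore formality of the framed little disks operad over $\mathbb R$ \cite{GS10}. First I would dispatch the equivalence $(S_\ast(\mathcal D_2),d,\Delta)\sim(S_\ast(\FM_2),d,\Delta)$ in $\mxdops$: this follows from the standard $S^1$-equivariant weak equivalence of topological operads between $\mathcal D_2$ and $\FM_2$, obtained by choosing the compactification/projection maps to intertwine the rotation of configurations. Passage to singular chains preserves the mixed complex structure, so the core content reduces to the second equivalence, $(S_\ast(\FM_2),d,\Delta)\sim(\mathsf{Ger},0,R)$.

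The key input is the zig-zag of dg operad quasi-isomorphisms $C_\ast(f\mathcal D_2;\mathbb R)\sim\mathsf{BV}$ from \cite{GS10}. Both sides carry a distinguished arity-1 class: the fundamental class of $S^1\subset f\mathcal D_2(1)$ on one side, the BV operator $\Delta_{\mathsf{BV}}$ on the other. The formality zig-zag can be arranged to preserve this data, so it is a formality statement in $S^1$-$\op{O}ps$. Under the identification $\mathsf{BV}=W(\mathsf{Ger})$ from Example \ref{Wgerex}, together with a parallel description of $C_\ast(f\mathcal D_2)$ as (morally) $W$ applied to $(S_\ast(\mathcal D_2),d,\Delta)$ — encoding the topological semidirect product $f\mathcal D_2\simeq \mathcal D_2\rtimes S^1$ via the adjoined $\delta$ — the $S^1$-operadic formality of $f\mathcal D_2$ is the image under $W$ of the formality statement we wish to prove in $\mxdops$.

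To descend along this adjunction I would construct an explicit graph-complex intermediary in $\mxdops$ through which the zig-zag factors. A natural candidate is a variant of $\mathsf{Graphs}$ allowing tadpoles (in the spirit of $\mathsf{vKGra}$ from Section $\ref{sec: Gra}$), in which the $\Delta$-operator is realized combinatorially via the bracket $\{\tadp,-\}$ as in Example \ref{graex}. Kontsevich's integration map $\mathsf{Graphs}^\ast\to\Omega(\FM_2)$ should then enhance to respect the mixed complex structure, using the $S^1$-invariance of the forms $d\phi_{i,j}$ pulled back from $\FM_2(2)\simeq S^1$. The principal obstacle is verifying that this enhanced map intertwines the $\Delta$-operator on chains (induced from the rotation of configurations) with the tadpole-bracket on the graph side, and simultaneously identifies $R=\{\Delta_{\mathsf{BV}},-\}$ on the $\mathsf{Ger}$-subcomplex with the corresponding combinatorial operator. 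Once this intertwining is established, restricting the zig-zag to the subcomplexes modeling the unframed sub-operad $\mathcal D_2\hookrightarrow f\mathcal D_2$ produces the required weak equivalence $(S_\ast(\FM_2),d,\Delta)\sim(\mathsf{Ger},0,R)$ in $\mxdops$.
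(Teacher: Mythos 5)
Your core plan coincides with the paper's proof: both reduce to showing that Kontsevich's graph zig-zag $\Chains(\FM_2)\to\Graphs\leftarrow\Ger$ lifts to $\mxdops$, with the $\Delta$-compatibility of the integration map coming from the $S^1$-equivariance of the angle forms. What you flag as the ``principal obstacle'' --- that the rotation of configurations is intertwined with the combinatorial add-an-edge operator $\{\delta,-\}$ on graphs --- is precisely the content of \cite[Lemma 3.1]{GS10}, which the paper cites rather than reproves; the remaining check, that $\Ger\to\Graphs$ respects $\Delta$, is a one-line verification on generators. The paper also inserts the zig-zag $\Chains(-)\leftarrow S^{PA}_\ast(-)\to S_\ast(-)$ of \cite{HLTV} to pass between singular and semi-algebraic chains, a point you elide but which is needed since the integration map is only defined on semi-algebraic chains.

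Where your proposal diverges is the framing via the framed operad and the $(W,X)$ adjunction, and this step does not do the work you want it to. Knowing that $W(\op{O})\sim W(\op{O}')$ in $S^1$-$\op{O}ps$ does not let you conclude $\op{O}\sim\op{O}'$ in $\mxdops$: $W$ is a left adjoint, so it preserves neither fibrations nor the property of reflecting weak equivalences in any automatic way, and ``descending along the adjunction'' would require an additional argument you do not supply. Moreover $C_\ast(f\mathcal{D}_2)$ is only ``morally'' $W(S_\ast(\mathcal{D}_2),d,\Delta)$, as you yourself note; the identification holds on homology ($\BV=W(\Ger)$) but not on the chain level without further work. In the end your proof abandons this route and constructs the mixed-complex zig-zag directly through a graph intermediary --- which is exactly the paper's argument --- so the framed/adjunction layer is a detour that contributes nothing logically and should be cut. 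With that layer removed and the intertwining supplied by the citation to \cite{GS10}, your argument is the paper's.
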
 

 \begin{proof}

	Recall that the usual proof of the homotopy equivalence of $\mathcal D_2$ and $\FM_2$ \cite[Chapter 4]{FresseBook} makes use of the Boardman-Vogt W-construction to construct the following zig-zag of homotopy equivalences 
	$$\mathcal D_2 \stackrel{\sim}\leftarrow W(\FM_2) \stackrel{\sim}\to \FM_2.$$
	One readily notices that for a fixed arity both maps preserve the natural $S^1$ actions on the three topological spaces, from which it follows that $\mathcal D_2$ and $\FM_2$ are homotopy equivalent as $S^1$ operads. From the functoriality of the semi-direct product of a topological group with a topological operad it also follows \cite{GS10} that the framed versions $\mathcal D_2^{fr}$ and $\FM_2^{fr}$ are homotopy equivalent topological operads.
	
	At the algebraic level we obtain that $(S_\ast(\mathcal{D}_2),d,\Delta)\sim (S_\ast(\FM_2),d,\Delta)$, since both $\Delta$ operators are given by the composition with the unary framed element.
	
	Recall from \cite{HLTV}, that the equivalence between the functor of singular chains and the one of semi-algebraic chains is given by a zig-zag of natural quasi-isomorphisms
	$$\Chains(-) \stackrel{\sim}\leftarrow S^{PA}_\ast (-)\stackrel{\sim}\to S_\ast(-),$$
where $S^{PA}_p (X)=\{\sigma \colon \Delta^p \to X \ |\ \sigma \text{ is a semi-algebraic map}\}$. Both maps are easily seen to be compatible with the mixed complex structure, and therefore  	$(S_\ast(\mathcal{D}_2),d,\Delta)\sim (\Chains(\FM_2),d,\Delta)$. 

Kontsevich's quasi-isomorphism of operads $\Chains(\FM_2) \to \Graphs$ is compatible with the mixed complex structure, as shown in \cite[Lemma 3.1]{GS10}. It remains to see that the map $\Ger \to \Graphs$ is also compatible with the mixed complex structure. It suffices to check this statement on generators, where it is clear since $\Delta$ sends the graph with no edges in $\Graphs(2)$ to the graph contaning only two external vertices and an edge connecting them.	
\end{proof}

As a corollary to this theorem we can relate the rotational operads discussed above in Examples $\ref{Gerex}$ and $\ref{minex}$.  The proof follows immediately from the Theorem and Proposition $\ref{eqprop}$.

\begin{corollary}\label{eqcor}  There is an equivalence of rotational operads $(\mathsf{M},d,R)\sim (\mathsf{Ger}, 0, R)$.
\end{corollary}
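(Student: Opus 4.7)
The plan is to simply splice together the two equivalences already established, interpreted in the common category $\rotops$ of rotational operads. First I would invoke Proposition \ref{eqprop}, which gives a zig-zag of quasi-isomorphisms in $\rotops$ connecting $(\mathsf{M},d,R)$ to $(S_\ast(\op{D}_2),d,\Delta)$. Next I would invoke Theorem \ref{prop:formality morphism}, which gives a zig-zag of level-wise quasi-isomorphisms in $\mxdops$ connecting $(S_\ast(\op{D}_2),d,\Delta)$ to $(\mathsf{Ger},0,R)$.

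To concatenate these two zig-zags they must live in the same category. I would therefore apply the forgetful functor $\iota\colon\mxdops\to\rotops$ to the second zig-zag. This is permissible because any morphism in $\mxdops$ is in particular a map of dg operads commuting with the degree $-1$ operator $\Delta$, which under $\iota$ is exactly the datum required of a morphism of rotational operads; and because a level-wise quasi-isomorphism in $\mxdops$ is manifestly a level-wise quasi-isomorphism of the underlying dg operads. Hence the second zig-zag descends to a zig-zag of weak equivalences in $\rotops$.

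Splicing the two zig-zags at their common vertex $(S_\ast(\op{D}_2),d,\Delta)$ produces the desired weak equivalence of rotational operads $(\mathsf{M},d,R)\sim (\mathsf{Ger},0,R)$. There is no genuine obstacle to overcome: the analytic and topological content has been absorbed into Proposition \ref{eqprop} (the comparison of the minimal operad with spineless cacti and hence with singular chains on $\op{D}_2$) and into Theorem \ref{prop:formality morphism} (Kontsevich-style formality enhanced to the mixed complex structure via \cite{GS10}). The corollary is merely the statement that these two equivalences can be composed in the appropriate category, once one notes the compatibility of $\iota$ with level-wise quasi-isomorphisms.
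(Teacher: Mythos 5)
Your proposal is correct and is exactly the paper's argument: the paper states that the corollary "follows immediately from the Theorem and Proposition \ref{eqprop}," i.e.\ by splicing the zig-zag of Proposition \ref{eqprop} with that of Theorem \ref{prop:formality morphism}, viewed in $\rotops$ via $\iota$. Your explicit remark that level-wise quasi-isomorphisms in $\mxdops$ descend to weak equivalences of rotational operads is the only content, and it is the same observation the paper leaves implicit.
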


Combining this corollary with our work in Section $\ref{secmixedcpx}$ yields the following result:

\begin{theorem}  The operad $\mathsf{M}_\circlearrowright$ of \cite{Ward2} is weakly equivalent to the gravity operad. 
\end{theorem}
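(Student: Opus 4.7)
The strategy is to combine the formality of $\mathsf{M}$ as a rotational operad (Corollary $\ref{eqcor}$) with the cyclic homology machinery of Section $\ref{secmixedcpx}$, exploiting the definition $\mathsf{M}_\circlearrowright = \im(R)$. First I would invoke Corollary $\ref{eqcor}$ to obtain a zig-zag of weak equivalences of rotational operads $(\mathsf{M}, d, R) \sim (\Ger, 0, R)$. Since $\tcc$ preserves level-wise weak equivalences (Construction $\ref{lcc}$), the second statement of Corollary $\ref{WEtogravcor}$ then yields $\tcc(\mathsf{M}) \sim \Grav$ as dg operads; this provides one end of the desired chain of equivalences.

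Next, I would exhibit a morphism of dg operads $\phi\colon \tcc(\mathsf{M}) \to \mathsf{M}_\circlearrowright$ given on components by $\sum_{k \geq 0} c_k v^k \mapsto R(c_0)$ (up to a sign depending on degree). That $\phi$ is a map of operads is a direct consequence of the rotational axiom $R(a \circ_i R(b)) = R(a) \circ_i R(b)$, which matches the twist gluings on $\tcc(\mathsf{M}) = \lhc(\theta(\mathsf{M}))$ with the standard composition inherited by $\mathsf{M}_\circlearrowright \subseteq \mathsf{M}$ after applying $\phi$; compatibility with the total differential $d + Ru$ follows from $dR + Rd = 0$ together with $R^2 = 0$. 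This morphism is the rotational analog of the first map in Corollary $\ref{wecor}$.

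To conclude it remains to verify that $\phi$ is a quasi-isomorphism. For $(\Ger, 0, R)$, Example $\ref{exger}$ shows that the analogous map $\tcc(\Ger) \to \Grav$ is a quasi-isomorphism via Corollary $\ref{wecor}$, using exactness of $R$ on $\Ger$ after \cite{Geteq}. The zig-zag of Corollary $\ref{eqcor}$ transfers this exactness to $[R]$ on $H(\mathsf{M}, d)$, so that an analog of Corollary $\ref{wecor}$ --- adapted to the rotational setting using the factorization $\theta(\op{P}) \stackrel{\rho}\to \im(\rho) \hookrightarrow \ker(\rho)$ from the lemma following Construction $\ref{twistconst}$ --- applies. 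Combining with the homology calculation $H(\mathsf{M}_\circlearrowright) \cong \Grav$ from \cite{Ward2}, the second ingredient mentioned in the introduction, shows that $\phi$ induces an isomorphism on homology, giving the desired weak equivalence $\mathsf{M}_\circlearrowright \sim \Grav$.

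The main obstacle is this final verification. Since $\mathsf{M}$ is not itself an operad in mixed complexes (the operator $R$ fails to be a derivation of the composition), Corollary $\ref{wecor}$ does not apply directly to $\mathsf{M}$. One must adapt its proof to the rotational setting, using the factorization of $\theta^{-1}$, and carefully combine the transferred exactness of $[R]$ with the explicit homology calculations of \cite{Ward2} to conclude that $\phi$ is a level-wise quasi-isomorphism.
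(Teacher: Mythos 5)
Your construction matches the paper's proof exactly up through the definition of the map: the paper likewise combines Corollary $\ref{eqcor}$ with the second statement of Corollary $\ref{WEtogravcor}$ to obtain $\tcc(\mathsf{M},R)\sim\mathsf{Grav}$ as truncated dg operads, and then defines the very same morphism $\tcc(\mathsf{M},R)\to\mathsf{M}_\circlearrowright\subset\mathsf{M}$ by applying $R$ and sending $v\mapsto 0$. Where you diverge is the final verification, which you yourself flag as the main obstacle and propose to settle by adapting Corollary $\ref{wecor}$ to the rotational setting. The paper sidesteps this entirely with a much more direct argument: $H(\tcc(\mathsf{M},R))\cong\mathsf{Grav}$ is generated as an operad by the classes of the commutative products (this comes out of the weak equivalence with $\tcc(\mathsf{Ger})$ and Example $\ref{exger}$), while $H(\mathsf{M}_\circlearrowright)\cong\mathsf{Grav}$ by the homology computation of \cite{Ward2}; a morphism of dg operads between two operads with the same homology that takes generators to generators is surjective on homology, and surjectivity between isomorphic spaces of finite type in each arity and degree forces it to be an isomorphism. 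This one-line argument is what closes the proof, and you already have both of its inputs in hand. If you were to pursue your proposed adaptation instead, note one wrinkle: the first map of Corollary $\ref{wecor}$ naturally has target $\ker(\rho)$, whereas $\mathsf{M}_\circlearrowright$ is by definition $\im(R)$, so concluding that $\tcc(\mathsf{M})\to\im(R)$ (rather than $\to\ker(R)$) is a quasi-isomorphism would require an additional comparison of $\im(R)$ with $\ker(R)$ inside $\mathsf{M}$ --- in effect re-deriving the homology computation of \cite{Ward2} --- so the detour buys nothing over the generators argument.
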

\begin{proof}  By Corollary $\ref{eqcor}$ we know $(\mathsf{M},d,R)\sim (\mathsf{Ger},0,R)$ in $\rotops$.  Applying Corollary $\ref{WEtogravcor}$ to this, we find  $\tcc(\mathsf{M},R)\sim \mathsf{Grav}$ as truncated dg operads, where the commutative products generate the gravity operations.
	
	We then define a map or truncated operads $\tcc(\mathsf{M},R)\stackrel{\sim}\to \mathsf{M}_\circlearrowright\subset \mathsf{M}$ by $R$ (with $v\mapsto 0$).  This is a morphism of operads with the same homologies, and on homology it takes generators to generators so it's a quasi-isomorphism.\end{proof}

\begin{remark}  This proof works over $\mathbb{R}$.  For other fields of characteristic $0$ we can prove this result by appealing to the formality of the gravity operad along with Proposition $\ref{eqprop}$.  The analog of Theorem $\ref{prop:formality morphism}$ in the non real case is an open problem.	
\end{remark}

We recall from \cite[Theorem C]{Ward2} that the dg operad $\mathsf{M}_\circlearrowright$ acts on the cyclic deformation complex of any cyclic operad.  This action extends the Lie algebra structure discussed above (Remark $\ref{dgcycrmk}$), is compatible with the action of $\mathsf{M}$ on the (non-cyclic) deformation complex, and recovers the expected gravity structure on the homology of this complex.  We choose a weak equivalence $\mathsf{Grav}_\infty\stackrel{\sim}\to\mathsf{M}_\circlearrowright$, whose existence is guaranteed by the Theorem and then define:

\begin{definition}\label{seq1}  If $\op{O}$ is a cyclic operad with associated MC element $\mu$, we define a $\mathsf{Grav}_\infty$ structure on cyclic deformation complex $(\op{O}^\sigma, d_\mu)$ via
$\mathsf{Grav}_\infty\stackrel{\sim}\to\mathsf{M}_\circlearrowright \to End_{\op{O}^\sigma}$.  In particular, in the case $\op{O}=\OpDpoly$ this defines a $\mathsf{Grav}_\infty$ structure on $D^{\sigma}_{poly}$.
\end{definition}

\section{Cyclic Swiss Cheese type operads}\label{sec3}

In this Section we introduce the 2-colored operads that we will work with throughout the paper. They all have a compatible cyclic structure encoded by the following definition.

\begin{definition}
	Let $\cP$ be a 2-colored operad that is non-symmetric in color 2. We say that $\cP$ is of Swiss Cheese type if $\cP^1(m,n) = 0$ if $n>0$.
	
	A Swiss Cheese type operad $\cP$ endowed with a right action of the cyclic group $\mathbb Z_{n+1}$ on each $\cP^2(m,n)$ is said to be of Cyclic Swiss Cheese type (abbreviated CSC) if:
	\begin{itemize}
	\item The cyclic action is $\cP^1$ equivariant,
	
	\item The cyclic action and the color 2 compositions satisfy the same compatibility as in a cyclic operad.  
	\end{itemize}
	
	In particular, this last axiom implies that if $\cP$ is of CSC type then the partial compositions and the cyclic action combine to endow $\prod_m\cP^2(m,n)$ with the structure of a cyclic operad.
	
	A morphism of CSC type operads is a map of colored operads that is moreover equivariant with respect to the cyclic action.

\end{definition}

\subsection{Configuration spaces of points} The (original) Swiss Cheese operad is a colored operad introduced by Voronov \cite{Voronov} whose operations in color $1$ are given by rectilinear embeddings of discs in a big disc, while operations in color $2$ consist of rectilinear embeddings of discs and semi-dics in a big semi-disc. In \cite{Voronov}, Voronov considers also a homotopy equivalent operad $(\FM_2,\H )$ made out of configuration spaces of points on the plane or upper half-plane. This second construction has some advantages over the first one, one of them being that there is a natural Cyclic Swiss Cheese structure on $(\FM_2,\H )$ as we describe in this subsection.

Analogously to section \ref{sec: formality of FM2}, one can consider the configuration space of $m$ points on the upper half-plane and $n$ points at the boundary, modulo scaling and horizontal translations 
$$Conf_{m,n}(\mathbb C) = \{(x_1,\dots,x_m; y_1,\dots,y_n) \in \mathbb C^{n+m} | \Im(x_i)> 0, \Im(y_i)=0, \text{no points overlap} \}/\mathbb R_+ \ltimes \mathbb R.$$
There is an embedding $Conf_{m,n}(\mathbb C) \hookrightarrow Conf_{2m+n}(\mathbb C)$ by mirroring the bulk points along the real axis. Compactifying as in section \ref{sec: formality of FM2}, we obtain the space $\H_{m,n}$.

The spaces $\FM_2$ and $\H_{\bullet,\bullet}$ assemble into a Swiss Cheese type operad, with the two color compositions into $\H_{\bullet,\bullet}$ being still done by insertion into boundary strata.  There is in fact a cyclic action extending the Swiss Cheese structure to a Cyclic Swiss Cheese type operad structure as follows:

The open upper half plane is isomorphic to the Poincar\'e disc via a conformal map. This isomorphism sends the boundary of the plane to the boundary of the disc except one point that we label by $\infty$. We define the cyclic action of $\mathbb Z_{n+1}$ in $\H_{m,n}$ by cyclic permutation of the point labeled by infinity with the other points at the boundary.

\begin{figure}[h]
    \begin{tikzpicture}
	
    	\node (pic1) at (0,0)  
    	{\begin{tikzpicture}
    		\draw (0,0) arc (90:45:1cm) node [label= $\overline 1$] (1)[int]{} 
    		arc (45:10:1cm) node [label=right:$\overline 2$] (2)[int]{} 
    		arc (10:-45:1cm) node [label=right:$\overline 3$] (3)[int]{} 
    		arc (-45:-100:1cm) node [label=below:$\dots$]{} 
    		arc (-100:-180:1cm) node [label=left:$\overline{n-1}$] [int]{} 
    		arc (-180:-230:1cm) node [label=left:$\overline n$][int]{} 
    		arc (-230:-270:1cm) node [label=$\infty$][int]{} ;
    		\end{tikzpicture}};
    	
	    \node (text) at (2.5,0) {$\cdot \sigma\quad =$};

    	\node (pic2) at (5,0) 
    	{\begin{tikzpicture}
    		\draw (0,0) arc (90:45:1cm) node [label= $\infty$] [int]{} 
    		arc (45:10:1cm) node [label=right:$\overline 1$] [int]{} 
    		arc (10:-45:1cm) node [label=right:$\overline 2$] (3)[int]{} 
    		arc (-45:-100:1cm) node [label=below:$\dots$]{} 
    		arc (-100:-180:1cm) node [label=left:$\overline{n-2}$] [int]{} 
    		arc (-180:-230:1cm) node [label=left:$\overline {n-1}$][int]{} 
    		arc (-230:-270:1cm) node [label=above:$\overline n$][int]{};
    		\end{tikzpicture}};
    \end{tikzpicture}
\end{figure}

\subsection{Graphs}\label{sec: Gra}

For $m,n\geq 0$, let $\vKGra(m,n)$ be the free differential graded commutative algebra generated by ``edges'' $\Gamma^{i,j}$, $1\leq i,j\leq m$; ``edges'' $\Gamma^i_{\overline{j}}$, $1\leq i\leq m$; $1\leq \overline{j}\leq n$ in degree $-1$ and symbols $v_i$, $1\leq i\leq m$ of degree $-2$. 

The differential sends $v_i$ to $\Gamma^{i,i}$ and vanishes on every other generator.  The reason for the notation is that this cdga can be considered a variation of Kontsevich's graphs, used in \cite{Kontsevich}.

We interpret $\vKGra(m,n)$ as the space spanned by directed graphs with $m$ vertices of \textit{type I} labeled with the numbers $\{1,\dots, m\}$ that can be additionally decorated with a power of $v$, $n$ vertices labeled with the numbers $\{ \overline 1,\dots, \overline n\}$ of \textit{type II} and edges that can not start on a vertex of type II.

Let us consider a different free cdga, $\Gra(n)$ (c.f \cite{homotopybracesformality}), to be generated by symbols $\Gamma^{i,j}$, $1\leq i\ne j\leq n$, that is to say,  $\Gra(n)$ is the subspace of graphs of $\vKGra(n,0)$ containing no tadpoles or positive powers of $v$.  

We define a symmetric operad structure on $\Gra$ by setting the symmetric action to permute the labels on vertices and the operadic composition $\Gamma \circ_i \Gamma'$ to be the insertion of $\Gamma'$ in the $i$-th vertex of $\Gamma$ and taking a signed sum over all possible ways of connecting the edges incident to $i$ to $\Gamma$.

\begin{remark}[Sign rules]
	To obtain the appropriate signs one has to consider the full data of graphs with an ordering on the set of edges. In this situation the orientation of the edges of $\Gamma$ is preserved and one uses the symmetry relations on $\Gamma$ in such a way that the labels of the edges of the subgraph $\Gamma$ come before the labels of the edges of the subgraphs $\Gamma'$. The operad axioms are a straightforward verification. 
\end{remark}

We can form a Swiss Cheese type operad by setting $\Gra$ to be the operations in color $1$ and $\vKGra$ to be the operations in color $2$, considering the symmetric action permuting the labels of type I vertices and ignoring the symmetric action of type II vertices. The partial compositions are given as in $\Gra$, i.e., by insertion on the corresponding vertex, connecting in all possible ways and distributing corresponding the powers of $v$ also in all possible ways. 

Following Kontsevich's conventions, since type II vertices in $\vKGra$ will be seen as boundary vertices, we draw them with a line passing by the type II vertices.
\begin{center}
	
	\begin{tikzpicture}[scale=1]
	
	\node[label=right:$1$, label=$v^7$] (1) at (1.4,1) [int] {};
	\node[label=right:$2$] (2) at (2.2,0.6) [int] {};
	\node[label=right:$3$, label=$v^2$] (3) at (2.8,2) [int] {};
	\node[label=below:$4$, label=$v$] (4) at (0.2,1.4) [int] {};
	\node[label=below:$\overline 1$] (b1) at (0,0) [int] {};
	\node[label=below:$\overline 2$] (b2) at (1,0) [int] {};
	\node[label=below:$\overline 3$] (b3) at (2,0) [int] {};
	\node[label=below:$\overline 4$]  at (3,0) [int] {};
	\node[label=below:$\overline 5$] (b4) at (4,0) [int] {};
	
	\draw [->] (1)--(b3);
	\draw [->] (2)--(b2);
	\draw [->] (2)--(3);
	\draw [->] (4)--(b3);
	\draw [->] (4) --  (1);
	\draw [->] (3)  to [out=170,in=60] (b1);
	\draw [->] (3)--(b4);
	
	\path[->] (4) edge  [loop left] ();
	%\draw [->] (4)  to [out=170,in=60] (4);
	
	\draw  (-0.5,0)--(4.5,0);
	
	\node at (6,0) {$\in \vKGra(4,5)$.};
	\end{tikzpicture}
	
\end{center}

We define a cyclic $\mathbb Z_{n+1} = \langle \sigma | \sigma^{n+1}=e\rangle$ action on $\vKGra(m,n)$ on generators as follows: For all $1\leq i \leq m$ and $2\leq j \leq n$, we have  $\sigma(\Gamma^i_{\bar{j}})= \Gamma^i_{\overline{j-1}}$ and 
$\sigma(\Gamma^i_{\bar{1}}) = -\sum_{k=1}^n \Gamma^i_{\bar{k}} - \sum_{k=1}^m \Gamma^{i,k}$. The action is trivial on other generators, namely for $1\leq i,j\leq m$,  $\sigma(\Gamma^{i,j})= \Gamma^{i,j}$ and $\sigma(v_i) = v_i$.

The cyclic action is extended to the whole $\vKGra(m,n)$ by requiring it to be compatible with the product in the sense that $\sigma(ab)=\sigma(a)\sigma(b), \forall a,b \in \vKGra(m,n)$.  Since $\sigma^2(\Gamma^i_{\bar{1}}) = \Gamma^i_{\bar{m}}$, we have that $\sigma^{n+1}$ acts as the identity in every one-edge graph, and therefore the action of $\mathbb Z_{n+1}$ is well defined.

\subsection{Representation of a morphism.}

Let $\cP$ be a cyclic operad and $V$ a chain complex. Notice that there is an obvious operad of Cyclic Swiss Cheese type
\begin{equation*}
\left( \End_V, \Hom\left(V^{\otimes \bullet}, \cP\right)\right)
\end{equation*}
given by insertion of functions in to tensor powers of $V$ and the cyclic operadic compositions in $\op{P}$.  If $V$ is an algebra over the operad $\op{O}$, then this induces the structure of an operad of CSC type on $\left( \op{O}, \Hom\left(V^{\otimes \bullet}, \cP\right)\right)$ in which a cross color composition is given by pushing forward along the given morphism $\op{O}\to End_V$ and then composing as in the first example.  In particular there is an induced map:
\begin{equation*}
\left( \op{O}, \Hom\left(V^{\otimes \bullet}, \cP\right)\right)\to \left( \End_V, \Hom\left(V^{\otimes \bullet}, \cP\right)\right)
\end{equation*}

Now suppose $(A,d,\Delta)$ is an algebra over $\op{O}\in\mxdops$.  Combining this example with the sequence of dg operads $\tcc(\op{O})\to \nhc(\op{O}) \to \nhc(\End_A)\to End_{\nhc(A)}$ constructed in Section 1 (via Corollaries $\ref{wecor}$ and $\ref{algcor2}$) we have a morphism of CSC type operads:
\begin{equation*}
\left( \tcc(\op{O}), \Hom\left(\nhc(A)^{\otimes \bullet}, \cP\right)\right)\to \left( \End_{\nhc(A)}, \Hom\left(\nhc(A)^{\otimes \bullet}, \cP\right)\right)
\end{equation*}

\begin{example}\label{repex}  We will make subsequent use of the following example of such an operad of CSC type.  Let $A$ be the mixed complex $(\Tpoly,0,u\Div)$, $\op{O}=End_{T_{poly}}\in \mxdops$, and $\cP=\OpDpoly$.  Then we may consider the consequent morphism of CSC type operads:
\begin{equation*}
\left( \tcc(\End_{\Tpoly}), \Hom(\nhc(\Tpoly)^{\otimes \bullet}, \OpDpoly)\right)\to \left( \End_{\nhc(\Tpoly)}, \Hom(\nhc(\Tpoly)^{\otimes \bullet}, \OpDpoly)\right)
\end{equation*}

\end{example}

\subsection{The functor $\tcc$ on operads of Cyclic Swiss Cheese type.}  

\begin{proposition}\label{prop:LCC of CSC operad}
	If $\op{P}=(\op{P}^1,\op{P}^2)$ is an operad of CSC type and if the operad $\op{P}^1$ is a rotational operad, then $\tcc(\cP)=(\tcc(\cP^1),\cP^2)$ is still an operad of CSC type, with compositions given by
	\begin{equation*}
	p_2 \  \tilde{\circ}_l \  p_1v^k=\begin{cases} p_2 \circ_l \rho(p_1) &\text{ if }k=0  \\ 0 & \text{ if } k>0, \end{cases}
	\end{equation*}
	for $p_i\in \op P^i$.\end{proposition}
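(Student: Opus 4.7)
The plan is to verify the proposed formulas define a CSC type structure by checking each structural axiom in turn, leveraging the fact that we already know $\tcc(\op{P}^1)$ is an operad in mixed complexes (Definition \ref{tccdef} together with Construction \ref{twistconst}). Since the color 2 operad $\op{P}^2$ is untouched, the only new data to verify is the cross-color composition
\begin{equation*}
p_2 \ \tilde{\circ}_l \ (p_1 v^k) := \begin{cases} p_2 \circ_l \rho(p_1) & k=0, \\ 0 & k>0, \end{cases}
\end{equation*}
which I would first reinterpret as saying that $\tcc(\op{P}^1)$ acts on $\op{P}^2$ by first projecting to $v^0$-degree and then composing via the image of $\rho$, i.e.\ through the natural map $\theta^{-1}\colon \theta(\op{P}^1)\to \mbox{Im}(\rho)\hookrightarrow \op{P}^1$. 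This viewpoint makes it clear that the formula is well defined with respect to the suspension convention built into $\theta_\rho$.

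Next I would verify associativity by cases. The nontrivial case is a composition of a color 2 operation with two color 1 operations in the form
$p_2 \ \tilde{\circ}_l \ \bigl((p_1 v^r)\ \tilde{\circ}_j\ (q_1 v^s)\bigr)$
versus $\bigl(p_2\ \tilde{\circ}_l\ (p_1 v^r)\bigr)\ \tilde{\circ}_{l+j-1}\ (q_1 v^s)$. When $r=s=0$, using $(p_1 v^0)\tilde{\circ}_j(q_1 v^0) = (p_1\circ_j \rho(q_1)) v^0$ and applying $\rho$ once more, both expressions reduce to $p_2 \circ_l (\rho(p_1)\circ_j \rho(q_1))$ thanks to the defining rotational axiom $\rho(a\circ_i \rho(b))=\rho(a)\circ_i \rho(b)$ together with associativity in $\op{P}^2$; when either $r$ or $s$ is positive, both sides are zero by inspection. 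The remaining associativity checks involve only one cross-color composition and reduce to associativity of $\op{P}^2$ and of $\op{P}^1$.

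Compatibility with differentials is the cleanest step: on $\tcc(\op{P}^1)$ the total differential sends $p_1 v^k$ to $d(p_1)v^k+\rho(p_1)v^{k-1}$, so for $k\geq 1$ the Leibniz rule applied to $p_2\,\tilde\circ_l\,(p_1 v^k)=0$ gives $0$ on both sides, using crucially that $\rho^2=0$ to kill the $v^{k-1}$-term after applying $\tilde\circ_l$; for $k=0$ it reduces to the Leibniz rule for $\circ_l$ in $\op{P}^2$ applied to $p_2 \circ_l \rho(p_1)$, using $d\rho+\rho d=0$ to transfer the $d$ across $\rho$. Compatibility with the cyclic action and equivariance of the cross-color composition under the $\op{P}^1$-part follow directly from the original CSC type axioms of $\op{P}$, since the cross composition factors through $\op{P}^1$ via $\rho$, and the cyclic action on $\op{P}^2$ is already $\op{P}^1$-equivariant by hypothesis.

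I expect the main obstacle to be a bookkeeping one rather than a conceptual one, namely tracking the degree shifts coming from $\theta$ together with the Koszul signs introduced by the insertions in order to confirm that the rotational axiom $\rho(a\circ_i \rho(b))=\rho(a)\circ_i \rho(b)$ indeed propagates through the nested cross-color associativity relations with the correct signs. Once this bookkeeping is settled in the associativity step, the remaining verifications are essentially automatic.
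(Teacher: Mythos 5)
Your proposal is correct and follows essentially the same route as the paper's proof: case analysis on the powers of $v$ for associativity, with the only nontrivial case being the sequential composition $p_2\,\tilde{\circ}_l\,((p_1v^0)\,\tilde{\circ}_j\,(q_1v^0))$ handled via the rotational axiom $\rho(a\circ_i\rho(b))=\rho(a)\circ_i\rho(b)$, differential compatibility split into $k>0$ (where $\rho^2=0$ kills the extra term) and $k=0$ (Leibniz plus $d\rho+\rho d=0$), and cyclic equivariance inherited from the CSC axioms because the cross composition factors through $\rho$. The sign bookkeeping you flag is indeed the only residual care needed and is treated implicitly in the paper as well.
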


\begin{proof}
	Let $p_i, p'_i\in \op P^i$. We start by showing the associativity of the composition, which is clear if we take three elements of $\op P^2$ or three elements of $\op P^1$. Otherwise, if a positive power of $v$ appears in an element of $\tcc(\cP^1)$, both double compositions will be zero and associativity holds trivially. If $p_2 \ \tilde \circ_l \  (p_1 v^0\  \tilde{\circ}_j\  p_1'v^0)$ compose sequentially (as opposed to parallely), we have $p_2 \ \tilde \circ_l \  (p_1 v^0\  \tilde{\circ}_j\  p_1'v^0)=p_2  \circ_l  \rho(p_1  \circ_j  \rho(p_1'))= p_2  \circ_l  (\rho(p_1)  \circ_j  \rho(p_1')) = (p_2  \circ_l  \rho(p_1))  \circ_{l+j-1}  \rho(p_1'))$  $=(p_2  \tilde{\circ_l}  p_1) \ \tilde\circ_{l+j-1}\  p_1'$. The other associativity verifications are straightforward.

	For the compatibility with the differential, consider that $d(p_2 \ \tilde\circ_i \ p_1v^k) = 0$ if $k>0$. In that case,
	$dp_2 \ \tilde\circ_l \ p_1v^k \pm p_2 \ \tilde\circ_l \ dp_1v^k \pm p_2 \ \tilde\circ_l \ \rho(p_1)v^{k-1}=0$, owing to the compatibility of $d$ with $\circ_i$ and the fact that $\rho^2=0$.  If $k=0$, then $dp_2 \ \tilde\circ_l \ p_1 \pm p_2 \ \tilde\circ_l \ (d+u\rho)p_1 = dp_2 \circ_l \rho(p_1) \pm p_2 \ \tilde\circ_l \ dp_1 = d(p_2 \circ_l \rho(p_1)) \mp p_2 \circ_l d\rho(p_1) \pm p_2 \circ_l \rho(dp_1) = d(p_2 \ \tilde\circ_l \ p_1)$.
	
	The cyclic action on $\op P^2$ is still $\tcc (\op P^1)$ equivariant since $$p_2^\sigma \ \tilde  \circ_l \ p_1 = p_2^\sigma \circ_l \rho(p_1) = (p_2 \circ_l \rho(p_1))^\sigma = (p_2\ \tilde  \circ_i \ p_1)^\sigma.$$
	
\end{proof}

\begin{remark}\label{rem:swiss cheese gives LCC}
	Notice that this construction defines an endofunctor on the category whose objects are operads of CSC type that are operads on mixed complexes in color $1$ and whose morphisms are maps of colored operads that are equivariant with respect to the cyclic action and commute with the rotational structure in color $1$.
\end{remark}

Recall from \cite{Ward2} the operad $\mathsf B_\circlearrowright$ constructed as the image of the rotational operator $R$ on the operad of rooted planar trees. It is the untwisted version of the operad $\mathsf M_\circlearrowright$.

\begin{proposition}\label{prop:CSC to bimod}
	Let $\op P = (\op{P}^1,\op{P}^2)$ be an operad of CSC type. The totalized space of cyclic invariants $\prod_n \Sigma^n \op P^2 (\bullet, n)^{\mathbb Z_{n+1}}$ is a $\mathsf B_\circlearrowright - \op{P}^1$ bimodule.
\end{proposition}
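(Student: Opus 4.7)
The plan is to construct the two actions separately and then verify their compatibility. Set $E := \prod_n \Sigma^n \op{P}^2(\bullet, n)^{\mathbb Z_{n+1}}$ for brevity, and observe that the ``un-invariantized'' object $E^{un} := \prod_n \Sigma^n \op{P}^2(\bullet, n)$ is already a right $\op{P}^1$-module via the color~1 cross-color compositions and a left $\mathsf{B}$-module via the standard brace action built from the color~2 partial compositions, exactly as in the classical construction of the minimal operad's action on a (non-cyclic) deformation complex.

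For the right action on $E$, I would observe that if $p \in \op{P}^2(m,n)^{\mathbb Z_{n+1}}$ and $q \in \op{P}^1(r)$, then $p \circ_i q \in \op{P}^2(m+r-1,n)$ remains cyclically invariant: this is precisely the content of the first CSC axiom, the $\op{P}^1$-equivariance of the cyclic action. The right $\op{P}^1$-module axioms then follow from operadic associativity in $\op{P}$ between colors.

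For the left action of $\mathsf{B}_\circlearrowright$, I would use that $\mathsf{B}_\circlearrowright$ is by construction the image of the rotational operator $R$ on $\mathsf{B}$. Given a planar rooted tree $T \in \mathsf{B}$ and elements $p_0, p_1, \ldots, p_k$ in $E^{un}$, the ordinary brace operation $T \cdot (p_0; p_1, \ldots, p_k)$ inserts the $p_i$'s at the color~2 inputs of $p_0$ in the pattern prescribed by $T$. Applying $R$ rotates the role of the ``root'' input of $p_0$ cyclically with its other color~2 inputs, yielding an operation that, restricted to cyclic invariants, lands in $E$. One then defines the action of $R(T) \in \mathsf{B}_\circlearrowright$ on $E$ by symmetrizing the brace action across the $n+1$ cyclic positions of $p_0$; because $p_0$ is cyclically invariant, this symmetrization reduces to a well-defined operation. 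This is the two-colored analog of the action of $\mathsf{B}_\circlearrowright$ on the cyclic deformation complex of a cyclic operad constructed in \cite{Ward2}.

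The principal verification, and what I expect to be the main obstacle, is to show that this left action indeed lands in $E$, i.e.\ that the outputs are cyclically invariant. This reduces to the second CSC axiom (compatibility of the cyclic action with color~2 compositions), which guarantees that $\prod_m \op{P}^2(m, n)$ is itself a cyclic operad in the color~2 arity; consequently the calculations of \cite{Ward2} showing that $R$-images of brace operations preserve cyclic invariants apply verbatim, uniformly over the color~1 arity. The bimodule compatibility — that the left $\mathsf{B}_\circlearrowright$-action and the right $\op{P}^1$-action interact associatively in the appropriate operadic sense — follows from the distributivity and associativity between the color~1 and color~2 compositions inherent in any two-colored operad of CSC type. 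Signs and combinatorial bookkeeping mirror those established in \cite{Ward2}.
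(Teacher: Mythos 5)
Your proposal is correct and follows essentially the same route as the paper: the right $\op{P}^1$-action restricts to invariants by the equivariance axiom, the left $\mathsf{B}_\circlearrowright$-action comes from the second CSC axiom making $\prod_m \op{P}^2(m,n)$ a cyclic operad so that the construction of \cite{Ward2} applies, and compatibility holds because the two actions insert in different colors and hence commute by parallel associativity. The paper's proof is just a terser version of the same argument, citing \cite[Corollary 2.11]{Ward2} for the left module structure rather than unpacking the $R$-image description of the brace action.
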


\begin{proof}
	The left module structure follows from \cite[Corollary 2.11]{Ward2}. The colored operad structure defines a right $\op P^1$-module structure on $\prod_n \Sigma^n\op P^2 (\bullet, n)$ and the fact that this right module structure restricts to the space of invariants is a consequence of the equivariance of $\op P^1$ with respect to the cyclic structure.
	
	The compatibility of the left and right actions follows from the associativity for parallel composition on operads, as the left action only involves insertions of color $2$ and the right action only involves insertions of color $1$. 
\end{proof}

\begin{proposition}\label{prop:CSC to bimod is functorial}
	This construction is functorial.
\end{proposition}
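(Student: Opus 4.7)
The plan is to unpack what a morphism of CSC type operads gives us and verify that each piece of structure in the construction of Proposition~\ref{prop:CSC to bimod} is respected. Throughout, the natural target of the functor should be understood as the category whose objects are pairs $(\op{Q}, N)$ with $\op{Q}$ a (symmetric) operad and $N$ a $\mathsf{B}_\circlearrowright$--$\op{Q}$ bimodule, and whose morphisms $(g,\phi)\colon (\op{Q},N)\to (\op{Q}',N')$ consist of an operad map $g\colon \op{Q}\to\op{Q}'$ together with a $\mathsf{B}_\circlearrowright$-linear map $\phi\colon N\to N'$ that is right equivariant along $g$, i.e.\ $\phi(m\cdot q)=\phi(m)\cdot g(q)$.

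Let $f=(f^1,f^2)\colon \op{P}\to\op{Q}$ be a morphism of CSC type operads. First I would note that $f^2$ is $\mathbb Z_{n+1}$-equivariant on each $\op{P}^2(\bullet,n)$ by definition of a morphism of CSC type, and therefore restricts to a map on cyclic invariants. Totalizing and suspending yields a map
\[
\tilde f \colon \prod_n \Sigma^n \op P^2(\bullet,n)^{\mathbb Z_{n+1}} \longrightarrow \prod_n \Sigma^n \op Q^2(\bullet,n)^{\mathbb Z_{n+1}}
\]
which is automatically a chain map since $f^2$ commutes with the differential.

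Next I would verify that $\tilde f$ is a morphism of right modules along $f^1$. The right $\op P^1$ (respectively $\op Q^1$) action on this totalization was defined via the color-crossing compositions of the CSC type structure. Because $f$ is a map of colored operads, we have $f^2(p\circ_i q)=f^2(p)\circ_i f^1(q)$ for $p\in\op P^2$ and $q\in\op P^1$, which is exactly the compatibility required.

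For the left $\mathsf{B}_\circlearrowright$ action, recall that by \cite[Corollary 2.11]{Ward2} this action on $\prod_n \Sigma^n \op P^2(\bullet,n)^{\mathbb Z_{n+1}}$ is constructed purely from the cyclic operad structure obtained by combining the color-$2$ partial compositions within $\op P^2$ with the cyclic action. Since $f^2$ preserves both ingredients, it induces a morphism of cyclic operads on $\prod_m \op P^2(m,n)\to\prod_m \op Q^2(m,n)$, and hence $\tilde f$ is $\mathsf{B}_\circlearrowright$-linear. Compatibility of the left and right actions on each side is not an issue since it is part of the bimodule structure already established in Proposition~\ref{prop:CSC to bimod}.

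Finally I would check that composition and identity are preserved, which is immediate: the assignment $f\mapsto (f^1,\tilde f)$ is built componentwise from $f^1$ and from restriction of $f^2$ to invariants, and both operations are strictly functorial. The only mild subtlety, and the one point worth being careful about, is the setup of the target category so that bimodule morphisms over distinct operads are allowed; once that is in place, no further calculation is needed.
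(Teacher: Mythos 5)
Your proof is correct and follows essentially the same route as the paper's: cyclic equivariance gives the restriction to invariants, the colored-operad axiom gives right-module compatibility, and preservation of the cyclic operad structure in color $2$ gives $\mathsf{B}_\circlearrowright$-linearity via \cite[Corollary 2.11]{Ward2}. The extra care you take in specifying the target category (bimodules over varying operads) is a reasonable elaboration that the paper leaves implicit.
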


\begin{proof}
	The equivariance of the morphism with respect to the cyclic action guarantees that cyclic invariants are mapped to cyclic invariants.
	Since a  morphism of CSC type operads is in particular a morphism of colored operads, the induced map on the total space is a morphism of right bimodules.
	As for the right $\mathsf{B}_\circlearrowright$ action, the compatibility follows from the compatibility of the compositions with the cyclic structure, given in the axioms of a cyclic operad.
\end{proof}

\section{Bimodule maps} \label{sec4}
A homotopy $\Grav_\infty$ morphism from a $\Grav_\infty$ algebra $A$ to a $\Grav_\infty$ algebra $B$ can be expressed as a representation of the canonical $\Grav_\infty$ operadic bimodule on the colored vector space $A \oplus B$.

The strategy to find such a representation for our case $A=\Tpoly[u], B= \Dpoly^\sigma$ is to construct a certain bimodule $\mathsf{M}_\circlearrowright \circlearrowright  \ker \Delta_{\mathbb H}  \circlearrowleft   \tcc(\Chains(\mathsf{FM}_2))$ which is homotopy equivalent to the $\Grav_\infty$ canonical bimodule and construct a map of bimodules into $\End_{\Dpoly^\sigma} \circlearrowright \End^{\Tpoly[u]}_{\Dpoly^\sigma} \circlearrowleft \End_{\Tpoly[u]}$, where $\End^{\Tpoly[u]}_{\Dpoly^\sigma}(n) = \Hom({\Tpoly[u]}^{\otimes n},{\Dpoly^\sigma})  $.

More concretely, the goal of this Section is to define several operadic bimodules and construct the following series of bimodule maps, whose composition determines our desired $\Grav_\infty$ formality map.

\begin{equation}\label{diagram}
\xymatrix{ 
	\Grav_\infty \ar[d] & \circlearrowright & \ar[d]^{\ref{sec:top to top}} \Grav_\infty^{\text{bimod}}   & \circlearrowleft &  \Grav_\infty   \ar[d] \\ 
	\mathsf M_\circlearrowright \ar[d] & \circlearrowright & \ar[d]^{\ref{sec:top to top}} \ker \Delta_{\mathbb H}  & \circlearrowleft &  \tcc(\Chains(\FM_2))  \ar[d] \\ 
	\mathsf M_\circlearrowright \ar[d] & \circlearrowright & \ar[d]^{\ref{sec:top to top}} \Chains(\mathbb H_{\bullet,0})  & \circlearrowleft &  \tcc(\Chains(\FM_2))  \ar[d] \\ 
	\mathsf M_\circlearrowright \ar[d] & \circlearrowright & \ar[d]^{\ref{sec:top to graphs}} \left(\prod_n \Sigma^n\Chains(\mathbb H_{\bullet,n})^{\mathbb{Z}_{n+1}}\right)^\mu  & \circlearrowleft &  \tcc(\Chains(\FM_2))  \ar[d]\\ 
	\mathsf M_\circlearrowright \ar[d] & \circlearrowright & \ar[d]^{\ref{sec:repon col_op}} \left(\prod_n \Sigma^n \vKGra(\bullet, n)^{\mathbb{Z}_{n+1}}\right)^\mu  & \circlearrowleft &  \tcc(\Gra)  \ar[d] \\ 
	\mathsf M_\circlearrowright \ar[d] & \circlearrowright & \ar[d]^{\ref{sec:repon col_op}} \End^{\Tpoly[u]}_{\Dpoly^\sigma}  & \circlearrowleft &  \tcc(\End_{\Tpoly})  \ar[d] \\ 
	\End_{\Dpoly^\sigma} & \circlearrowright & \End^{\Tpoly[u]}_{\Dpoly^\sigma}& \circlearrowleft &  \End_{\Tpoly[u]} }
\end{equation}

Most of these maps follow from the application of Propositions \ref{prop:CSC to bimod} and \ref{prop:CSC to bimod is functorial}, sometimes after using Proposition \ref{prop:LCC of CSC operad}.  The labels on the arrows represent the section in which the respective map is constructed.

The top-most map in the diagram is due to the theory of quasi-torsors that was developed in \cite{torsors} and which we now briefly recall.

\begin{definition}
Let $\mathcal P$ and $\mathcal Q$ be two differential graded operads and let $\cM$ be a $\mathcal{P}-\mathcal Q$ operadic differential graded bimodule, i.e., there are compatible actions $$\mathcal P \circlearrowright \cM \circlearrowleft \mathcal Q.$$
We say that $\cM$ is a $\mathcal P$-$\mathcal Q$ quasi-torsor if there is an element $\mathbf 1 \in M^0(1)$ such that the canonical maps
\begin{equation}\label{modmaps}
\begin{aligned}
 l\colon \cP &\to \cM  \quad\quad\quad\quad\quad & r\colon\cQ & \to \cM \\
 p   &\mapsto p\circ (\mathbf 1,\dots, \mathbf 1)  & q &\mapsto \mathbf 1\circ q \\
\end{aligned}
\end{equation}
are quasi-isomorphisms.
\end{definition}

The main Theorem of \cite{torsors} states that if the $\cP-\cQ$-bimodule $\cM$ is an operadic quasi-torsor, then there is a zig-zag of quasi-isomorphisms connecting $\mathcal P \circlearrowright \mathcal M \circlearrowleft \mathcal Q$ to the canonical bimodule $\mathcal P \circlearrowright \cP \circlearrowleft \mathcal P$. Under good conditions, one can then homotopy lift the zig-zag to a cofibrant resolution $\mathcal P_\infty \circlearrowright \cP_\infty^{\text{bimod}} \circlearrowleft \mathcal P_\infty \to \mathcal P \circlearrowright \cM \circlearrowleft \mathcal Q$.

From this discussion it follows that to obtain $\Grav_\infty$ morphism $A\to B$ it suffices to construct a representation of a $\Grav$ quasi-torsor. We will show that the second row of diagram \eqref{diagram} is such a quasi-torsor, from which Theorem B (for  $M=\mathbb R^d$) follows. 

\subsection{From topology to graphs}

Recall from Section \ref{sec: formality of FM2} the map of cooperads $\omega_\bullet\colon \Gra^* \to \Omega(\FM_2)$. We wish to define a similar map $\omega_\bullet\colon \vKGra^*\to \Omega(\mathbb H_{\bullet,\bullet})$ denoted by the same symbol by abuse of notation.

Let us consider the (multivalued) angle function $\theta$ on $\mathbb H_{m,n}$ such that
\begin{equation*}
\theta(z,w,x) = \frac{1}{2 \pi} \arg \left( \frac{(w-z)(1-\bar{z}x)}{(1-\bar{z}w)(x-z)} \right) \end{equation*}
giving the angle between the geodesics $[w,z]$ and $[z,x]$. Since all values differ by an integer, the differential $d\theta$ is a well-defined $1$-form.

The map $\omega_\bullet\colon \vKGra^*\to \Omega(\mathbb H_{\bullet,\bullet})$ is defined to be a map of commutative algebras as follows:

\begin{itemize}
	\item The one-edge graph $\Gamma^{i,j}\in \vKGra^*$ \footnote{We identify the basis of $\vKGra$ with its dual basis in $\vKGra^*$, except the dual of the elements $v$ that we denote by $u$.} for $i\ne j$ is sent to $d\phi^{i,j}\coloneqq d \theta(z_i,z_j,z_\infty)$. $\phi^{i,j}$ can be pictured as the hyperbolic geodesic passing through  $i$ and $j$ and the vertical line passing by $i$ or alternatively, on the hyperbolic disc, this angle can be pictured as the angle between the lines $[\infty, i]$ and $[i,j]$.
	
	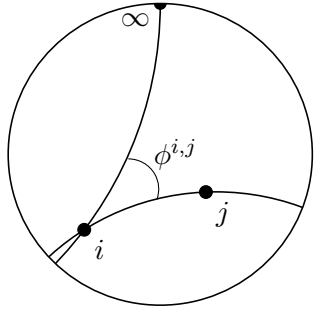
\begin{figure}[h]
		\begin{tikzpicture}[scale=2]
		\tkzDefPoint(0,0){O}
		\tkzDefPoint(1,0){A}
		\tkzDrawCircle(O,A)
		\tkzDefPoint(0.3,-0.25){j}
		\tkzDefPoint(-0.5,-0.5){i}
		\tkzDefPoint(0,1){z3}
		\tkzClipCircle(O,A)
		\tkzDrawCircle[orthogonal through=i and j](O,A)
		\tkzDrawCircle[orthogonal through=i and z3](O,A)
		\tkzDrawPoints[color=black,fill=black,size=12](i,j)
		\tkzLabelPoints(i,j)
		
		\fill (0,1) circle[radius=1.2pt] node[below left] {$\infty$};

		\node at (0.1,0) {$\phi^{i,j}$}; 
		\draw (-0.21,-0.03) arc (90:-20:0.2);
		\end{tikzpicture}
		\caption{The hyperbolic angle $\phi^{i,j}$}
	\end{figure}

	\item Similarly, the one-edge graphs $\Gamma^i_{\overline{j}}\in \vKGra^*$ are sent to $d\phi^i_{\overline{j}} \coloneqq d\theta(z_i,z_{\overline{j}},z_\infty)$.
	
	\item The tadpole graphs $\Gamma^{k,k}$ are sent to Willwacher's form $\eta_{z_k}$, where 
	$$\eta_{z} = \sum_{i=0}^n \theta(z,z_{\overline{i+1}}, z_{\bar{i}}) d\theta(z,z_{\bar{i}}, z_{\infty}),$$
	Notice that $\theta(z,z_{\overline{i+1}}, z_{\bar{i}})$ is a well defined smooth function since the points $z_{\bar{i}}$ and $z_{\overline{i+1}}$ are on the boundary of the disc.
	
	\item To define the image of a graph $\Gamma$ with no edges and with a vertex decorated with $u=v^*=v^{-1}$, in order for $f_2$ to commute with the differential, we have no choice but to define 
	$f_2(v_k) = d\eta_{z_k}$.
	
	\end{itemize}
	
	\begin{remark}\label{rem: not CSC map}
		Recall from Theorem \ref{prop:formality morphism} that the map $\Chains(\mathsf{FM}_2)\to \Gra$ is compatible with the mixed complex structure and therefore induces a map $\tcc(\Chains(\mathsf{FM}_2))\to \tcc(\Gra)$. One would like to have a map of CSC type operads $$\left(\Chains(\mathsf{FM}_2),  \Chains(\mathbb H_{\bullet,\bullet})\right) \to (\Gra, \vKGra),$$ however, due to the existence of tadpoles in $\vKGra$ this map is not compatible with the operadic composition. The next proposition states that we obtain nevertheless a map of CSC operads after taking the functor $\tcc$.
	\end{remark}

	\begin{proposition}\label{prop: need CCtheta}
		The map $$\omega_\bullet^*\colon \left(\tcc (\Chains(\mathsf{FM}_2)),  \Chains(\mathbb H_{\bullet,\bullet})\right) \to (\tcc (\Gra), \vKGra)$$
		is a morphism of operads of Cyclic Swiss Cheese type.
	\end{proposition}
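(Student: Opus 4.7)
The plan is to verify the three defining axioms of a CSC-type operad morphism for $\omega_\bullet^*$: (i) it restricts to an operad morphism in each color, (ii) it is $\mathbb{Z}_{n+1}$-equivariant in color 2, and (iii) it intertwines the cross-color compositions. The role of taking $\tcc$ in color 1 is precisely to rescue axiom (iii), which fails for the naive untwisted map as observed in Remark \ref{rem: not CSC map}.

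Axiom (i) on color 1 is immediate by functoriality of $\tcc$ applied to the morphism $\Chains(\FM_2) \to \Gra$ in $\mxdops$ from Theorem \ref{prop:formality morphism}. On color 2, $\omega_\bullet^*\colon \Chains(\H_{m,n}) \to \vKGra(m,n)$ is a chain map because the defining cdga map $\omega_\bullet\colon \vKGra^* \to \Omega(\H_{\bullet,\bullet})$ was built precisely so that the generating relation $dv_k = \Gamma^{k,k}$ matches the form-side identity $d\eta_{z_k} = \omega_\bullet(\Gamma^{k,k})$. Axiom (ii) reduces to a check on one-edge generators, where the defining cyclic formula $\sigma(\Gamma^i_{\bar 1}) = -\sum_k \Gamma^i_{\bar k} - \sum_k \Gamma^{i,k}$ is the graph-side manifestation of the closure-of-angles identity
\[
d\phi^i_{\infty} + \sum_{k=1}^n d\phi^i_{\bar k} + \sum_{k \neq i} d\phi^{i,k} + \eta_{z_i} \;=\; 0
\]
that holds on $\H_{m,n}$; under $\omega_\bullet^*$ the tadpole-decoration $\Gamma^{i,i}$ contributes the framing form $\eta_{z_i}$, and the minus sign reflects that the position formerly labeled $\bar 1$ has moved to $\infty$ under the cyclic rotation.

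Axiom (iii) carries the main content. By Proposition \ref{prop:LCC of CSC operad} and because $\omega_\bullet^*$ already intertwines $\Delta$ with the rotational operator $\rho = \{\delta,-\}$ on color 1 (Theorem \ref{prop:formality morphism}), the check reduces to the identity
\[
\omega_\bullet^*\bigl(c_2 \circ_l \Delta(c_1)\bigr) \;=\; \omega_\bullet^*(c_2) \,\circ_l\, \omega_\bullet^*\bigl(\Delta(c_1)\bigr)
\]
for $c_1 \in \Chains(\FM_2)$ and $c_2 \in \Chains(\H_{m,n})$, together with the vanishing of both sides when the $v$-exponent is positive (which is automatic from the twist formula). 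My strategy is to analyze the pushforward integrals $\int_{c_2 \circ_l c_1} \omega_\Gamma$ via Stokes' theorem on the Fulton-MacPherson compactification: the failure of compatibility for the naive untwisted map is localized on the boundary stratum of $\H_{\bullet,\bullet}$ where the inserted bulk cluster degenerates at the insertion point, and the contribution of these boundary integrals is precisely what is produced by pre-rotating $c_1$ by $\Delta$ (equivalently, by tadpole-decoration $\eta_{z_l}$ on the graph side via $\rho$). The main obstacle will be the careful bookkeeping of boundary strata and signs in this Stokes analysis, which should closely parallel the tadpole corrections that appear in the BV formality of \cite{Campos} and in the mixed-complex formality arguments of \cite{GS10}.
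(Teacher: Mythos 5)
Your outline of what must be checked (color-wise operad maps, cyclic equivariance, cross-color compatibility) is correct, and you rightly locate the main content in the cross-color axiom. But the tool you propose for that step --- Stokes' theorem on boundary strata of the compactification --- is not the right one, and I don't see how it would close the argument. The composition $c_2\circ_l c_1$ already lives in a boundary stratum by definition of the operad structure on $\H_{\bullet,\bullet}$; the identity to be verified, namely
$\int_{X\circ_l \Delta(Y)}\omega_\Gamma = \sum\bigl(\int_X\omega_{\tilde\Gamma}\bigr)\bigl(\int_Y\omega_{\Gamma'}\bigr)$
summed over decompositions $\Gamma\in\tilde\Gamma\circ_l\Delta(\Gamma')$, involves no exterior derivative and no comparison of a boundary integral with an integral of an exact form, so there is nothing for Stokes to do. (Stokes is relevant to the $d$-compatibility of $\omega_\bullet$ and to the vanishing lemmas, which is a separate verification.) The actual proof is a Fubini-type factorization of $\circ_l^*\omega_\Gamma$ over $X\times\Delta(Y)$ combined with two ingredients your proposal never supplies: first, the identity $\Delta^*(d\phi_{ij})=1$ (a consequence of $S^1$-equivariance of the projections $p_{ij}\colon\FM_2(n)\to\FM_2(2)$), which makes $\Delta^*$ a derivation deleting one angle form at a time and thereby matches the fact that $\Delta(\Gamma')$ on the graph side adds an edge between distinct vertices in all possible ways; second, a case analysis on the number of tadpoles carried by the subgraph $\gamma$ induced on the inserted vertices ($\geq 2$: both sides vanish; exactly $1$: unique choice of $\tilde\Gamma$; none: the candidate $\tilde\Gamma$ with a tadpole at the insertion vertex contributes zero by a sign cancellation). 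This tadpole bookkeeping is precisely why the untwisted map fails to respect composition (Remark~\ref{rem: not CSC map}) and why passing to $\tcc$ rescues it; your proposal attributes the failure to degenerating configurations instead, and moreover misidentifies the color-$1$ rotation $\rho=\{\delta,-\}$ on $\Gra$ as ``tadpole-decoration'': on $\Gra$ (which has no tadpoles) the tadpole terms in $\{\delta,-\}$ cancel and $\rho$ adds an honest edge between every pair of distinct vertices.

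On the cyclic equivariance, your ``closure-of-angles'' identity is in the right spirit but is not well-formed as written ($d\phi^i_\infty$ is not one of the forms in play, and $\vKGra$ has no generator dual to it), and you do not address the equivariance of the tadpole form $\eta_{z}$, which is the delicate generator: it requires the normalization $\sum_{i=0}^n\theta(z,z_{\overline{i+1}},z_{\bar i})=1$ to produce $\eta_z^\sigma=\eta_z-d\phi^z_{\overline 1}$, matching the dual cyclic action $(\Gamma^{i,j})^\sigma=\Gamma^{i,j}-\Gamma^i_{\overline 1}$ on $\vKGra^*$. So both halves of the argument have genuine gaps: the equivariance check is incomplete on the one generator where it is nontrivial, and the composition check rests on a method that does not apply.
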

	
	\begin{proof}
		Let us start by showing that the map $\vKGra^*\to \Omega(\mathbb H_{\bullet,\bullet})$ is compatible with the cyclic structure. It suffices to check this on the generators of $\vKGra^*$. For this, we start by observing that the cyclic structure on $\vKGra^*$, being dual to the one of $\vKGra$ is the following:
	
		\begin{itemize}\renewcommand{\labelitemii}{$\bullet$}
			\item $\left(\Gamma^{i,j}\right)^\sigma =\Gamma^{i,j} -\Gamma^i_{\overline{1}}$,
			\item $\left(\Gamma^i_{\bar{j}}\right)^{\sigma} = \Gamma^i_{\overline{j+1}}-\Gamma^i_{\overline{1}}$ with the convention that $\Gamma^i_{\overline{n+1}}=0$,
			\item $(v_i)^{\sigma} = v_i$.
		\end{itemize}
		
		The cyclic structure on $\Omega(\mathbb H_{m,n})$ is given by the pullback of the cyclic structure on $\mathbb H_{m,n}$. It follows that $\theta(z,z_{\overline{i}},z_{\overline{j}}) = \theta(z,z_{\overline{i+1}},z_{\overline{j+1}})$, from which it follows that $$d{\phi^{i}_{\overline{j}}}^\sigma =  d\theta(z_i,z_{\overline{j+1}},z_{\overline{1}}) = d\theta(z_i,z_{\overline{j+1}},z_{\infty})- d\theta(z_i,z_{\overline{1}},z_{\infty}) = d\phi^{i}_{\overline{j+1}} - d\phi^i_{\overline{1}}$$
		
		Similarly, it follows that $d{\phi^{i,j}}^\sigma = d\phi^{i,j} - d\phi^i_{\overline 1}$.
		
		To show that $\eta_{z_i}^\sigma = \eta_{z_i}-d\phi^i_{\overline 1}$, we make use of the fact that $\sum_{i=0}^n\theta(z,z_{\overline{i+1}},z_{\overline{i}}) =1$.
		
		\begin{align*}
		\eta_z^\sigma &= \sum_{i=0}^n\theta(z,z_{\overline{i+2}},z_{\overline{i+1}})d\theta(z,z_{\overline{i+2}},z_{\overline{1}})\\
		&=\sum_{i=0}^n\theta(z,z_{\overline{i+1}},z_{\overline{i}})d\theta(z,z_{\overline{i+1}},z_{\overline{1}})\\
		&=\sum_{i=0}^n\theta(z,z_{\overline{i+1}},z_{\overline{i}})(d\theta(z,z_{\overline{i+1}},z_{\infty})-d\theta(z,z_{\overline{1}},z_{\infty}))\\
		&=\eta_z - d\phi^z_{\overline{1}}. 
		\end{align*}
		
		Notice that the same computation, but with $d\theta$ instead of $\theta$ shows that $d\eta_z$ is invariant by the cyclic action.
		\\

		To show the compatibility with the cooperadic structure, let us start by noticing that the pullback of forms of the type $\eta_{z_k},d\phi^{i,j}$ and $d\eta_{z_k}$ under the composition map $\circ_i\colon \mathbb H_{m,n} \times \FM_2(k) \to \mathbb H_{m+k-1,n}$ is expressible with forms of the same type. For instance, the image of $d\phi^{1,2}\in \Omega(\mathbb H_{m,n})$ inside $\Omega(\mathbb H_{m-2,n}\times \FM_2(3))$ under the map $\circ_1^*$ is the form $1\otimes d\phi_{1,2} \in \Omega(\mathbb H_{m-2,n})\otimes \Omega(\FM_2(3))\subset \Omega(\mathbb H_{m-2,n}\times \FM_2(3))$, while $\circ_1^*(\eta_{z_1}) = \eta_{z_1} \otimes 1$.

		Let $X\in \Chain_l(\mathbb H_{m,n})$ and $Y\in  \Chain_r(\FM_2)$. The operadic compatibility in mixed colors amounts to showing that
		$$\sum_{\Gamma\in \vKGra_{l+r}(m+k-1,n)} \left(\int_{X\circ_i \Delta(Y)} \omega_{\Gamma}\right) \cdot \Gamma = \sum_{\tilde{\Gamma}\in \vKGra_l(m,n)} \left(\int_{X} \omega_{\tilde{\Gamma}}\right) \cdot \tilde{\Gamma} \circ_i \sum_{\Gamma'\in \Gra_{r}(k)} \left(\int_{Y} \omega_{\Gamma'}\right) \cdot \Delta(\Gamma'). \footnote{The sums are meant to be taken over the basis of graphs.}$$

		We need therefore to show that for every graph $\Gamma$ in $\vKGra_{l+r}(m+k-1,n)$, we have 
		
		\begin{equation}\label{eq:chains to graphs}
		\int_{X\circ_i \Delta(Y)} \omega_{\Gamma} = \sum_{\substack{\Gamma'\in \Gra_{r}(k)\\ \tilde{\Gamma}\in \vKGra_l(m,n) \\ \Gamma\in \tilde{\Gamma} \circ_i \Delta(\Gamma')}}\left(\int_X \omega_{\tilde\Gamma}\right)\left(\int_Y \omega_{\Gamma'}\right).
		\end{equation}
		
		Consider those vertices in $\Gamma$ labeled with numbers $i,i+1,\dots,i+k-1$ and let $\gamma$ be the subgraph of $\Gamma$ induced by these vertices. Furthermore, let us consider $\gamma^{\notadp}$ the subgraph of $\gamma$ where we disregard tadpoles and powers of $v$.
		
		Notice that $\int_{X\circ_i \Delta(Y)} \omega_{\Gamma} = \int_{X\times \Delta(Y)} \circ_i^*\omega_{\Gamma}$ and that we can decompose this integral into $\int_{X} \omega_{rest} \cdot \int_{\Delta(Y)}  \omega_{\gamma^{\notadp}}$. Notice that if $\gamma$ has at least two tadpoles, there will be two copies of $\eta_{z_i}$ in $\omega_{rest}$ and therefore $\omega_{rest}=0$. If that is the case, then also the right hand side of \eqref{eq:chains to graphs} must vanish as $\tilde\Gamma$ would need to have two tadpoles at the vertex $i$.  Here we have used the fact that $\Gamma^\prime$ and hence $\Delta(\Gamma^\prime)$ are tadpole free (after Example $\ref{graex}$).
		
		Suppose now that $\gamma$ has one tadpole. Then, a decomposition $\Gamma \in   \tilde{\Gamma} \circ_i \Delta(\Gamma')$ allows just one choice of $\tilde \Gamma$ (that requires $\tilde \Gamma$ to have a tadpole at $i$ and a power of $v$ equal to the total amount of powers of $v$ in $\gamma$).
		
		It suffices to check that $\int_{\Delta(Y)} \omega_{\gamma^\notadp} = \sum_{\Gamma'}\int_{Y} \omega_{\Gamma'}$, where the sum is being taken over the admissible $\Gamma'$ such that we find $\Gamma$ as a summand in $\tilde \Gamma \circ_i \Delta(\Gamma')$. Since $\Delta(\Gamma')$ adds an edge in every possible way, those admissible graphs correspond precisely to all possible graphs that one obtains by removing one edge from $\gamma^{\notadp}$. Notice however that $\int_{\Delta(Y)} \omega_{\gamma^\notadp} = \int_Y \Delta^*(\omega_{\gamma^\notadp})$ and $\Delta^*$ is a derivation that sends every $d\phi_{i,j}$ to the constant function $1$. This is because the projection maps $p_{ij}\colon \FM_2(n) \to \FM_2(2)$ are $S^1$ equivariant and therefore $\Delta^*(d\phi_{ij}) = p^*(\Delta^*(d\phi_{12})) = p^*(1) = 1$. It follows that expanding $\Delta^*(\omega_{\gamma^\notadp})$ one obtains exactly the same admissible graphs $\Gamma'$.

		Suppose now that $\gamma$ has no tadpoles. In general there are two possibilities for the choice of $\tilde\Gamma$, one containing a tadpole at $i$ and other not containing a tadpole at $i$. Suppose we consider $\tilde\Gamma$ with a tadpole at $i$. Then, for every admissible choice of $\Gamma'$, when we compute $\tilde{\Gamma} \circ_i \Delta(\Gamma')$ two copies of $\Gamma$ appear with opposite signs, since, like $\Delta$, inserting at a tadpole vertex produces every possible edge. Therefore there is no contribution on the right hand side of \eqref{eq:chains to graphs} if we take $\tilde\Gamma$ containing a tadpole at $i$. If $\tilde \Gamma$ contains no tadpole at $i$, then, as before, $\omega_{rest}=\omega_{\tilde{\Gamma}}$ and $\Delta^*(\omega_{\gamma^{\notadp}}) = \displaystyle\sum_{\text{admissible } \Gamma'} \omega_{\Gamma'}$.

		The compatibility  of $\omega_\bullet$ with the composition in color $1$ is clear.
	\end{proof}

%\end{itemize}

By applying Propositions \ref{prop:CSC to bimod} and \ref{prop:CSC to bimod is functorial} we obtain the following Corollary.
\begin{corollary}\label{cor:bimod map}
	There exist bimodules and a bimodule morphism
	\begin{equation*}
	\xymatrix{ 
		\mathsf{B}_\circlearrowright \ar[d]_{\text{id}} & \circlearrowright & \ar[d] \prod_n \Sigma^n \Chains(\mathbb H_{\bullet,n})^{\mathbb{Z}_{n+1}} & \circlearrowleft & \ar[d] \tcc(\Chains(\mathsf{FM}_2)) \\ 
		\mathsf{B}_\circlearrowright & \circlearrowright & \prod_n \Sigma^n \vKGra(\bullet, n)^{\mathbb{Z}_{n+1}} & \circlearrowleft &  \tcc(\Gra) }
	\end{equation*}
\end{corollary}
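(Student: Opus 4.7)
The plan is to assemble the corollary directly from three structural results already established in this section: Proposition \ref{prop:LCC of CSC operad}, which upgrades the color-$1$ slot of a CSC operad to $\tcc$ when that slot is rotational; Proposition \ref{prop:CSC to bimod}, which extracts a $\mathsf{B}_\circlearrowright$--$\op{P}^1$ bimodule from a CSC operad $(\op{P}^1,\op{P}^2)$ by taking totalized cyclic invariants in color $2$; and Proposition \ref{prop:CSC to bimod is functorial}, giving functoriality of that construction. All of the analytic content---compatibility of the forms $\omega_\bullet$ with the cyclic action, with the differential, and with the swiss cheese compositions in the presence of tadpoles---has already been absorbed into Proposition \ref{prop: need CCtheta}.

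First I would observe that $(\Chains(\FM_2),\Chains(\H_{\bullet,\bullet}))$ and $(\Gra,\vKGra)$ are operads of CSC type whose color-$1$ components are rotational: for $\Chains(\FM_2)$ this uses its mixed-complex structure from Theorem \ref{prop:formality morphism} viewed through $\iota\colon\mxdops\to\rotops$, and for $\Gra$ it is bracketing with the tadpole as in Example \ref{graex}. Applying Proposition \ref{prop:LCC of CSC operad} upgrades both to CSC operads $(\tcc(\Chains(\FM_2)),\Chains(\H_{\bullet,\bullet}))$ and $(\tcc(\Gra),\vKGra)$. Feeding each of these into Proposition \ref{prop:CSC to bimod} immediately produces the two rows of the claimed diagram as $\mathsf{B}_\circlearrowright$--$\tcc(\Chains(\FM_2))$ and $\mathsf{B}_\circlearrowright$--$\tcc(\Gra)$ bimodules respectively.

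Finally, Proposition \ref{prop: need CCtheta} supplies exactly a morphism of CSC operads from $(\tcc(\Chains(\FM_2)),\Chains(\H_{\bullet,\bullet}))$ to $(\tcc(\Gra),\vKGra)$, and Proposition \ref{prop:CSC to bimod is functorial} converts it into the desired morphism of bimodules. The left vertical arrow is the identity on $\mathsf{B}_\circlearrowright$ because the left action was built intrinsically from the cyclic structure on the color-$2$ part (as in \cite[Corollary 2.11]{Ward2}) and is preserved by any CSC morphism. I do not foresee a genuine obstacle at this assembly step; the real difficulty has already been resolved in Proposition \ref{prop: need CCtheta}, where the failure of $\omega_\bullet^*$ to be a CSC morphism before applying $\tcc$ (noted in Remark \ref{rem: not CSC map}) is precisely what the twisted gluings of $\tcc$ repair.
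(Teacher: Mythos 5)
Your proposal is correct and follows the paper's own route exactly: the paper deduces this corollary in one line by applying Propositions \ref{prop:CSC to bimod} and \ref{prop:CSC to bimod is functorial} to the morphism of CSC type operads furnished by Proposition \ref{prop: need CCtheta}, with Proposition \ref{prop:LCC of CSC operad} supplying the $\tcc$-upgraded CSC structures just as you describe. Nothing is missing.
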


\subsection{Twisting left modules}\label{sec:top to graphs}
Let us consider $\mu$, the $n-2$ dimensional chain of $\mathbb H_{0,n}$ in which the points at the boundary are free, the ``fundamental chain of the boundary". This chain is invariant under the $\mathbb{Z}_{n+1}$ action and therefore defines a degree $2$ element of $\prod_n \Sigma^n\Chains(\mathbb H_{\bullet,n})^{\mathbb{Z}_{n+1}}$. This is a Maurer--Cartan element with respect to the $\mathfrak s^{-1}\Lie$ action induced by $\mathfrak s^{-1}\Lie\to \mathsf{B}_\circlearrowright$. We can therefore twist the left modules of the diagram from Corollary \ref{cor:bimod map} by $\mu$ and its image $\mu'$ to obtain the following bimodule map

\begin{equation*}
\xymatrix{ 
	\Tw\mathsf{B}_\circlearrowright \ar[d]_{\text{id}} & \circlearrowright & \ar[d] \left(\prod_n \Sigma^n\Chains(\mathbb H_{\bullet,n})^{\mathbb{Z}_{n+1}}\right)^\mu  \\ 
	\Tw\mathsf{B}_\circlearrowright & \circlearrowright & \left(\prod_n\Sigma^n \vKGra(\bullet, n)^{\mathbb{Z}_{n+1}}\right)^{\mu}}
\end{equation*}

Since the left action concerns boundary points and the right action concerns bulk points, the two actions are compatible, giving us the bimodule map

\begin{equation}\label{eq:twisted bimod map 1}
\xymatrix{ 
	\Tw\mathsf{B}_\circlearrowright \ar[d]_{\text{id}} & \circlearrowright & \ar[d] \left(\prod_n \Sigma^n \Chains(\mathbb H_{\bullet,n})^{\mathbb{Z}_{n+1}}\right)^\mu  & \circlearrowleft &  \tcc(\Chains(\mathsf{FM}_2))  \ar[d] \\ 
	\Tw\mathsf{B}_\circlearrowright & \circlearrowright & \left(\prod_n \Sigma^n \vKGra(\bullet, n)^{\mathbb{Z}_{n+1}}\right)^{\mu}& \circlearrowleft &  \tcc(\Gra) }
\end{equation}

We can also consider the restriction of the left actions to $\mathsf{M}_\circlearrowright$, giving us the fourth map in \eqref{diagram}.

\subsection{Topological maps}\label{sec:top to top}

The projection map $p\colon \mathbb H_{m,n} \to \mathbb H_{m,0}$ that forgets the points at the boundary  induces a  strongly continuous chain \cite{HLTV} $p_{m,n}^{-1} \colon \mathbb H_{m, 0} \to \Chains(\mathbb H_{m, n})$.
The image of a configuration of points in $\mathbb H_{m, 0}$ can be interpreted as the same configuration of points but with $n$ points at the real line that are freely allowed to move. If we consider the total space $\Chains(\mathbb H_{\bullet, 0}) = \bigoplus_{m\geq 1}\Chains( \mathbb H_{m, 0})$, this induces a degree preserving map 
$$p^{-1} \colon \Chains(\mathbb H_{\bullet, 0}) \to \displaystyle\prod_{n\geq 0} \Sigma^n\Chains^{\mu}(\mathbb H_{\bullet, n}).$$

Notice that this map actually lands in the cyclic invariant space $\left(\prod_{n\geq 0} \Sigma^n\Chains^{\mathbb Z_{n+1}}(\mathbb H_{\bullet, n})\right)^{\mu} $

\begin{proposition}
	The map $p^{-1}$ is a morphism of right $\tcc(\Chains(\mathsf{FM}_2))$ modules and its image is stable under the action of $\mathsf{M}_\circlearrowright$.
\end{proposition}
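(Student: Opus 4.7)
The plan is to verify the two claims directly from the definitions, exploiting the fact that composition at a bulk point of $\mathbb H_{\bullet,\bullet}$ takes place entirely in the open upper half-plane, away from any boundary points.

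For the right module compatibility, I would first invoke Proposition \ref{prop:LCC of CSC operad}: the $\tcc(\Chains(\FM_2))$-action on both source and target vanishes on elements of the form $Y v^k$ with $k>0$, so it suffices to check compatibility for the ordinary $\Chains(\FM_2)$-insertion twisted by $\rho$. Since bulk-point composition $\mathbb H_{m,n}\times \FM_2(l)\to \mathbb H_{m+l-1,n}$ commutes with the forgetful projection $p$ on the nose (the inserted configuration is glued at a bulk stratum that does not involve any boundary point), the strongly continuous chain $p^{-1}$ satisfies $p^{-1}(X\circ_i \rho(Y)) = p^{-1}(X)\circ_i \rho(Y)$ for $X\in\Chains(\mathbb H_{m,0})$ and $Y\in\Chains(\FM_2(l))$. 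Summing over $n$ and passing to cyclic invariants gives the desired identity.

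For the stability of the image under the left action, I would identify the image of $p^{-1}$ as the ``free-boundary'' subspace, namely the chains of the form $\sigma\times\tau$ where $\tau$ is the fundamental chain of the boundary configurations. The Maurer--Cartan element $\mu$ is itself free-boundary by construction (it is $p^{-1}$ of the point in $\Chains(\mathbb H_{0,0})$). Under the twisted action, a tree $T\in\mathsf M_\circlearrowright\subset \Tw \mathsf B_\circlearrowright$ acts as a signed sum of boundary-point insertions of (other) free-boundary configurations, together with $\mu$-labellings of some external vertices that themselves contribute free-boundary chains. Since boundary-point insertion of a free-boundary chain into another free-boundary chain yields again a free-boundary chain, the image of $p^{-1}$ is preserved. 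The restriction $\mathsf M_\circlearrowright = \mathrm{im}(R)\subset \mathsf M$ only shrinks the acting operad and, moreover, $R$ merely permutes the role of the distinguished boundary slot among the boundary points, which manifestly preserves free-boundary configurations.

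I expect the main technical obstacle to be the careful bookkeeping of the Maurer--Cartan twist and its interaction with the cyclic invariants: verifying that the full sum over $\mu$-labellings entering the $\Tw \mathsf B_\circlearrowright$-action lies in the image of $p^{-1}$ requires a combinatorial argument on the boundary strata of the compactification $\mathbb H_{\bullet,\bullet}$, although the underlying geometric content---that bulk insertion ignores the boundary and that boundary insertion is closed on the free-boundary subspace---is essentially tautological.
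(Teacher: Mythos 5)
Your treatment of the right action is essentially the paper's (the paper simply asserts that $p^{-1}$ ``clearly commutes with the right action''; your observation that bulk insertion does not interact with the boundary points and hence commutes with the saturation $p^{-1}$ is the same geometric reason, spelled out). However, you never verify that $p^{-1}$ is a \emph{chain map} into the twisted target: the source $\Chains(\mathbb H_{\bullet,0})$ carries only the ordinary boundary operator, while $\left(\prod_n \Sigma^n\Chains(\mathbb H_{\bullet,n})^{\mathbb Z_{n+1}}\right)^{\mu}$ carries $\partial$ plus the Maurer--Cartan twist. The paper's proof devotes a paragraph to exactly this point: $\partial\, p^{-1}_{m,n}(c) = p^{-1}_{m,n}(\partial c) \pm p^{f\partial}_{m,n}(c)$, where the fiberwise boundary term (boundary points colliding on the real line) is identified with the extra piece of the differential produced by twisting. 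You mention ``bookkeeping of the Maurer--Cartan twist'' as an anticipated obstacle but never carry it out, so the module-morphism claim is not actually established.

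The more serious gap is in the left-action argument. Your key step --- ``boundary-point insertion of a free-boundary chain into another free-boundary chain yields again a free-boundary chain'' --- is false for an individual insertion: the operadic composition $\circ_{\bar j}$ lands in a boundary stratum of the compactification where the entire inserted configuration (bulk points \emph{and} its boundary points) is infinitesimally clustered at the $\bar j$-th position, so the resulting chain is not saturated over the fibers of $p$ and hence not in the image of $p^{-1}$. What is true is that the \emph{sum over all insertion positions} constituting the brace-type action of the generators $T_n, T_n'$ of $\mathsf M$ reassembles the full fiber; this is precisely the paper's identity
\begin{equation*}
T_n(p^{-1}(c_0),\dots,p^{-1}(c_n)) = p^{-1}\bigl(p(T_n(p^{-1}(c_0),\dots,p^{-1}(c_n)))\bigr),
\end{equation*}
whose verification is a genuine geometric computation on the boundary strata of $\mathbb H_{\bullet,\bullet}$ deferred to \cite[Appendix A.2]{homotopybracesformality}. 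Since each summand of your ``signed sum'' fails the property you need, your conclusion does not follow from your lemma; the content you call ``essentially tautological'' is in fact the heart of the proposition. (Your remaining points --- that $\mu$ itself lies in the image of $p^{-1}$ and that restricting from $\mathsf M$ to $\mathsf M_\circlearrowright$ only shrinks the acting operad --- are correct and agree with the paper, which also proves stability under all of $\mathsf M$.)
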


The proof of this result is essentially in \cite[Appendix A.2]{homotopybracesformality} where the reader can find further details.

\begin{proof}
	The morphism clearly commutes with the right action.
	Let $c\in \Chains(\mathbb H_{m, 0})$. 
	
	The boundary term $\partial p_{m,n}^{-1}(c)$ has two kind of components. When at least two points at the upper half plane get infinitely close, giving us the term $p_{m,n}^{-1}(\partial c)$, and when points at the real line get infinitely close, giving us $\pm p_{m,n}^{f\partial}(c)$, where the $f\partial$ superscript represents that we are considering the boundary at every fiber.
	
	Then, we have
	$p^{-1}(\partial c) = \prod_{n\geq 0} p_{m,n}^{-1}(\partial c) =  \prod_{n\geq 0} \partial p_{m,n}^{-1}(c)  \pm p_{m,n}^{f\partial}(c)$. The first summand corresponds to the normal differential in $\Chains(\mathbb H_{m, n})$ and the second summand is precisely the extra piece of the differential induced by the twisting.
	
	It remains to check the stability under the left $\mathsf{M}_\circlearrowright\subset \mathsf{M}$ action. In fact, the stronger statement that the image is stable by the $\mathsf{M}$ holds. To show this, it is enough to check the stability under the action of the generators $T_n$ and $T_n'$.
	
	Let $c_0,\dots, c_n$ be chains on $\Chains(\mathbb H_{m,0})$ and consider the action of generators of the form $T_n\in \mathsf{M}(n+1)$ on their images, i.e. consider $T_n(p^{-1}(c_0), p^{-1}(c_1),\dots,p^{-1}(c_n))$. The result follows from computing that 
	$$T_n(p^{-1}(c_0), p^{-1}(c_1),\dots,p^{-1}(c_n)) = p^{-1}(p(T_n(p^{-1}(c_0), p^{-1}(c_1),\dots,p^{-1}(c_n))))$$
	
	and a similar equality for $T_n'$.
\end{proof}

Since $p^{-1}$ is right inverse to the projection map, from this proposition it follows that $\Chains(\mathbb H_{\bullet,0} )$ has a natural left $\mathsf{M}_\circlearrowright$ module structure. This gives us the third map of bimodules from diagram \eqref{diagram}

\begin{equation*}
\xymatrix{ 
	\mathsf{M}_\circlearrowright \ar[d]_{\text{id}} & \circlearrowright & \ar[d]  \Chains(\mathbb H_{\bullet,0} ) & \circlearrowleft &  \tcc(\Chains(\mathsf{FM}_2))  \ar[d] \\ 
	\mathsf{M}_\circlearrowright & \circlearrowright & \left(\prod_n \Sigma^n \Chains(\mathbb H_{\bullet,n})^{\mathbb{Z}_{n+1}}\right)^{\mu'}  & \circlearrowleft &  \tcc(\Chains(\mathsf{FM}_2))  }
\end{equation*}

We want now to make the first row a quasi-torsor. The left and right operads have the correct homology $\Grav$ however, as a symmetric sequence $ H(\mathbb H_{\bullet,0} )=\Ger$. 

Notice that there is no analog of the $S^1$ action of $\FM_2$ on $\mathbb H_{\bullet,0}$.  We can nevertheless define a mixed complex structure at the chain level in the following way.  Let $i\colon \FM_2 \to \mathbb H_{\bullet,0} $ be the map resulting from collapsing a configuration into one point, or alternatively, composing a configuration in $\FM_2$ with the single element $\mathbbm{1}\in \mathbb H_{1,0} $.  This is a homotopy equivalence and admits a retract $r\colon  \mathbb H_{\bullet,0}\to \FM_2$ by forgetting the boundary line.  In particular $ri = \text{id}$.

%Taking chains of the sequence $\mathbb H_{\bullet,0}\times S^1 \stackrel{r\times id}\longrightarrow \FM_2\times S^1\to \FM_2\stackrel{i}\to \mathbb{H}_{\bullet,0}$ endows $\mathbb H_{\bullet,0}$ with an $S^1$ action such that $r\colon  \mathbb H_{\bullet,0}\to \FM_2$ is a map of $S^1$ spaces.  Denoting the induced maps on chains also by $i$ and $r$, it follows that $\Chains(\mathbb H_{\bullet,0})$ is a mixed complex with $\Delta_{\mathbb H} = i \Delta_{\FM_2} r$.

%We would like to do some $\tcc$-like construction on $\Chains(\mathbb H_{\bullet,0})$, but notice that there is no analog of the $S^1$ action of $\FM_2$ on $\mathbb H_{\bullet,0}$. We can nevertheless define a mixed complex structure at the chain level in the following way:

%Let $i\colon \FM_2 \to \mathbb H_{\bullet,0} $ be the map resulting from collapsing a configuration into one point, or alternatively, composing a configuration in $\FM_2$ with the single element $\mathbbm{1}\in \mathbb H_{1,0} $.

%This is a homotopy equivalence and admits a retract $r\colon  \mathbb H_{\bullet,0}\to \FM_2$ by forgetting the boundary line.

Denoting the induced maps on chains also by $i$ and $r$, we see $\Chains(\mathbb H_{\bullet,0})$ has a mixed complex structure by defining the degree $1$ map $\Delta_{\mathbb H} \colon \Chains(\mathbb H_{\bullet,0}) \to \Chains(\mathbb H_{\bullet,0})$ to be $\Delta_{\mathbb H} = i \Delta_{\FM_2} r$. From $ri = \text{id}$ it follows $\Delta_{\mathbb H}^2=0$.

\begin{proposition}
	The subspace $\ker \Delta_{\mathbb H}  \subset \Chains(\mathbb H_{\bullet,0})$ is a  $\mathsf{M}_\circlearrowright-\tcc(\Chains(\mathsf{FM}_2))$ sub-bimodule.
\end{proposition}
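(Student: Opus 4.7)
The plan is to verify stability separately under the two actions, exploiting the characterization $x \in \ker \Delta_{\mathbb H}$ iff $\Delta_{\FM_2}(r(x)) = 0$, which follows from the fact that $r i = \mathrm{id}$ forces $i$ to be injective on chains.

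For the right $\tcc(\Chains(\FM_2))$-action I would argue by a direct computation. By Proposition \ref{prop:LCC of CSC operad}, the twisted gluing $x \,\tilde{\circ}_l\, (p v^k)$ vanishes for $k>0$ and equals $x \circ_l \Delta_{\FM_2}(p)$ for $k=0$. Since $r$ merely forgets the boundary, a cross-color composition inserting a full-plane configuration into a bulk vertex is compatible with forgetting the boundary line, i.e.\ $r(x \circ_l q) = r(x) \circ_l q$ for any $q \in \Chains(\FM_2)$. Using that $\Delta_{\FM_2}$ is a derivation of the operadic composition on $\Chains(\FM_2)$ (an operad in mixed complexes by Theorem \ref{prop:formality morphism}) and that $\Delta_{\FM_2}^2 = 0$, one obtains
\[
\Delta_{\mathbb H}\bigl(x \circ_l \Delta_{\FM_2}(p)\bigr) = i\bigl(\Delta_{\FM_2}(r(x)) \circ_l \Delta_{\FM_2}(p)\bigr),
\]
which vanishes whenever $x \in \ker \Delta_{\mathbb H}$.

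For the left $\mathsf{M}_\circlearrowright$-action the strategy is to exploit the identification $\mathsf{M}_\circlearrowright = \mathrm{im}(R)$, so that any acting element has the form $a = R(a')$ with $R(a) = R^2(a') = 0$. The key step is to establish a Leibniz-type compatibility
\[
\Delta_{\mathbb H}\bigl(a' \cdot (c_0, \dots, c_n)\bigr) = \pm R(a') \cdot (c_0, \dots, c_n) + \sum_{i=0}^{n} \pm a' \cdot (c_0, \dots, \Delta_{\mathbb H}(c_i), \dots, c_n)
\]
for $a' \in \mathsf{M}$ and $c_j \in \Chains(\mathbb H_{\bullet, 0})$. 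Granting this, substituting $a = R(a')$ and $c_j \in \ker \Delta_{\mathbb H}$ makes every term on the right vanish.

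The principal obstacle is verifying this Leibniz formula, which amounts to matching the abstract rotation $R$ on the minimal operad $\mathsf{M}$ with the geometric rotation $\Delta_{\FM_2}$ on configurations. I would unwind the definition of the $\mathsf{M}_\circlearrowright$-action on $\Chains(\mathbb H_{\bullet,0})$: it is inherited from the action on the cyclic invariants $\prod_n \Sigma^n \Chains(\mathbb H_{\bullet, n})^{\mathbb Z_{n+1}}$ via the embedding $p^{-1}$ whose stability under $\mathsf{M}_\circlearrowright$ was established in the previous proposition. The rotational operad identity $\rho(a \circ_i \rho(b)) = \rho(a) \circ_i \rho(b)$ satisfied by $(\mathsf{M}, d, R)$, combined with the derivation property of $\Delta_{\FM_2}$ on $\Chains(\FM_2)$-compositions and the compatibility of $r$ with the forgetting of boundary points, should assemble to yield the desired derivation rule after passing through $p^{-1}$ and the subsequent projection.
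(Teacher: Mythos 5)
Your treatment of the right $\tcc(\Chains(\FM_2))$-action is correct and is essentially the paper's own computation: both expand $\Delta_{\mathbb H}(h\ \tilde\circ_l\ c)=i\Delta_{\FM_2} r(h\circ_l\Delta_{\FM_2}c)$ using that $r$ intertwines bulk insertions, that $\Delta_{\FM_2}$ is a derivation for the compositions of $\Chains(\FM_2)$, and that $\Delta_{\FM_2}^2=0$; whether one concludes via injectivity of $i$ (your route) or via $i$ being itself a map of right modules (the paper's) is immaterial, and the $k>0$ case is trivial in both.

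The left-action half has a genuine gap. Everything rests on the Leibniz-type identity
\begin{equation*}
\Delta_{\mathbb H}\bigl(a'\cdot(c_0,\dots,c_n)\bigr)=\pm R(a')\cdot(c_0,\dots,c_n)+\sum_i\pm\, a'\cdot(c_0,\dots,\Delta_{\mathbb H}c_i,\dots,c_n),
\end{equation*}
which you yourself flag as the principal obstacle and then only gesture at. The ingredients you cite do not assemble into a proof of it: the rotational identity $\rho(a\circ_i\rho(b))=\rho(a)\circ_i\rho(b)$ is an internal statement about $R$ on $\mathsf M$ and says nothing about how the combinatorial root-moving operator on trees matches the geometric bulk rotation $i\Delta_{\FM_2}r$ after passing through $p^{-1}$, the twist by the boundary fundamental chains, and $p$. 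That matching is exactly the delicate point --- recall that $(\mathsf M,d,R)$ fails to lie in $\mxdops$ precisely because $R$ is not a derivation for compositions, and correspondingly the correction term in your formula is genuinely nonzero for general $a'\in\mathsf M$ (e.g.\ for the cup-product tree, which is not in $\mathrm{im}(R)$, acting on two single points: the product lands outside $\ker\Delta_{\mathbb H}$, the defect being the bracket $R(\mu)$, as in the $\mathsf{BV}$ relation). The paper sidesteps all of this: it observes that the left $\mathsf M_\circlearrowright$-action is built from boundary-point compositions and the cyclic permutation of boundary labels, which do not interact with the purely bulk operator $\Delta_{\mathbb H}=i\Delta_{\FM_2}r$, so the action commutes with $\Delta_{\mathbb H}$ outright and preserves its kernel. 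If you wish to keep your $R^2=0$ mechanism you must actually prove the displayed identity at least for $a'\in\mathsf M_\circlearrowright$ (where the correction term vanishes and it reduces to the commutation the paper asserts); as written, ``should assemble'' is an assertion, not an argument.
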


\begin{proof}
	Let $h\in \Chains(\mathbb H_{\bullet,0})$ and let $c\in \Chains(\FM_2)$ so that $v^kc\in \tcc(\Chains(\FM_2))$. 
	
%	The operator $\Delta_{\mathbb H}$ satisfies 
	
	We have $\Delta_{\mathbb H} (h\circ_i c)=i\Delta r(h\circ_i c)= i\Delta(r(h) \circ_i c) = i(\Delta r(h) \circ_i c) + ir(h)\circ_i \Delta(c) = \Delta_{\mathbb H}(h) \circ_i c + ir(h)\circ_i \Delta(c) $.
	
	Therefore,  
	$\Delta_{\mathbb H} (h\ \tilde\circ_i\ c) = \Delta_{\mathbb H} ( h \circ_i \Delta c) = \Delta_{\mathbb H}  h \circ_i \Delta c$, so if $h\in \ker \Delta$, also $h\ \tilde\circ_i\ c\in \ker \Delta$.
	For higher powers of $k$ we have  $\Delta(h\ \tilde\circ_i\ v^kc)=0$, therefore $\ker \Delta_{\mathbb H} $ is trivially stable by the right action.
	
	On the other hand, $\ker \Delta_{\mathbb H} $ is stable by the left action since it only involves cyclic operations and compositions, all of which are compatible with $\Delta_{\mathbb H} $.
	
\end{proof}

\begin{proposition}
	$$\xymatrix{ 
		\mathsf{M}_\circlearrowright & \circlearrowright &  \ker \Delta_{\mathbb H} & \circlearrowleft &  \tcc(\Chains(\mathsf{FM}_2))}$$
	is a quasi-torsor.
\end{proposition}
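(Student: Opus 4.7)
The plan is to exhibit a unit $\mathbf{1}\in\ker\Delta_{\mathbb H}(1)$ and verify that the two canonical maps $l\colon \mathsf M_\circlearrowright\to\ker\Delta_{\mathbb H}$ and $r\colon \tcc(\Chains(\FM_2))\to\ker\Delta_{\mathbb H}$ associated to it are both quasi-isomorphisms. Since $\mathbb H_{1,0}$ is a single point, the unique generator $\mathbf{1}\in \Chains_0(\mathbb H_{1,0})$ automatically lies in $\ker\Delta_{\mathbb H}$ because $\Chains_{-1}(\mathbb H_{1,0})=0$.

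The first step is to view $(\Chains(\mathbb H_{\bullet,0}),d,\Delta_{\mathbb H})$ as a mixed complex level-wise quasi-isomorphic to $(\Chains(\FM_2),d,\Delta)$. The identities $ri=\mathrm{id}$, $\Delta_{\mathbb H}i=i\Delta$ and $r\Delta_{\mathbb H}=\Delta r$ follow directly from the defining formula $\Delta_{\mathbb H}=i\Delta r$, so $i$ and $r$ are morphisms of mixed complexes whose underlying chain maps are homotopy equivalences. Since $H(\Chains(\FM_2),d)\cong\Ger$ and the induced operator $[\Delta]$ on $\Ger$ is Getzler's operator $R$, which is exact in arities $\geq 2$ by Example \ref{exger}, the hypotheses of Lemma \ref{morlem2} hold for $(\Chains(\mathbb H_{\bullet,0}),d,\Delta_{\mathbb H})$ in each arity. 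The arity-wise version of Corollary \ref{wecor} then produces a levelwise quasi-isomorphism $\tcc(\Chains(\mathbb H_{\bullet,0}))\to \ker\Delta_{\mathbb H}$ sending $x\otimes v^k$ to $\Delta_{\mathbb H}(x)$ when $k=0$ and to $0$ otherwise.

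For the right map, the twisted-gluing formula of Proposition \ref{prop:LCC of CSC operad} unfolds $r$ as $r(q\otimes v^0)=\mathbf{1}\circ_1 \Delta q=i(\Delta q)=\Delta_{\mathbb H}(i(q))$ and $r(q\otimes v^k)=0$ for $k>0$. Hence $r$ factors as
\[
\tcc(\Chains(\FM_2))\xrightarrow{\tcc(i)}\tcc(\Chains(\mathbb H_{\bullet,0}))\xrightarrow{\sim}\ker\Delta_{\mathbb H},
\]
where $\tcc(i)$ is a quasi-isomorphism because $\tcc$ preserves levelwise quasi-isomorphisms of mixed complexes, and the second arrow is the one just constructed. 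Therefore $r$ is a quasi-isomorphism and, in particular, $H(\ker\Delta_{\mathbb H})\cong H(\tcc(\Chains(\FM_2)))\cong \Grav$ by Corollary \ref{WEtogravcor}.

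For the left map, Theorem A gives $H(\mathsf M_\circlearrowright)\cong \Grav$, so $l$ induces a map $\Grav\to \Grav$ that we must show is an isomorphism. After unwinding the $\mathsf M_\circlearrowright$-module structure on $\Chains(\mathbb H_{\bullet,0})$, which was constructed from the section $p^{-1}$ together with the Maurer--Cartan twist by the fundamental chain $\mu$, evaluating a cycle representing a gravity class on the unit $\mathbf{1}$ in every input should reproduce that same class under the identification $H(\ker\Delta_{\mathbb H})\cong \Grav$ furnished by the right map. The main obstacle is precisely this last homological identification: because the left action is defined indirectly whereas $H(\ker\Delta_{\mathbb H})\cong \Grav$ has been described via the right action, one must track the $p^{-1}$ lift and the twist by $\mu$ through the chain-level identifications of Example \ref{exger} to conclude that $l$ is the identity on $\Grav$ and hence a quasi-isomorphism.
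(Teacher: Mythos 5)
Your treatment of the right-hand map is correct and essentially the paper's: you identify $H(\ker\Delta_{\mathbb H})$ with $\Grav$ by applying Lemma~\ref{morlem2} and Corollary~\ref{wecor} arity-wise to the mixed complex $(\Chains(\mathbb H_{\bullet,0}),d,\Delta_{\mathbb H})$, and you factor $r$ as $\tcc(i)$ followed by the map $x\otimes v^0\mapsto\Delta_{\mathbb H}(x)$. The paper packages the same argument as a commutative square involving $\theta^{-1}$, $\nhc(i)$ and the inclusion $\ker\Delta_{\mathbb H}\hookrightarrow\nhc(\Chains(\mathbb H_{\bullet,0}))$, but the content is the same, and your unwinding of the twisted gluing $\mathbf 1\,\tilde\circ_1\,(q\otimes v^0)=i(\Delta q)=\Delta_{\mathbb H}(i(q))$ is exactly why that square commutes.

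The left-hand map is where your proposal has a genuine gap, and you flag it yourself: you write that evaluating a gravity cycle on $\mathbf 1$ ``should reproduce that same class'' and that ``one must track the $p^{-1}$ lift and the twist by $\mu$'' through the identifications --- but you never carry out that tracking, and this is precisely the nontrivial step. The paper closes it without any such chain-level bookkeeping, by two observations. First, since both homologies are already known to be $\Grav$ (degreewise finite dimensional in each arity), it suffices to check that the generators $g_n\in\mathsf M^{\pm 1}_\circlearrowright(n)$ are sent to generators in homology; and since the relevant graded piece $\Grav_1(n)$ is one-dimensional, it is enough to show that $l(g_n)$ is nonzero in homology. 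Second, this nonvanishing is detected concretely: composing $\ker\Delta_{\mathbb H}(n)\hookrightarrow\Chains(\mathbb H_{n,0})$ with the map induced by the angle function $\angle_{12}\colon\mathbb H_{n,0}\to S^1$ sends $l(g_n)$ to a chain homologous to the fundamental chain of the circle. You should replace your final paragraph with an argument of this kind; as written, the assertion that $l$ induces the identity on $\Grav$ is a statement of what remains to be proved rather than a proof.
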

\begin{proof}  We may apply Corollary \ref{wecor} to the underlying $\mathbb{S}$-modules to find 
\begin{equation*}
 H(\ker \Delta_{\mathbb H})= \hnhc (\Chains(\mathsf{FM}_2)) = \Grav = H(	\mathsf{M}_\circlearrowright),   
\end{equation*}
 and we just need to check that the maps $p\colon \mathsf{M}_\circlearrowright \to \ker \Delta_{\mathbb H}$ and $q\colon\tcc(\Chains(\mathsf{FM}_2)) \to \ker \Delta_{\mathbb H}$ induce quasi-isomorphisms.
	
	The map $q$ fits in the following commutative diagram
	$$
	\begin{tikzcd}
	\tcc(\Chains(\FM_2)) \arrow{r}{q}\arrow{d}{\theta^{-1}} & \ker \Delta_{\mathbb H} \arrow[hookrightarrow]{d}\\
	\nhc(\Chains(\FM_2)) \arrow{r}{\lhc^-(i)} & \lhc^-(\Chains(\mathbb H_{\bullet,0}))
	\end{tikzcd}$$ 
	Since all other maps are quasi-isomorphisms, so is $q$.
	
	To see that $p\colon \mathsf{M}_\circlearrowright \to \ker \Delta_{\mathbb H}$ is a quasi-isomorphism, notice that since we already know the homologies are isomorphic, it suffices to show that the generators $g_n \in \mathsf M^{\pm 1}_\circlearrowright (n)$ are sent to generators of the homology of $\ker \Delta_{\mathbb H} (n)$.
	
	Notice that since $\Grav_1(n)$ is 1-dimensional, in fact in suffices to show that $p(g_n)$ is non-zero in homology. 
	
	For this, notice that if we denote by $\angle_{12}\colon \mathbb H_{n,0} \to S^1$ the map remembering only the angle between points $1$ and $2$, then, the image of $p(g_n)$ under the composition $\ker \Delta_{\mathbb H} \hookrightarrow C(\mathbb H_{n,0}) \stackrel{\angle_{12}}{\to} C(S^1)$ is (homologous to) the fundamental chain of the circle and is therefore non-zero. 
\end{proof}

\begin{remark} %It seems like this is already basically what we did above though.
	This is another way to show the formality of $\mathsf{M}_\circlearrowright$, as it would be now quasi-isomorphic to $\tcc(\Chains(\mathsf{FM}_2))$ and therefore to $\tcc(\Graphs)$ and  $\Grav \to \tcc(\Graphs)$ is a quasi-isomorphism.
\end{remark}

\subsection{Action of Graphs on $\Tpoly[u]$ and $\Dpoly^\sigma$}\label{sec:repon col_op}

In this Section we construct the action on $\Tpoly[u]$ and $\Dpoly^\sigma$. We express this in the form of an operadic bimodule morphism

\begin{equation*}
\xymatrix{ 
	\mathsf M_\circlearrowright \ar[d] & \circlearrowright & \ar[d] \left(\prod_n \Sigma^n \vKGra(\bullet, n)^{\mathbb{Z}_{n+1}}\right)^{\mu'}  & \circlearrowleft &  \tcc(\Gra)  \ar[d] \\ 
	\End_{\Dpoly^\sigma} & \circlearrowright & \End^{\Tpoly[u]}_{\Dpoly^\sigma}& \circlearrowleft &  \End_{\Tpoly[u]} }
\end{equation*}

\begin{remark}
	If one tries to replicate the arguments of the previous section, the starting place would be to construct a map of CSC type operads
	$$ \left(\Gra,\vKGra\right) \to \left( \End_{\Tpoly}, \Hom\left(\Tpoly[u]^{\otimes \bullet}, \Dpoly \right)\right), $$
	and the argument would continue with the application of Proposition \ref{prop:LCC of CSC operad}.
	Unfortunately, on the right hand side we don't have a colored operad due to the non-compatibility of the differential with the operadic composition. 
	
	We will rectify this problem by using the operad of CSC type 
	\begin{equation*}
	\left( \tcc(\End_{\Tpoly}), \Hom\left(\Tpoly[u]^{\otimes \bullet}, \OpDpoly\right)\right) 
	\end{equation*}
	(see Example $\ref{repex}$) as an intermediary.
\end{remark}

Recall Kontsevich's action of the operad $\Gra$ on $\Tpoly$ \cite{Kontsevich} given for every graph $\Gamma \in \Gra(k)$ and vector fields $X_1,\dots,X_k \in \Tpoly(\mathbb R^d)$ by
$$\Gamma(X_1,\dots, X_k) = \left(\prod_{(i,j)\in\Gamma} \sum_{l=1}^d \frac{\partial}{\partial x_l^{(j)}}\wedge \frac{\partial}{\partial \xi_l^{(i)}} \right) (X_1\wedge\dots\wedge X_k),$$

where $x_1,\dots, x_d$ are the coordinates in $\mathbb R^d$ and $\xi_1, \dots, \xi_d$ be the corresponding basis of vector fields. Notice that this map is compatible with the mixed complex structure on both sides.%This follows from the fact that this map extends to BVGra and the tadpole graph is sent to the divergence operator.

\begin{proposition}\label{prop:graphs to Tpoly}
	There is a map of Cyclic Swiss Cheese type operads
	$$ \left(\tcc(\Gra),\vKGra\right) \to \left( \tcc(\End_{\Tpoly}), \Hom\left(\Tpoly[u]^{\otimes \bullet}, \OpDpoly\right)\right). $$
\end{proposition}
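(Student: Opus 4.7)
The plan is to define the morphism color-by-color, then verify the compatibilities required of a map of CSC type operads (differentials, intra-color composition, cross-color composition, and cyclic equivariance).

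In color 1, one starts from Kontsevich's classical action $K\colon \Gra\to \End_{\Tpoly}$ and observes that $K$ is a morphism in $\mxdops$. Indeed, the rotation operator on $\Gra$ (external bracket with the tadpole) creates a tadpole at each vertex, while the rotation on $\End_{\Tpoly}$ is the external bracket with $\Div$; these are intertwined by $K$ because a tadpole at vertex $i$ acts on multivector fields as the divergence of the $i$-th input. Applying the functor $\tcc$ of Definition \ref{tccdef} yields the color 1 component $\tcc(\Gra)\to \tcc(\End_{\Tpoly})$.

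In color 2, I would define the map $\vKGra(m,n)\to \Hom(\Tpoly[u]^{\otimes m},\OpDpoly(n))$ by extending Kontsevich's formula in the obvious way: assign to each bulk vertex $i$ the input vector field, read each edge $\Gamma^{i,j}$ between bulk vertices as a coordinate derivative of the vector field at $j$ paired with a dual coefficient derivative at $i$, each edge $\Gamma^i_{\bar j}$ as a derivative applied to the function $f_j$ serving as the $j$-th boundary input, each tadpole at $i$ as the divergence of $X_i$, and each decoration $v_i^{k_i}$ as the projection onto the $u^{k_i}$-component of input $i$ (so that $v_i$ is dual to $u$).

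Four verifications are needed. Differential compatibility reduces to the relation $d(v_i)=\Gamma^{i,i}$: on the target this is matched by the differential $u\Div$ on $\nhc(\Tpoly)=\Tpoly[u]$, since composing projection onto the $u^k$-component with $u\Div$ on the input coincides with projecting onto the $u^{k-1}$-component followed by $\Div$, which is precisely the tadpole action. Intra-color compatibility in color 1 follows from applying $\tcc$ to the classical Kontsevich identity on vertex insertion. Cyclic equivariance reduces to repeated integration-by-parts against $\omega$ in the pairing defining the cyclic structure on $\OpDpoly$: the identity $\sigma(\Gamma^i_{\bar 1})=-\sum_k \Gamma^i_{\bar k} - \sum_l \Gamma^{i,l}$ is the combinatorial shadow of the fact that the ``total angle around vertex $i$'' is exact, so the edge ``to $\infty$'' can be exchanged for the sum of all other edges emanating from $i$.

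The principal technical obstacle is the cross-color composition. By Proposition \ref{prop:LCC of CSC operad}, the composition in the codomain reads $p_2 \,\tilde{\circ}_l\, (f v^k)=\delta_{k,0}\, p_2\circ_l \rho(f)$ with $\rho(f)=\{\Div,f\}$, whereas in the source it is the insertion of a $\Gra$-graph at a bulk vertex of a $\vKGra$-graph, with resumming of the edges formerly incident to the insertion point and distribution of the $v$-decorations. Matching these two rules is the algebraic counterpart of the chain-level computation performed in the proof of Proposition \ref{prop: need CCtheta}: summing over all ways of attaching broken edges to vertices of the inserted subgraph plays precisely the role of $\Delta^\ast$ on differential forms in that argument, which in turn encodes the Leibniz rule for $\Div$ applied to a Kontsevich graph evaluation; and the vanishing for $k>0$ mirrors the fact that $v$-decorations forced onto the inserted subgraph would create new tadpoles that cancel in the graph combinatorics, exactly as the $k>0$ component is killed by $\rho^2=0$ on the target side.
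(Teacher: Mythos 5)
Your proposal follows essentially the same route as the paper: the color-1 component is Kontsevich's map $\Gra\to\End_{\Tpoly}$, observed to intertwine the rotation operators and then fed through the functor $\tcc$; the color-2 component is the Willwacher-type graph action with the $v$-power/$u$-power matching condition; and the verifications (differential via tadpole $=$ divergence, cross-color compatibility from the two sides being given by the same formula, cyclic equivariance via integration by parts) are the ones the paper carries out. One small quibble: the vanishing of the cross-color composition against $p_1v^k$ for $k>0$ holds on \emph{both} sides directly by the definition of the $\tilde\circ_l$-composition in Proposition \ref{prop:LCC of CSC operad}, so this case is trivial and does not require (and is not explained by) the tadpole-cancellation mechanism you invoke.
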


\begin{proof}
	The map $\tcc(\Gra)\to \tcc(\End_{\Tpoly})$ is obtained by taking the functor $\tcc$ to Kontsevich's map above and is therefore a map of dg operads. The map $\vKGra\to \Hom\left(\Tpoly[u]^{\otimes \bullet}, \OpDpoly\right)$ is essentially\footnote{Notice that Willwacher's graphs do not involve powers of $v$.} defined as described in \cite[Section 4.2]{WC}. 
	%Concretely, let us denote by $x_1,\dots, x_d$ the coordinates in $\mathbb R^d$ and let $\xi_1, \dots, \xi_n$ be the corresponding basis of vector fields.
	For $X_1u^{i_1},\dots,X_mu^{i_m}\in\Tpoly[u]$  the action of $\Gamma \in \vKGra(m,n)$ on $X_1u^{i_1},\dots,X_mu^{i_m}$ is zero if there exists a vertex $l$ of type I in $\Gamma$ such that the power of $v$ at the vertex $l$ does not match $i_l$. Otherwise, for $f_1,\dots,f_n\in C_c^\infty (\mathbb R^d)$, the action is given by 
	\begin{equation}\label{eq:action of graphs}
	\Gamma(X_1u^{i_1},\dots,X_mu^{i_m})(f_1,\dots, f_n) =  \left(\prod_{(i,j)\in\Gamma} \sum_{r=1}^d \frac{\partial}{\partial x_r^{(j)}}\wedge \frac{\partial}{\partial \xi_r^{(i)}} \right) (X_1,\dots, X_m;f_1,\dots, f_n),
	\end{equation}
	where the product runs over all edges of $\Gamma$ in the order given by the numbering of edges and the superscripts $(i)$ and $(j)$ mean that the partial derivative is being taken on the $i$-th and $j$-th component of $X_1,\dots,X_m$ (or $f_j$, if $j$ corresponds to a type II vertex).
	
	We need to check compatibility with the differentials. For simplicity of notation, let us focus on the piece of the differential acting on the vertex $1$ and suppose this is decorated by $v^k$ and let us denote by $d_1$ the piece of the differential only acting on the first vertex, i.e., the piece that lowers $k$ by $1$ and adds a tadpole. Since the differential on $\OpDpoly$ is zero, we need to show that $0=d_1\Gamma(X_1u^{i_1},\dots) - \Gamma(d(X_1u^{i_1}),\dots)$. Both summands are zero if $k\ne i_1-1$ and if $k=i_1-1$ they cancel since the action of a tadpole on a multivector field produces its divergence.
	
	The compatibility of the map with the mixed color composition is clear, as the map $\Gra \to \End(\Tpoly)$ is given by essentially the same formula \eqref{eq:action of graphs}.
	
	To check the compatibility with the cyclic action in color $2$ we notice that the cyclic action on $\Hom\left(\Tpoly[u]^{\otimes \bullet}, \OpDpoly\right)$ is given by the cyclic action on $\OpDpoly$ and integration by parts produces exactly the kind of graphs given by the cyclic action on $\vKGra$. An explicit computation can be found in \cite[Lemma 20]{Campos}.
\end{proof}
Combining this result with Example $\ref{repex}$ we find:

\begin{corollary}
There is a map of Cyclic Swiss Cheese type operads
\begin{equation*}
    \left(\tcc(\Gra),\vKGra\right)\longrightarrow \left(\End_{\Tpoly[u]},\Hom\left(\Tpoly[u]^{\otimes \bullet}, \OpDpoly\right)\right)
\end{equation*}
\end{corollary}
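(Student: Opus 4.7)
The plan is simply to compose the two CSC morphisms already at hand. First, view the polyvector fields as a mixed complex $(\Tpoly,0,\Div)$; applying the negative cyclic chains functor from Construction \ref{lcc} gives $\nhc(\Tpoly) = (\Tpoly[u], u\Div)$, which is precisely the chain complex appearing on the left-hand side of Willwacher's cyclic formality theorem. In particular, $\Tpoly$ is an algebra over the $S^1$-operad $\End_\Tpoly$, so Corollary \ref{algcor2} and the inclusion $\nhc(\End_\Tpoly)\hookrightarrow \End_{\nhc(\Tpoly)}$ produce a map of dg operads $\nhc(\End_\Tpoly)\to \End_{\Tpoly[u]}$. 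Composing with the natural map $\tcc(\End_\Tpoly)\to \nhc(\End_\Tpoly)$ from Corollary \ref{wecor}, we obtain a map of dg operads $\tcc(\End_\Tpoly)\to\End_{\Tpoly[u]}$.

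Next I would invoke the setup of Example \ref{repex} with $A=\Tpoly$, $\cP=\OpDpoly$: this promotes the above map to a morphism of CSC type operads
\begin{equation*}
\left(\tcc(\End_\Tpoly),\Hom(\Tpoly[u]^{\otimes \bullet},\OpDpoly)\right)\longrightarrow\left(\End_{\Tpoly[u]},\Hom(\Tpoly[u]^{\otimes \bullet},\OpDpoly)\right),
\end{equation*}
whose color-$2$ component is the identity and whose color-$1$ component is the map of the previous paragraph. The cross-color composition on the left is defined by pushing forward along $\tcc(\End_\Tpoly)\to \End_{\Tpoly[u]}$, so cross-color compatibility is automatic; cyclic $\mathbb{Z}_{n+1}$-equivariance in color $2$ is untouched since the color-$2$ part is the identity.

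Finally, Proposition \ref{prop:graphs to Tpoly} furnishes a morphism of CSC type operads $(\tcc(\Gra),\vKGra)\to(\tcc(\End_\Tpoly),\Hom(\Tpoly[u]^{\otimes \bullet},\OpDpoly))$. Composing this with the morphism constructed above yields the desired map. The only thing to verify is that the composition of two morphisms of CSC type operads is again such a morphism, which is immediate from the definition: the color-$1$ components compose as operad maps, the color-$2$ components compose as maps of colored operad modules that are $\mathbb{Z}_{n+1}$-equivariant, and the cross-color compatibility is preserved by composition. Since all of the subtle content---the interaction of $\tcc$ with the cyclic action, the compatibility of the graph action with differentials, and the identification of $\nhc(\Tpoly)$ with $\Tpoly[u]$---has already been handled in the results cited, no new obstacle appears at this stage.
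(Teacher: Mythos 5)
Your proposal is correct and follows exactly the paper's route: the paper derives this corollary in one line by composing the CSC morphism of Proposition \ref{prop:graphs to Tpoly} with the morphism of Example \ref{repex} (which itself packages the chain $\tcc(\End_{\Tpoly})\to\nhc(\End_{\Tpoly})\hookrightarrow\End_{\nhc(\Tpoly)}$ from Corollaries \ref{wecor} and \ref{algcor2}). Your write-up merely unfolds the same ingredients explicitly, so there is nothing to add.
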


Applying Propositions \ref{prop:CSC to bimod} and \ref{prop:CSC to bimod is functorial} to %Propositions \ref{prop:graphs to Tpoly}, \ref{prop:LCC(Tpoly) to Tpoly[u]} 
this result, we get the following bimodule maps:

\begin{equation*}
\xymatrix{ 
	\mathsf B_\circlearrowright \ar[d] & \circlearrowright & \ar[d] \left(\prod_n \Sigma^n \vKGra(\bullet, n)^{\mathbb{Z}_{n+1}}\right)  & \circlearrowleft &  \tcc(\Gra)  \ar[d] \\ 
	\mathsf B_\circlearrowright \ar[d] & \circlearrowright & \ar[d] \End^{\Tpoly[u]}_{\TDpoly^\sigma}  & \circlearrowleft &  \tcc(\End_{\Tpoly})  \ar[d] \\ 
	\mathsf B_\circlearrowright & \circlearrowright & \End^{\Tpoly[u]}_{\TDpoly^\sigma}& \circlearrowleft &  \End_{\Tpoly[u]} }
\end{equation*}

By twisting we get the last 3 rows from diagram \eqref{diagram}. Notice that we can replace the last row by $\End_{\Dpoly^\sigma}$ as the action on the bimodule is, by definition, the action on $\Dpoly^\sigma$.

\subsection{An extension of Willwacher's morphism}

In this section we remark that the restriction of the $\Grav_\infty$ morphism to $\sLie_\infty$ is Willwacher's morphism \cite{WC}. This follows essentially from the rigidty of bimodule maps from $\sLie_\infty^{\text{bimod}}$ to $\Chains(\mathbb H_{\bullet,0})$. Concretely, suppose we take two maps $f,g\colon \sLie_\infty^{\text{bimod}} \to \Chains (\mathbb H_{\bullet,0})$ that agree in arity $1$ (notice that $\mathbb H_{1,0}) = \{pt\}$). An inductive argument shows that then $f$ and $g$ must be the same map. 

Let us consider the family $(\mu_n)_{n\geq 1}$ of generators of $\sLie_\infty^{\text{bimod}} $. The element  $\mu_n \in \sLie_\infty^{\text{bimod}} (n)$ has degree $2-2n$. 
Assume by induction that $f(\mu_k) = g(\mu_k)$ for all $k<n$. Then $d(f(\mu_n)-g(\mu_0)) = f(d\mu_n) - g(d\mu_n)= 0$, since the differential of $\mu_n$ only involves elements $\mu_k$ for $k<n$. Therefore $f(\mu_n)-g(\mu_n)$ represent a homology class in $H_{2-2n}(\mathbb H_{n,0}) = \sLie_{2-2n}^{\text{bimod}}(n) = 0$. 

It follows that there exists some chain $c\in \Chain_{1-2n}(\mathbb H_{n,0})$ such that $dc = f(\mu_n)-g(\mu_n)$, but since $\dim(\mathbb H_{n,0}) = 2n-2$ there can be no such (non-zero) chain $c$ from which our conlusion follows.

\section{Globalization}\label{sec5}
Let $M$ be a $d$-dimensional oriented manifold.
In this section we show that the $\Grav_\infty$ quasi-isomorphism $\Tpoly[u](\mathbb R^d) \to \Dpoly^\sigma(\mathbb R^d)$ constructed in the previous sections can be  globalized to a quasi-isomorphism $\Tpoly[u](M) \to \Dpoly^\sigma(M)$. All work is essentially already done as the globalized version follows from formal geometry techniques as in the original Kontsevich map \cite[Section 7]{Kontsevich} and its cyclic version \cite[Appendix]{WC}.

Before reading this section, we recommend the non-expert reader to read \cite[Appendix]{WC} that contains all the crucial arguments. We also recommend \cite[Section 4]{Dolgushev} for a detailed introduction to the Fedosov resolutions that we use. Let us nevertheless sketch the general argument.

We start by remarking that the entire construction of the $\Grav_\infty$ quasi-isomorphism $\Tpoly[u](\mathbb R^d) \to \Dpoly^\sigma(\mathbb R^d)$ still holds if we replace $\mathbb R^d$ by $\Rformal$, its formal completion at the origin.

 One considers $\mathcal T^{\text{formal}}_\text{poly}$ (resp. $\mathcal D^{\text{formal}}_\text{poly}$),  the vector bundle on $M$ of fiberwise formal multivector fields (resp. multidifferential operators) tangent to the fibers. As in the flat case, one can also consider their cyclic versions $\mathcal T^{\text{formal}}_\text{poly}[u]$ (with appropriate differential) and $(\mathcal D^{\text{formal}}_\text{poly})^\sigma$.
 
 We can then construct the vector bundles $\Omega(\mathcal T^{\text{formal}}_\text{poly}[u], M)$ of forms valued in $\mathcal T_\text{poly}[u]$ and $\Omega((\mathcal D^{\text{formal}}_\text{poly})^\sigma, M)$ of forms valued in $(\mathcal D^{\text{formal}}_\text{poly})^\sigma$ with appropriate differentials. 
 
The fibers of the bundles $\mathcal T^{\text{formal}}_\text{poly}[u]$ and $(\mathcal D_\text{poly}^{\text{formal}})^\sigma$ are isomorphic to $\Tpoly(\Rformal)$ and $\Dpoly(\Rformal)$, respectively. Therefore, the formal version of the formality map can be used to find a vector bundle $\Grav_\infty$ quasi-isomorphism

\begin{equation}\label{fiberwise morphism}
  U^f\colon \Omega(\mathcal T^{\text{formal}}_\text{poly}[u], M) \to \Omega((\mathcal D^{\text{formal}}_\text{poly})^\sigma, M). \footnote{Using the fact that the formality morphism is invariant by linear transformation of coordinates.}
\end{equation}

These two vector bundles can be related with $\Tpoly[u](M)$ and $\Dpoly^\sigma(M)$. In fact,  with an appropriate change of differential that comes from a choice of a flat connection, $\Omega(\mathcal T^{\text{formal}}_\text{poly}[u], M)$ becomes a resolution of $\Tpoly[u](M)$ and $\Omega((\mathcal D^{\text{formal}}_\text{poly})^\sigma, M)$ becomes a resolution of $\Dpoly^\sigma(M)$. Both changes of differential can be seen locally as a twist via a Maurer-Cartan element $B$ \footnote{This $B$ is the same one that one uses in the non-cyclic setting. The fact that $B$ is still a Maurer-Cartan element in $\Omega(\mathcal T^{\text{formal}}_\text{poly}[u], U)$ follows from it being divergence free \cite[Proposition 27]{WC}.} sitting inside $\Omega^1(\mathcal T^{\text{formal},1}_\text{poly}[u], U)$  or $\Omega^1((\mathcal D^{\text{formal}}_\text{poly})^{\sigma,1}, U)$. 

However, the linear part of $B$ (in the fiber coordinates) is not globally well defined. It follows that to show that the globalization of the $\Grav_\infty$ map is possible, it suffices to see that its construction is compatible with twisting by Maurer--Cartan elements in a way that is not using the linear part of $B$. \\

There are are three main components in the globalization procedure:

\begin{enumerate}
    \item The $\Grav_\infty$ formality morphism needs to be made compatible with twisting, 
    \item The $\mathfrak{s}^{-1}\Lie_\infty$ piece of the $\Grav_\infty$ map must send $B$ to itself,
    \item The twisting procedure must not use the linear part of $B$.
\end{enumerate}

We remark that the second condition is automatically satisfied since the $\mathfrak{s}^{-1}\Lie_\infty$ piece of the $\Grav_\infty$ map is precisely Willwacher's formality map which satisfies this property.

The first component is essentially done by operadic twisting together with the verification of a condition of native twistability at the level of $\Chains(\mathbb H_{\bullet,0})$. The third component consists of checking that after the twisting procedure, the obtained $\Grav_\infty$ morphism factors through graphs whose action does not use the linear part of $B$. As we will see later, this would occur  whenever there exist internal vertices with exactly one outgoing edge and at most one incoming edge (since more incoming edges would kill the linear part).

\subsection{The approach using operadic twisting}
Let us recall the formalism of operadic twisting, developed extensively in \cite{OpTwisting}. Most of it adapts in a straightforward manner to the operadic bimodule  setting, as explained in the Appendix of \cite{Campos}.
Let $\cP$ be an operad under $\mathfrak{s}^{-1}\Lie_\infty$. If one twists a $\cP$-algebra $A$ (in particular a $\mathfrak{s}^{-1}\Lie_\infty$-algebra) by a Maurer-Cartan element $\mu\in A$, the resulting twisted algebra $A^\mu$ is not an algebra over $\cP$ but rather over the twisted $\Tw \cP$. 

However, if $\cP$ is \textit{natively twistable}, i.e., there exists an operad morphism $\cP\to \Tw \cP$ such that $\cP\to \Tw \cP\to \cP$ is the identity, then $\cP$ still acts on $A$.

Recall that the action of $\Grav_\infty$ on $\Tpoly[u]$ can be expressed as a map 

\begin{equation}\label{eq: Grav action on Tpoly}
\Grav_\infty \to \tcc(\Chains(\FM_2)) \to \tcc(\Gra) \to \End_{\Tpoly[u]}
\end{equation}
inducing a similar action on $\Omega(\mathcal T^{\text{formal}}_\text{poly}[u],M)$. 
Unfortunately, the functor $\tcc$ does not behave well with respect to operadic twisting. For instance, given a map $\mathfrak{s}^{-1}\Lie\to \op P$, there is no natural map $\mathfrak{s}^{-1}\Lie\to \tcc(\op P)$. On the other hand, as the following lemma shows, we can circumvent this issue by considering the functor $\nhc$ instead.

\begin{lemma} Let $\mu\colon(\mathfrak{s}^{-1}\Lie,0,0) \to (\op{P},d,\Delta)$ be a morphism in $\mxdops$. (So in particular $\mu(l_2)\in ker(\Delta)$.)  Then there is a morphism $\hat{\mu}\colon \mathfrak{s}^{-1}\Lie \to\nhc(\op{P})$ for which
\begin{equation*}
    \nhc(\mathsf{Tw}^\mu(\op{P}))\hookrightarrow \mathsf{Tw}^{\hat{\mu}}(\nhc(\op{P}))
\end{equation*}
\end{lemma}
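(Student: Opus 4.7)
The plan is to first construct $\hat\mu$ explicitly and then exhibit the desired inclusion as the canonical map arising from the failure of infinite products to commute with the tensor product $-\otimes k[u]$.

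First I would define $\hat\mu\colon \sLie \to \nhc(\op P)$ as the composition of $\mu$ with the inclusion $\op P\hookrightarrow \nhc(\op P)$ at $u^0$, i.e.\ $\hat\mu(l_2):=\mu(l_2)\otimes u^0$. This is a morphism of operads because the composition in $\nhc(\op P)$, being multiplicative in the $u$-degree, restricts to the operad structure of $\op P$ on the subspace $\op P\otimes u^0$. It is a chain map because the total differential $d+u\Delta$ on $\nhc(\op P)$ satisfies $(d+u\Delta)(\mu(l_2)\otimes u^0) = d\mu(l_2)\otimes u^0 + u\Delta\mu(l_2)\otimes u^0 = 0$, using that $\mu$ is a morphism of dg operads and that $\mu(l_2)\in\ker\Delta$ by hypothesis.

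Next I would identify both sides as $\mathbb{S}$-modules. Recall that $\Tw^\mu\op P(n)\cong \prod_{k\geq 0}\Sigma^{2k}\op P(n+k)^{S_k}$ (with the usual sign twist) and that the functor $\nhc$ tensors each arity with $k[u]$. Thus
\begin{equation*}
\nhc(\Tw^\mu \op P)(n)\;\cong\;\Bigl(\prod_{k\geq 0}\Sigma^{2k}\op P(n+k)^{S_k}\Bigr)\otimes k[u],
\end{equation*}
while
\begin{equation*}
\Tw^{\hat\mu}\nhc(\op P)(n)\;\cong\;\prod_{k\geq 0}\Sigma^{2k}\bigl(\op P(n+k)\otimes k[u]\bigr)^{S_k}.
\end{equation*}
The canonical $\mathbb{S}$-module map $\bigl(\prod_k A_k\bigr)\otimes k[u]\hookrightarrow \prod_k(A_k\otimes k[u])$ is injective but not surjective in general — it misses elements whose $u$-degree is unbounded in $k$ — and provides the claimed levelwise inclusion.

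Finally I would verify compatibility with differentials and operad compositions. The differential on $\nhc(\Tw^\mu\op P)$ is $(d+[\mu,-])+u\Delta$, while that on $\Tw^{\hat\mu}\nhc(\op P)$ is $(d+u\Delta)+[\hat\mu,-]$; these agree because the external bracket with $\hat\mu=\mu\otimes u^0$ in $\nhc(\op P)$ acts in each $u^r$-level as the external bracket with $\mu$ in $\op P$. Compatibility with composition follows from the fact that the twisted composition involves only insertions of $\mu$ into internal slots at $u^0$ followed by ordinary composition in $\op P$, and the $u$-degrees multiply on both sides as specified by $\nhc$. The main obstacle is the bookkeeping of $S_k$-invariants, arity shifts, and Koszul signs inherent to the twisting formalism; once the $\mathbb{S}$-module identifications above are fixed, the structural compatibility is essentially formal, relying only on the trivial observation that all twisting data lives at $u^0$ inside $\nhc(\op P)$.
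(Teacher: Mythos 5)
Your proposal is correct and follows essentially the same route as the paper: define $\hat\mu$ at $u^0$ (a dg map since $\mu(l_2)$ is killed by both $d$ and $\Delta$), realize the inclusion as the canonical map $(\prod_k A_k)\otimes k[u]\hookrightarrow \prod_k(A_k\otimes k[u])$ whose image consists of lists with uniformly bounded $u$-powers, and observe that the two total differentials $(d+d^{\mathsf{Tw}}_\mu)+u\Delta$ and $(d+u\Delta)+d^{\mathsf{Tw}}_{\hat\mu}$ are exchanged because bracketing with $\mu(l_2)\otimes u^0$ acts levelwise in $u$. The extra remarks on $S_k$-invariants and Koszul signs are consistent with the paper's (terser) argument.
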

\begin{proof}  The morphism $\hat{\mu}\colon \mathfrak{s}^{-1}\Lie \to\nhc(\op{P})$ is given by $f(-)\otimes u^0$, which is a dg map since $f$ lands in the kernel of $d$ and of $\Delta$.

Now on the level of graded vector spaces we can include
\begin{equation*}
    (\ds\prod_{r\geq 0} \op{P}(n+r))\tensor k[u]   \hookrightarrow \ds\prod_{r\geq 0}( \op{P}(n+r)\tensor k[u])
\end{equation*}
as the subset of lists whose powers of $u$ match.  Here we view $\mathsf{Tw}(\op{P})$ as having a mixed complex structure via the product over $r$ of $\Delta_{n+r}\colon \op{P}(n+r)\to\op{P}(n+r)$.  The differential on the left hand side is $(d_\op{P}+ d^\mathsf{Tw}_\mu)+u\Delta$.  The differential on the right hand side is $(d_\op{P}+u\Delta)+ d^\mathsf{Tw}_{\hat{\mu}}$.  So since the inclusion takes $\{\mu(l_2),-\}\tensor u^0$ to $\{\mu(l_2)\tensor u^0,-\}$ it turns $d^\mathsf{Tw}_\mu$ into $d^\mathsf{Tw}_{\hat{\mu}}$, whence the claim. 
\end{proof}

We can then reexpress the action \eqref{eq: Grav action on Tpoly} as
$$\Grav_\infty \to \tcc(\Chains(\FM_2)) \to \tcc(\Gra)\to \nhc(\Gra) \to \End_{\Tpoly[u]}.$$

If we factor the map $\tcc(\Chains(\FM_2))\to \nhc(\Gra)$ through the canonical projection $\Tw \nhc(\Gra)\to \nhc(\Gra)$, we will obtain a $\Grav_\infty$ structure on  $\Omega(\mathcal T^{\text{formal}}_\text{poly}[u],M)^\mu$ for every Maurer--Cartan element $\mu$ given by the following maps

$$\Grav_\infty \to \tcc(\Chains(\FM_2)) \to  \Tw \nhc(\Gra) \to \End_{\Omega(\mathcal T^{\text{formal}}_\text{poly}[u])^\mu}.$$

In fact, looking at diagram \eqref{diagram} using operadic bimodule twisting\footnote{c.f. \cite[Appendix]{Campos} regarding twisting of operadic bimodules.}, we see that the same argument can be used to twist the $\Grav_\infty$ morphism, as long as we can find a factorization of the following form:
\begin{equation}
\xymatrix{ 
	\mathsf M_\circlearrowright \ar[d] & \circlearrowright & \ar[d] \Chains(\mathbb H_{\bullet,0})  & \circlearrowleft &  \tcc(\Chains(\FM_2))  \ar[d] \\ 
		\Tw \mathsf M_\circlearrowright \ar[d] & \circlearrowright & \ar[d]\Tw \left(\prod_n \Sigma^n\vKGra(\bullet, n)^{\mathbb{Z}_{n+1}}\right)^\mu  & \circlearrowleft &  \Tw \nhc(\Gra)  \ar[d] \\ 
	\mathsf M_\circlearrowright  & \circlearrowright &   \left(\prod_n \Sigma^n\vKGra(\bullet, n)^{\mathbb{Z}_{n+1}}\right)^\mu  & \circlearrowleft &  \nhc(\Gra) }
\end{equation}

In fact, due to the ill-definedness of the linear part of the Maurer--Cartan element $B$ that we consider, we must in fact factor the morphism through a smaller bimodule which we construct in the next section.

\subsection{Twisting of Graphs}

The construction of this section is essentially a formal adaptation of the globalization section in \cite{Campos}, so we will only sketch it and refer to loc.\ cit.\ for the missing proofs. We first need the following proposition whose proof is immediate.

 \begin{proposition}\label{prop:CC- of CSC}
	If $\op{P}=(\op{P}^1,\op{P}^2)$ is an operad of CSC type  and if the operad $\op{P}^1$ is a rotational operad, then $\nhc(\cP)=(\nhc(\cP^1),\cP^2)$ is still an operad of CSC type, with compositions given by
	\begin{equation*}
	p_2 \  \tilde{\circ}_l \  p_1u^k=\begin{cases} p_2 \circ_l p_1 &\text{ if }k=0  \\ 0 & \text{ if } k>0, \end{cases}
	\end{equation*}
	for $p_i\in \op P^i$.
	Moreover, the map from corollary \ref{wecor} induces a morphism of CSC type operads $\tcc(\op P) \to \nhc(\op P)$.
\end{proposition}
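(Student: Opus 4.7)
My plan is to proceed in close parallel to the proof of Proposition \ref{prop:LCC of CSC operad}, exploiting the fact that $\nhc(\cP^1)$ contains $\cP^1$ as a dg sub-operad via the inclusion $a \mapsto a \otimes u^0$. Since the specified cross-color composition on $\nhc(\cP)$ annihilates any term with $u^k$ for $k>0$, only the $u^0$ component of $\nhc(\cP^1)$ ever interacts non-trivially with $\cP^2$. So the CSC axioms on $\nhc(\cP)$ should reduce immediately to the CSC axioms of $\cP$ itself, without the twisting by $\rho$ that appears in the $\tcc$ case.

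First I would verify the three CSC axioms in order. For associativity, any triple of compositions involving a $p_1 u^k$ with $k>0$ gives zero on both sides by the defining formula, while all other cases involving only elements at $u^0$ reduce tautologically to associativity in $\cP$. For compatibility with the differential $d + u\Delta$ on $\nhc(\cP^1)$, if $k>0$ both $d(p_2 \tilde\circ_l p_1 u^k)$ and its expansion via the Leibniz rule vanish, because every term either has $u$-degree $>0$ in the $\cP^1$ slot or applies $u\Delta$ raising the degree further; if $k=0$, the only surviving term is $dp_2 \circ_l p_1 \pm p_2 \circ_l dp_1 = d(p_2 \circ_l p_1)$, since the $u\Delta p_1$ contribution lands in $u^1$ and is killed. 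Finally, $\cP^1$-equivariance of the cyclic action follows directly from the equivariance in $\cP$, since on non-zero compositions the operation is literally $p_2 \circ_l p_1$.

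For the morphism $\tcc(\cP) \to \nhc(\cP)$, I take the map from Corollary \ref{wecor} on color $1$, explicitly $c_0 + c_1 v + \cdots + c_n v^n \mapsto \rho(c_0) \otimes u^0$, and the identity on color $2$. This is already a morphism of dg operads in color $1$ by Corollary \ref{wecor}, and equivariance on color $2$ is immediate. The only non-trivial check is compatibility with the cross-color composition: in $\tcc(\cP)$ the formula from Proposition \ref{prop:LCC of CSC operad} gives $p_2 \tilde\circ_l (p_1 v^k) = p_2 \circ_l \rho(p_1)$ if $k=0$ and $0$ otherwise; after applying the morphism this becomes $p_2 \tilde\circ_l (\rho(p_1) u^0) = p_2 \circ_l \rho(p_1)$ in $\nhc(\cP)$ for $k=0$ and $0$ otherwise, so the two agree.

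I do not expect a serious obstacle: the only potentially subtle point is keeping track of signs in the Leibniz check for the differential, but no new phenomena beyond those already handled in Proposition \ref{prop:LCC of CSC operad} appear, and in fact the verification is strictly simpler here because no twisted gluings $a \tilde\circ_i b = a \circ_i \rho(b)$ intervene.
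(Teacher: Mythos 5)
Your verification is correct and is exactly the routine check the paper has in mind: it omits the proof entirely ("whose proof is immediate"), and your argument is the evident direct analogue of the proof of Proposition \ref{prop:LCC of CSC operad}, simplified by the absence of the $\rho$-twist, together with the observation that the composite map of Corollary \ref{wecor} is the identity on color $2$ and $c_0+c_1v+\dots\mapsto \rho(c_0)\otimes u^0$ on color $1$. No gaps; this matches the intended (unwritten) proof.
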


The $\mathsf{M}_\circlearrowright-\nhc(\Gra)$-bimodule $\left(\prod_n \Sigma^n\vKGra(\bullet, n)^{\mathbb{Z}_{n+1}}\right)^\mu$ constructed in section \ref{sec:repon col_op} (together with Proposition \ref{prop:CC- of CSC}) can be twisted to obtain the $\Tw \mathsf{M}_\circlearrowright-\Tw\nhc(\Gra)$-bimodule $\Tw \left(\prod_n \Sigma^n\vKGra(\bullet, n)^{\mathbb{Z}_{n+1}}\right)^\mu$. Notice that $\mathsf{M}_\circlearrowright$ arises itself from operadic twisting and we can therefore restrict the left action of $\Tw \mathsf{M}_\circlearrowright$ to $\mathsf{M}_\circlearrowright$ using the map $\mathsf{M}_\circlearrowright \to \Tw \mathsf{M}_\circlearrowright$.

Recall from section \ref{sec: formality of little discs} the operad $\Graphs$, defined as the suboperad of $\Tw \Gra$ spanned by graphs such that all internal vertices have $\geq 3$ valence and every connected component contains at least an external vertex.

We can restrict the bimodule right action to $\tcc(\Graphs)$ via the chain of inclusions
$ \nhc(\Graphs) \subset \nhc(\Tw \Gra) \subset \Tw \nhc(\Gra).$

\begin{propdef}
    The $\mathsf{M}_\circlearrowright-\nhc(\Graphs)$ bimodule $\Tw \left(\prod_n \Sigma^n\vKGra(\bullet, n)^{\mathbb{Z}_{n+1}}\right)^\mu $ has a sub-quotient denoted by $\vKGraphs^{\sigma}$ constructed in the following way: 
    
    We first consider the quotient $Q$ of $\Tw \left(\prod_n \Sigma^n\vKGra(\bullet, n)^{\mathbb{Z}_{n+1}}\right)^\mu$ by the subspace consisting of graphs with tadpoles or powers of $v$ on type I internal vertices and then the subspace of $Q$ spanned by the graphs with the following properties:
    \begin{enumerate}
\item There is at least one type I external vertex,
\item There are no 0-valent type I internal vertices
\item There are no 1-valent type I internal vertices with an outgoing edge,
\item There are no 2-valent type I internal vertices with one incoming and one outgoing edge.
\end{enumerate}
\end{propdef}
\begin{proof}

This result is essentially \cite[Def./Prop. 24]{Campos}, where it was done for $\BVKGraphs$, since $\BVKGra$ can be interpreted as the quotient of $\vKGra$ by graphs containing non-zero powers of $v$. We sketch the proof pointing out the adaptations to our case.

 The right $\nhc(\Graphs)$ action cannot destroy tadpoles on internal vertices hence it descends to $Q$. $\vKGraphs^\sigma$ is clearly stable by the right action. 
 
 To verify the stability by the left action and by the differential one uses two properties of the Maurer--Cartan element $m$ (the image of the generators of $\sLie_\infty^{\text{bimod}}$) by which we twist:

\begin{enumerate}[(a)]
\item The only graph in $m$ containing a $1$-valent type I internal vertex is the 2 vertex graph \begin{tikzpicture}[scale=1]

\node (i) at (1,0.5) [int] {};
\node (j) at (1,0) [int] {};

\draw [->] (i)--(j);
\draw  (0.5,0)--(1.5,0);
\end{tikzpicture}, with coefficient $1$.
\item There are no graphs with vertices like the ones in property \textit{(4)}.
\end{enumerate}

The proof of these properties is the same as for the original Kontsevich vanishing lemmas. 
 
Using these properties it is a straightforward (but lengthy) combinatorial verification that non-cyclic invariant graphs $\vKGraphs \supset \vKGraphs^\sigma$ are preserved by the left $\mathsf M$ action. It follows that the cyclic invariant $\vKGraphs^\sigma$ are preserved by the $\mathsf M_\circlearrowleft$.
 
 Similarly, one can check that $\vKGraphs$ are stable by the differential and to see that the cyclic invariant $\vKGraphs^\sigma$ are preserved by the differential it is enough to notice that the image of the generators of $\sLie_\infty^{\text{bimod}} $ is cyclic invariant itself.
\end{proof}

\subsection{Factorization of the bimodule morphism}

To conclude the globalization procedure it is enough to construct the first bimodule morphism of the following diagram:
\begin{equation}
\xymatrix{ 
	\mathsf M_\circlearrowright \ar[d]^{\text{id}} & \circlearrowright & \ar[d]^{f} \Chains(\mathbb H_{\bullet,0})  & \circlearrowleft &  \tcc(\Chains(\FM_2))  \ar[d]^{g} \\ 
	\mathsf M_\circlearrowright \ar[d] & \circlearrowright & \ar[d] \vKGraphs^\sigma  & \circlearrowleft &  \nhc(\Graphs)  \ar[d] \\ 
		\End_{\Omega((\mathcal D^{\text{formal}}_\text{poly})^\sigma, M)^B} & \circlearrowright & \End^{\Omega(\mathcal T^{\text{formal}}_\text{poly}[u], M)^B}_{\Omega((\mathcal D^{\text{formal}}_\text{poly})^\sigma, M)^B}& \circlearrowleft &  \End_{\Omega(\mathcal T^{\text{formal}}_\text{poly}[u], M)^B} }
\end{equation}

The map $g$ is defined to be the composition
$$\tcc(\Chains(\FM_2))\to \nhc (\Chains(\FM_2)) \stackrel{\nhc(\pi^{-1})}{\to} \nhc (\Tw \Chains(\FM_2))\to \nhc(\Tw \Gra).$$

Here we consider the maps
$$\pi_n^{-1} = \prod_k \pi_{n,k}^{-1}\colon \Chains(\mathsf{FM}_2(n)) \to \Tw \Chains(\FM_2(n)) = \prod_k \Sigma^{2k} \Chains(\FM_2(n+k))^{\mathbb S_k},$$ obtained as the strongly continuous chain associated to the SA bundle corresponding to the map $\pi_{n,k}\colon \FM_2(n+k) \to \FM_2(n)$ that forgets the last $k$ points. Informally, the map $\pi_{n,k}^{-1}$ is obtained by creating $k$ points that move freely. 

The maps $\pi_n^{-1}$ are clearly compatible with the cyclic action and therefore induce the desired 
$$ \nhc(\Chains(\FM_2)) \to \nhc(\Tw \Chains(\FM_2)).$$ 

Notice that fact that the composition $\Chains(\FM_2)\to \Tw \Chains(\FM_2) \to \Tw \Gra $ actually lands inside $\Graphs$ uses Kontsevich's vanishing lemmas \cite{Kontsevich}.\\

The map $f$ is given by the composition: $\Chains(\mathbb H_{m,0}) \stackrel{\pi^{-1}}{\to}\prod_{k} \Sigma^{2k}\Chains(\mathbb H_{m+k,0}) \to$
\begin{equation*}
\left(\prod_{n,k\geq 0} \Sigma^{n+2k}\Chains(\mathbb H_{m+k,n})^{\mathbb Z_{n+1}}\right)^\mu\to \Tw \left(\prod_n \Sigma^n\vKGra(m,n)^{\mathbb Z_{n+1}}\right)^\mu.
\end{equation*}

Here, $\pi^{-1}$ is defined, as above, as the strongly continuous chain associated to the projection $\mathbb H_{m+k,n} \to \mathbb H_{m,n}$.

To finish the globalization argument, one needs to check the following two properties:

\begin{tabular}{l}
     \textit{(i)} $f$ is a map of bimodules, \\
     \textit{(ii)}  $f$ lands in $\vKGraphs^\sigma(m)$ seen as a subquotient of $\Tw \left(\prod_n \Sigma^n\vKGra(m,n)^{\mathbb Z_{n+1}}\right)^\mu$.\\
\end{tabular}

\subsubsection{Proof of (i)}

We start by noticing that the compatibility with the left $\mathsf{M}_\circlearrowleft$ is immediate. As for the right action, notice that $f$ as a right module map can be decomposed as 
\begin{equation}
\xymatrix{ 
\ar[d] \Chains(\mathbb H_{\bullet,0})  & \circlearrowleft &  \tcc(\Chains(\FM_2))  \ar[d]^{g'} \\ 
\ar[d]  \left(\prod_{n,k\geq 0} \Sigma^{n+2k}\Chains(\mathbb H_{\bullet+k,n})^{\mathbb Z_{n+1}}\right)^\mu   & \circlearrowleft &  \nhc(\Tw \Chains(\FM_2))  \ar[d] \\  
 \Tw \left(\prod_n \Sigma^n\vKGra(m,n)^{\mathbb Z_{n+1}}\right)^\mu  & \circlearrowleft &  \nhc(\Graphs).  \\ 
 }
\end{equation}
The upper map is easily checked to be a morphism of right modules. However, due to remark \ref{rem: not CSC map} the  bottom map is not a morphism of right modules. However, it is so if we restrict it to the image of $g'$, essentially by Proposition \ref{prop: need CCtheta}. This guarantees that $f$ itself is a morphism of right modules.

The compatibility of $f$ with the differential follows from the same arguments as the functoriality of bimodule twisting.

\subsubsection{Proof of (ii)} 

One has to show that every graph not satisfying at least one of properties \textit{(1), (2), (3)} or \textit{(4)} appears in the image of $f$ with coefficient zero. This is clear for the first property. 

As for property \textit{(2)}, if a graph contains an isolated type I internal vertex, its coefficient will involve the integration of a $0$-form over a two dimensional space, which is zero.

Similarly, if a graph contains a $1$-valent internal vertex, its coefficient will involve an integral of a $1$-form over a two dimensional space and is therefore $0$.

Finally, if a graph has an internal vertex $i$ connected to vertices $a$ and $b$ as in property \textit{(4)}, in the computation of its coefficient we find the factor 

$$\int_{X_{z_a,z_b}} d\phi^{ai}d\phi^{ib}  $$ 
 where $X_{z_a,z_b}$ is the space of configurations in which the points labeled by $a$ and $b$ are in positions $z_a$ and $z_b$, and the point labeled by $i$ moves freely. Here the notation assumes that both $a$ and $b$ are type I vertices but the argument also holds if they are type II vertices.
 
By Stokes' theorem for SA bundles, we have
 
$$d \underbrace{\int_{Y_{z_a,z_b}} d\phi^{ai}d\phi^{ij}d\phi^{jb}}_{0} = \int_{Y_{z_a,z_b}} \underbrace{d(d\phi^{ai}d\phi^{ij}d\phi^{jb})}_{0} \pm \int_{\partial Y_{z_a,z_b}} d\phi^{ai}d\phi^{ij}d\phi^{jb},$$ 
 where $Y_{z_a,z_b}$ is the configuration space of four points ($i,j,a$ and $b$) where $a$ and $b$ are fixed at $z_a$ and $z_b$ and the points labeled by $i$ and $j$ are free. The integral on the left hand side vanishes by degree reasons. The boundary terms on the right hand side vanish except on the following cases:
 
 \begin{itemize}
 \item The boundary stratum in which $a$ and $i$ are infinitely close,
 
 \item The boundary stratum in which $i$ and $j$ are infinitely close,
 
 \item The boundary stratum in which $j$ and $b$ are infinitely close.
 \end{itemize}
In each of these cases, the result is an integral of the form $\int_{X_{z_a,z_b}} d\phi^{ai}d\phi^{ib}  $, therefore it is zero.

%\appendix

\bibliography{Wgravbib}
\bibliographystyle{alpha}

\end{document}